\documentclass[a4paper, 11pt]{amsart}

\usepackage{amsmath, amsthm, amsfonts, amssymb}
\usepackage{enumerate}
\usepackage[bookmarks=false]{hyperref}
\usepackage{tikz-cd}
\usepackage{url}
\usepackage{xcolor}
\usepackage{verbatim}
\usepackage[normalem]{ulem}
\usepackage{mathtools}
\usepackage[
    backend=biber,
    style=alphabetic,
    sorting=nyt,
    maxbibnames=99
]{biblatex}

\title{Non-isomorphism of $q$-Gaussian von Neumann Algebras in Infinite Variables}

\author{Changying Ding}
\address{Department of Mathematics, University of California, Los Angeles, Los Angeles, CA 90095, USA}
\email{cding@math.ucla.edu}

\author{Zhiyuan Yang}
\address{Department of Mathematics, Purdue University, West Lafayette, IN 47907, USA}
\email{yang3261@purdue.edu}

\date{}

\newtheorem{theorem}{Theorem}[section]
\newtheorem{proposition}[theorem]{Proposition}
\newtheorem{lemma}[theorem]{Lemma}
\newtheorem{corollary}[theorem]{Corollary}

\newtheorem*{maintheorem}{Main Theorem}
\theoremstyle{definition}
\newtheorem{definition}[theorem]{Definition}
\newtheorem{remark}[theorem]{Remark}
\newtheorem{notation}[theorem]{Notation}
\theoremstyle{plain}

\numberwithin{equation}{section}

\newcommand{\bbR}{\mathbb{R}}
\newcommand{\bbX}{\mathbb{X}}

\newcommand{\bbC}{\mathbb{C}}

\newcommand{\C}{\mathcal{C}}
\newcommand{\F}{\mathcal F_q(H)}

\newcommand{\Ccal}{\mathcal C_q}
\newcommand{\Xcal}{\mathcal X_q}
\newcommand{\K}{\mathbb {K}}

\newcommand{\B}{\mathbb B}
\newcommand{\dist}{\operatorname{dist}}
\newcommand{\inv}{\operatorname{inv}}

\newcommand{\norm}[1]{\left\|#1\right\|}

\begin{document}

\begin{abstract}
We show that the \(q\)-Gaussian von Neumann algebras associated with an
infinite-dimensional real Hilbert space are pairwise nonisomorphic as
\(q\) ranges over \((-1,1)\).
\end{abstract}

\maketitle

\section{Introduction}

The $q$-Gaussian von Neumann algebras, introduced by Bo\.zejko and
Speicher \cite{BoSp91}\cite{BoSp94} (see also \cite{BKS97}), form a one-parameter
deformation of Voiculescu's free Gaussian construction \cite{VDN92}.
Given a real Hilbert space $H_{\bbR}$ and $q\in(-1,1)$, the
$q$-Gaussian construction produces a von Neumann algebra
$M_q(H_{\bbR})$. 
At $q=0$, this is the free group factor with
$\dim H_{\bbR}$ generators. 
For general $q$, Bo\.zejko, K{\"u}mmerer,
and Speicher \cite{BKS97} proved that $M_q(H_{\bbR})$ is a
non-injective $\mathrm{II}_1$ factor when $H_\bbR$ is infinite dimensional,
while
Ricard \cite{Ric05} and Nou \cite{Nou06} later showed that factoriality
and non-injectivity, respectively, hold whenever
$\dim H_{\bbR}\geq2$.

Understanding the dependence of the
isomorphism class of $M_q(H_{\bbR})$ on $q$ has been a longstanding
problem; see \cite[Section~1.4]{GuSh14},
\cite[Question~1.1]{NeZe18}, and \cite[Remark~4.7]{BCKW22}. When $H_{\bbR}$ is finite-dimensional,
Guionnet and Shlyakhtenko \cite{GuSh14}, using free monotone transport
together with conjugate-variable estimates of Dabrowski \cite{Dab14},
proved that $M_q(H_{\bbR})$ is isomorphic to the corresponding free
group factor whenever $|q|$ is sufficiently small, with the required
smallness depending on $\dim H_{\bbR}$.
More recently, Miyagawa and
Speicher \cite{MiSp23} established the existence of Lipschitz conjugate
systems for all $q\in(-1,1)$, but the finite-dimensional isomorphism
problem remains open beyond the small-$|q|$ regime.

The situation in infinite dimension is rather different. Building on
the $C^*$-algebraic result of Borst, Caspers, Klisse, and Wasilewski
\cite{BCKW22} and an idea of Ozawa \cite{Oza10}, Caspers \cite{Cas23}
proved that $M_q(H_{\bbR})$ is not isomorphic to $M_0(H_{\bbR})$
whenever $q\neq0$ and $H_{\bbR}$ is infinite-dimensional. His proof uses a
$\mathrm{W}^*$-version of Ozawa's condition (AO) \cite{Oza04}, which can be
understood through the canonical map
\[
\pi_q:C^*\bigl(M_q(H_{\bbR}),JM_q(H_{\bbR})J\bigr)
\longrightarrow
M_q(H_{\bbR})\otimes_{\min}JM_q(H_{\bbR})J.
\]
For $q=0$, the kernel of this map is contained in the space of
operators compact in the sense of Ozawa \cite{Oza10}, whereas for
$q\neq0$ it is strictly larger.

The preceding obstruction records only whether the kernel extends
beyond the compact, which separates $q=0$ from $q\neq0$ but cannot
distinguish between two nonzero values of $q$. We show that finer
structures of the kernels nevertheless determine the deformation
parameter completely.

\hypertarget{thm:main}{}
\begin{maintheorem}
Let $H_{\bbR}$ be an infinite-dimensional real Hilbert space and
$q,r\in(-1,1)$.
Then $M_q(H_{\bbR})$ is isomorphic to $M_r(H_{\bbR})$ if and only if
$q=r$.
\end{maintheorem}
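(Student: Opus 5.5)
The plan is to extract from the kernel of $\pi_q$ an invariant, computable from the isomorphism class of $M=M_q(H_{\bbR})$, whose value recovers $q$. Since any $*$-isomorphism $\theta\colon M_q(H_{\bbR})\to M_r(H_{\bbR})$ is implemented by a unitary $U$ between the standard forms $L^2(M_q)\cong\F$ and $L^2(M_r)$, with $UJ_qU^*=J_r$, conjugation by $U$ carries $C^*(M_q,J_qM_qJ_q)$ onto $C^*(M_r,J_rM_rJ_r)$. It intertwines $\pi_q$ with $\pi_r$, and also carries the Ozawa-compact operators (defined intrinsically from $M$ and $M'$) to each other. So the quotient
\[
\mathcal Q_q=\ker\pi_q\big/\bigl(\ker\pi_q\cap\K(M_q)\bigr),
\]
together with the bimodule structure coming from $M_q\otimes_{\rm alg}JM_qJ$, is an isomorphism invariant, where $\K(M_q)$ denotes Ozawa's compact ideal.

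The first concrete step is to identify explicit elements of $\ker\pi_q$. Following Caspers and \cite{BCKW22}, for an orthonormal sequence $(e_i)$ in $H_{\bbR}$ we look at the $q$-Gaussians $W(e_i)$ and their right counterparts $JW(e_i)J$. The commutation relations of creation and annihilation operators on $\F$ give, modulo Ozawa-compacts, the asymptotic relation
\[
\sum_{i\le n}\bigl(\text{products of } W(e_i),\,JW(e_i)J\bigr)\approx q\cdot(\text{a number-operator-type term}).
\]
I would isolate a single canonical operator $T=\lim_n\frac1n\sum_{i\le n}W(e_i)JW(e_i)J$, or a limit of that kind in an ultrapower or strong sense. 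Its image in $M\otimes_{\min}JMJ$ vanishes by freeness/mixing across distinct $e_i$, but on $L^2$ it acts on the $k$-particle space $H^{\otimes k}$ roughly as $q^{k}$ times a projection (the "contraction at both ends" term). In this way $\ker\pi_q$ contains, modulo compacts, an element whose spectrum, or whose moments in a suitable state, encode $\{q^k\}_k$.

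The second step is to make the spectral data intrinsic. I would define the invariant as the set of all $\lambda$ such that $\lambda$ lies in the essential spectrum, modulo $\K(M)$, of some element of $\ker\pi$ of a prescribed form. The form should be characterized only through $M$ and $M'$, for example via central sequences of Gaussians or via the asymptotically free "symmetric" families used in Caspers' argument. Because $H_{\bbR}$ is infinite-dimensional, automorphisms such as those induced by orthogonal transformations can move such families around freely. What must be shown is that the invariant is well defined, independent of the chosen sequence, and equal to something like $\{q^k:k\ge1\}\cup\{0\}$ or its closure. Since $q\mapsto\{q^k\}$ is injective on $(-1,1)$ (for $q=0$ one gets $\{0\}$, matching Caspers), this finishes the proof.

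The main obstacle is this intrinsic characterization. An arbitrary isomorphism $\theta$ need not map $q$-Gaussians to anything Gaussian, so the invariant must be defined by a property preserved under $\theta$. The property should refer only to $\ker\pi$, $\K(M)$, and the $M$-$M'$-bimodule structure, not to generators. My first attempt would be to use the quotient $C^*$-algebra $\mathcal Q_q$ itself. I would compute it as generated by $M\otimes JMJ$ acting on a "boundary" Hilbert module, show it is isomorphic to a Toeplitz-like algebra on which the pair $(x,JyJ)$ acts by $\tau$-twisted multiplication with twist operator of norm/spectrum $\{q^k\}$, and then read off $|q|$ from norms and the sign of $q$ from the sign of the spectrum. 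Concretely, I would compute $\|\,\cdot\,\|_{\mathcal Q}$ of $\sum_i x_iJy_iJ$ in terms of $\tau$ and $q$, for example obtaining $|q|\,\|\sum x_i\otimes \bar y_i\|$ on the trace-zero part, and use that formula as the invariant.
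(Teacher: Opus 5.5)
Your setup is sound: conjugation by the standard unitary $W_\theta$ carries $\ker\pi_q$ and its Ozawa-compact part onto their $r$-counterparts. Your averaged operator $\lim_N\frac1N\sum_{i\le N}s(e_i)Js(e_i)J$ is exactly $Q=\sum_nq^nP_n$ (Lemma~\ref{lem:length-averages}), which lies in $\ker\pi_q$ with essential spectrum $\{q^k:k\geq1\}\cup\{0\}$. But the proposal has a genuine gap: no isomorphism invariant is ever defined, and you acknowledge this step as the main obstacle.

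\textbf{Why the suggested invariants fail.} Without the unspecified ``prescribed form,'' essential spectra of elements of $\ker\pi_q$ carry no information. For instance, $\lambda P_1\in\ker\pi_q$ has essential spectrum $\{0,\lambda\}$ for every $\lambda\in\bbC$. Any form specified through Gaussian families, as $Q$ is, is not respected by an arbitrary $\theta$, which is the whole difficulty. Orthogonal second quantizations do not help, since $\theta$ need not be of that kind. The fallback via $\mathcal Q_q$ fails as well:
\begin{itemize}
\item an element $\sum_ix_iJy_iJ$ is in general not in $\ker\pi_q$, so its norm in $\mathcal Q_q$ is not defined;
\item the formula $|q|\,\|\sum_i x_i\otimes\bar y_i\|$ has no justification.
\end{itemize}
The structure one can actually extract from the kernel modulo compacts has the same shape for every $q\neq0$. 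It is a filtration $\K(\F)=J_0\subset J_1\subset\cdots$ with $J_n/J_{n-1}\cong\K(E_n)$, with $J_1/\K(\F)$ the least nonzero ideal of $X/\K(\F)$. So $q$ can only be recovered by coupling this structure with the multiplication of $M$.

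\textbf{What is missing.} The paper's proof needs the following ingredients, none of which appear in your outline.
\begin{enumerate}
\item An exact computation $\ker\pi_q=\mathcal X_q$, via biexactness relative to $\bbX_q$, so that you know what the kernel is.
\item The minimality of $P_1+\mathcal K$ in $\mathcal X/\mathcal K$ together with the least-ideal property of $J_1/\K(\F)$. These force $e=W_\theta P_1^{(q)}W_\theta^*$ to be a $J$-invariant, $M_r$-regular projection in $J_1^{(r)}$ that is minimal modulo compacts. It need not equal $P_1^{(r)}$.
\item An invariant of such a projection that uses the product in $M$: the map $Q_e:x\Omega\otimes y\Omega\mapsto(xy-\tau(xy))\Omega$ on $e\F\otimes e\F$. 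Transport from $P_1^{(q)}$ gives $Q_e^*Q_e=I+qF_e$.
\item A bound valid for \emph{every} such projection on the $r$-side, namely $r_e\in r[-r^2,1]$. It is obtained by approximating $e$ modulo compacts by $V_\alpha V_\alpha^*$, transferring a pair of fresh directions into $e\F$, and applying the ordered swap estimate of Proposition~\ref{prop:ordered-swap-estimate}.
\end{enumerate}

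The paper never computes the invariant exactly. It only gets $q\in r[-r^2,1]$ and, symmetrically, $r\in q[-q^2,1]$, which together force $q=r$. The last ingredient is what controls the non-canonical image of $P_1$, and nothing in your outline plays its role.
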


The starting point of our argument is the following simple observation.
Let
$H=H_{\bbR}\oplus iH_{\bbR}$, let $\mathcal F_q(H)$ be the $q$-Fock
space, and denote by $P_n$ the projection onto $H_q^{\otimes n}$. The
projection $P_1$ already records the parameter $q$. Indeed, 
after removing the vacuum component,
multiplication on the range of $P_1$ gives the map
\[
Q_{P_1}:P_1\mathcal F_q(H)\otimes P_1\mathcal F_q(H)
=H\otimes H\ni h\otimes g
\longmapsto h\otimes g\in H_q^{\otimes2}\subset\mathcal F_q(H),
\]
which satisfies $Q_{P_1}^*Q_{P_1}=\mathrm{id}+qF$, where
$F:H\otimes H\ni h\otimes g\longmapsto g\otimes h$ is the flip map.
However, an isomorphism between $q$-Gaussian algebras need not preserve
$P_1$.

It turns out that the kernel of the preceding canonical map $\pi_q$, 
which we call the Akemann–Ostrand kernel, 
is preserved under isomorphisms and is generated by the projections
$P_n$ (Section~\ref{sec:akemann-ostrand-kernel}). 
Its $C^*$-analogue has a finer ideal structure:
the ideal generated by $P_1$ is the least 
noncompact ideal (Section~\ref{subsec:ideal-filtration}).
Consequently, although an isomorphism
$\theta:M_q(H_{\bbR})\to M_r(H_{\bbR})$ need not send $P_1^{(q)}$ to
$P_1^{(r)}$, it does send $P_1^{(q)}$ to a projection $e$ in the ideal
generated by $P_1^{(r)}$.

In fact, this containment retains enough information to recover $q$. The
definition of $Q_{P_1}$ extends to the class of ``first chaos projections'' in
the ideal generated by $P_1$ (Definition~\ref{def:first-chaos-projection}),
which, on the $r$-Gaussian side, includes $e$. Since $\theta$ transports the
identity for $Q_{P_1^{(q)}}$, the resulting map $Q_e$ also satisfies
$Q_e^*Q_e=\mathrm{id}+qF_e$. On the other hand, our estimate
(Proposition~\ref{prop:first-chaos-parameter} and
Appendix~\ref{sec:ordered-swap-estimate}) shows that every first chaos
projection $f$ in the ideal generated by $P_1^{(r)}$ must satisfy
$Q_f^*Q_f=\mathrm{id}+tF_f$ for some $t\in r[-r^2,1]$. Hence
$q\in r[-r^2,1]$, and by symmetry, $r\in q[-q^2,1]$, from which we conclude $q=r$.

\subsection*{Acknowledgments}
We thank Martijn Caspers, Akihiro Miyagawa, Dima Shlyakhtenko, and Thomas Sinclair for their helpful feedback.
C.D.\ is supported in part by NSF grant DMS-2554483.

\subsection*{AI use}

ChatGPT 5.5 and 5.6 Sol were used for editorial assistance and as interactive aids in exploring proof arguments.
In particular, AI-assisted
discussions contributed to the argument for Theorem~\ref{thm:length-filtration-main}(3) and to the idea of
using the operators \(V_\alpha\) to relate a general minimal projection to
\(P_1\) in the proof of the main theorem. ChatGPT 5.6 Sol was also used to
locate \cite{MS03} for the estimate in the appendix. Apart from these contributions, the mathematical ideas, framework, and constructions are due to the authors. 


\section{Preliminaries and Notations}

\subsection{$q$-Gaussian algebras}
\label{subsec:q-gaussian-algebras} Let $H_{\bbR}$ be a real Hilbert space, and let
$H=H_{\bbR}\oplus iH_{\bbR}$ be its complexification. We denote by
$J$ the conjugation on $H$, given by $J(\xi+i\eta)=\xi-i\eta$ for all
$\xi,\eta\in H_{\bbR}$. For $q\in(-1,1)$, the $q$-Fock space is defined as
$\F=\bbC\Omega\oplus\bigoplus_{n=1}^{\infty}H_q^{\otimes n}$, where $H_q^{\otimes n}$ denotes the completion of
$H^{\otimes_{\mathrm{alg}}n}$ with respect to the $q$-inner product
\[
 \left\langle \xi_1\otimes\cdots\otimes\xi_n,
 \eta_1\otimes\cdots\otimes\eta_n\right\rangle_q
 :=\sum_{\sigma\in S_n}q^{\inv(\sigma)}
 \prod_{i=1}^n\left\langle\xi_i,\eta_{\sigma(i)}\right\rangle,
\]
where $\inv(\sigma)$ is the inversion number of $\sigma$.
For $n\geq0$, we will denote by $P_n\in\B(\F)$ the orthogonal projection onto
$H_q^{\otimes n}$, where $H_q^{\otimes0}=\bbC\Omega$. The conjugation
$J$ extends to $\F$ by
$J(\xi_1\otimes\cdots\otimes\xi_n)
=J\xi_n\otimes\cdots\otimes J\xi_1$.

We say that $\xi\in\F$ has length $n$, and write $|\xi|=n$, if
$\xi\in H_q^{\otimes n}$. And, we call $\xi$ an elementary tensor if
$\xi\in\bbC\Omega$ or $\xi=h_1\otimes\cdots\otimes h_n$ for some
$h_1,\ldots,h_n\in H$.

For $h\in H$, the left creation operator $\ell(h)$ is defined by
\[
 \ell(h)\Omega=h,
 \qquad
 \ell(h)(h_1\otimes\cdots\otimes h_n)
 =h\otimes h_1\otimes\cdots\otimes h_n.
\]
It extends to a bounded operator on $\F$, and its adjoint
$\ell(h)^\ast$ is called the left annihilation operator. The conjugation $J$
turns left creation into right creation: put $r(h)=J\ell(Jh)J$, so that
\[
 r(h)\Omega=h,
 \qquad
 r(h)(h_1\otimes\cdots\otimes h_n)
 =h_1\otimes\cdots\otimes h_n\otimes h.
\]
Also, we denote the field operator by $s(h):=\ell(h)+\ell(Jh)^\ast$ for $h\in H$. For $h\in H_{\bbR}$, one has $Jh=h$, and hence
$s(h)=\ell(h)+\ell(h)^\ast$ is self-adjoint. Recall that we have the norm estimates
\[
 \norm{\ell(h)}=\max\{1,(1-q)^{-1/2}\}\norm{h}
 \quad(h\in H),
 \qquad
 \norm{s(h)}=\frac{2}{\sqrt{1-q}}\norm{h}
 \quad(h\in H_{\bbR}),
\]
see \cite[Lemma~4]{BoSp91} and
\cite[Theorem~1.10(1)]{BKS97}.

The $q$-Gaussian $C^\ast$-algebra is defined as
$A_q(H_{\bbR})=C^\ast(s(h):h\in H_{\bbR})$
, and the $q$-Gaussian von Neumann algebra is defined as
$M_q(H_{\bbR})=A_q(H_{\bbR})''$. The vacuum vector $\Omega$ defines
a faithful normal tracial state
\[
 \tau_q(x)=\langle x\Omega,\Omega\rangle
 \qquad (x\in M_q(H_{\bbR})).
\]
In particular, $\Omega$ is cyclic and separating, and the map
$x\mapsto x\Omega$ identifies
$L^2(M_q(H_{\bbR}),\tau_q)$ with $\mathcal F_q(H)$; under this
identification, the modular conjugation is precisely the
conjugation $J$ defined above. See
\cite[Proposition~2.3]{BKS97}.

For an elementary tensor
\(\xi=h_1\otimes\cdots\otimes h_k\), we will also denote
\(\ell(\xi)=\ell(h_1)\cdots\ell(h_k)\) and
\(r(\xi)=r(h_k)\cdots r(h_1)=J\ell(J\xi)J\), and extend these
definitions linearly to \(H^{\otimes_{\mathrm{alg}}k}\). For every
\(\xi\in H^{\otimes_{\mathrm{alg}}n}\), the Wick operator \(W(\xi)\) is the unique operator in $A_q(H_{\bbR})$ such that \(W(\xi)\Omega=\xi\). The resulting
Wick map extends continuously from \(H^{\otimes_{\mathrm{alg}}n}\) to
\(H_q^{\otimes n}\); for \(\xi\in H_q^{\otimes n}\), we continue to
write \(W(\xi)\in A_q(H_{\bbR})\) for its value. We also put
\(W_R(\xi)=JW(J\xi)J\), so that \(W_R(\xi)\Omega=\xi\). See
\cite[Definition~2.5 and Proposition~2.7]{BKS97}.

Put \(Q=\sum_{n\geq0}q^nP_n\). For \(f,g\in H\), the following commutation
relations are standard, and will be used repeatedly throughout the paper:
\begin{align*}
 \ell(f)^*\ell(g)
 &=\langle f,g\rangle 1+q\ell(g)\ell(f)^*,
 &
 r(f)^*r(g)
 &=\langle f,g\rangle 1+qr(g)r(f)^*,
 \\
 \ell(f)^*r(g)
 &=r(g)\ell(f)^*+\langle f,g\rangle Q,
 &
 r(f)^*\ell(g)
 &=\ell(g)r(f)^*+\langle f,g\rangle Q,
 \\
 Q\ell(f)&=q\ell(f)Q,
 &
 Qr(f)&=qr(f)Q.
\end{align*}
For \(n\geq0\), with \(P_j=0\) for \(j<0\),
\begin{equation*}
 P_n\ell(f)=\ell(f)P_{n-1},
 \qquad
 P_nr(f)=r(f)P_{n-1}.
\end{equation*}

We next record some elementary identities about norm limit. See the proof of \cite[Theorem 2.1]{BKS97} for the weak limit version of similar identities.

\begin{lemma}\label{lem:length-averages}
Let \(H_{\bbR}\) be an infinite-dimensional real Hilbert space
and \(q\in(-1,1)\setminus\{0\}\). Choose an orthonormal family
\((h_i)_{i\geq1}\) in \(H_{\bbR}\), and put
\(\ell_i=\ell(h_i)\), \(r_i=r(h_i)\), and
\(s_i=\ell_i+\ell_i^\ast\). Set
\(C=C^\ast(A_q(H_{\bbR}),JA_q(H_{\bbR})J)\). Then the following hold:
\begin{enumerate}
 \item\label{item:length-averages-projections}
 $\lim_{N\to \infty}\tfrac{1}{N}\sum_{i=1}^Ns_iJs_iJ = Q$ in norm. In particular, \(P_n\in C\) for every \(n\geq0\).
 \item\label{item:length-averages-convergence}
 For every \(n\geq1\),
 \(\lim_{N\to\infty}\frac1N\sum_{i=1}^Ns_iP_nJs_iJ
 =q^{n-1}P_{n-1}\) in norm.
\end{enumerate}
\end{lemma}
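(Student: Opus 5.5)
Throughout, $h_i\in H_{\bbR}$, so $Jh_i=h_i$ and $Js_iJ=r_i+r_i^*$. The plan is to expand the products algebraically with the commutation relations of Section~\ref{subsec:q-gaussian-algebras}. In each expansion exactly one term survives the averaging. The other terms will be shown to have norm $o(N)$ when summed over $i\le N$, using a row--column Cauchy--Schwarz estimate.

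\emph{Step 1 (expansion for (1)).} Write
\[
s_iJs_iJ=\ell_ir_i+\ell_ir_i^*+\ell_i^*r_i+\ell_i^*r_i^*.
\]
By the relation $\ell(f)^*r(g)=r(g)\ell(f)^*+\langle f,g\rangle Q$ we have $\ell_i^*r_i=r_i\ell_i^*+Q$. Hence
\[
\tfrac1N\sum_{i\le N}s_iJs_iJ-Q=\tfrac1N\sum_{i\le N}\bigl(\ell_ir_i+\ell_ir_i^*+r_i\ell_i^*+\ell_i^*r_i^*\bigr),
\]
and it suffices to show that each of these four sums has norm $O(\sqrt N)$.

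\emph{Step 2 (the key estimate).} For any operators $a_i,b_i$ we have the inequality
\[
\Bigl\|\sum_i a_ib_i\Bigr\|\le\Bigl\|\sum_i a_ia_i^*\Bigr\|^{1/2}\Bigl\|\sum_i b_i^*b_i\Bigr\|^{1/2}.
\]
This follows by writing $\sum_i a_ib_i$ as a row times a column.

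We also need a uniform bound $\|\sum_{i\le N}\ell_i\ell_i^*\|\le C_q$ with $C_q$ independent of $N$, and similarly for $\sum_i r_ir_i^*$ (by conjugating with $J$). For this, note that $\sum_i\ell_i\ell_i^*$ is dominated by $TT^*$, where $T:H\otimes\F\to\F$, $h\otimes\xi\mapsto\ell(h)\xi$, is the total creation map. Bożejko's estimate $\|T\|\le\max\{1,(1-q)^{-1/2}\}$ then gives the bound. I expect this uniform bound, i.e.\ justifying that $T$ is bounded with a constant independent of the length, to be the only non-formal input. If necessary I would instead derive it on each $H_q^{\otimes n}$ from the factorization $P^{(n)}=(1\otimes P^{(n-1)})R_n$ with $\|R_n\|\le(1-|q|)^{-1}$.

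On the other side, the relation $\ell_i^*\ell_i=1+q\ell_i\ell_i^*$ gives $\|\sum_{i\le N}\ell_i^*\ell_i\|\le N+|q|C_q$, and likewise for $\sum_i r_i^*r_i$.

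\emph{Step 3 (bounding the four sums).} Apply Step 2 to each sum:
\begin{itemize}
\item[] $\ell_ir_i$: take $a_i=\ell_i$, $b_i=r_i$; this gives $O(\sqrt N)$.
\item[] $\ell_ir_i^*$: take $a_i=\ell_i$, $b_i=r_i^*$; this gives $O(1)$.
\item[] $r_i\ell_i^*$: take $a_i=r_i$, $b_i=\ell_i^*$; this gives $O(1)$.
\item[] $\ell_i^*r_i^*$: take $a_i=\ell_i^*$, $b_i=r_i^*$; this gives $O(\sqrt N)$.
\end{itemize}
Dividing by $N$ proves the norm convergence in (1). Consequently $Q\in C$.

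Since $q\neq0$, the spectrum of $Q$ is $\{q^n\}_{n\ge0}\cup\{0\}$. Each $q^n$ is an isolated point of this set. Therefore $P_n=f_n(Q)$ for a continuous function $f_n$ equal to $1$ at $q^n$ and $0$ at the other points of the spectrum, which gives $P_n\in C$.

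\emph{Step 4 (proof of (2)).} Expand
\[
s_iP_nJs_iJ=(\ell_i+\ell_i^*)P_n(r_i+r_i^*).
\]
For the distinguished term, use $P_nr_i=r_iP_{n-1}$ and the relation from Step 1:
\[
\ell_i^*P_nr_i=\ell_i^*r_iP_{n-1}=r_i\ell_i^*P_{n-1}+QP_{n-1}=r_i\ell_i^*P_{n-1}+q^{n-1}P_{n-1}.
\]
The remaining terms are $\ell_iP_nr_i$, $\ell_iP_nr_i^*$, $\ell_i^*P_nr_i^*$ and $r_i\ell_i^*P_{n-1}$. Their sums over $i\le N$ are bounded exactly as in Step 3. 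We absorb $P_n$ into one factor (for instance $b_i=P_nr_i$ or $a_i=\ell_iP_n$), using $\|P_n\|\le1$ so that $\sum_i b_i^*b_i\le\sum_i r_i^*r_i$. This gives norm $O(\sqrt N)$ for each, and after dividing by $N$ we obtain convergence to $q^{n-1}P_{n-1}$ in norm.
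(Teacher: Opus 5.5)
Your proof follows the paper's argument essentially line for line. It uses the same expansion via $\ell_i^*r_i=r_i\ell_i^*+Q$ and the same row--column estimates: $O(\sqrt N)$ for $\ell_ir_i$ and $\ell_i^*r_i^*$, and $O(1)$ for $\ell_ir_i^*$ and $r_i\ell_i^*$. The functional calculus for $P_n$ and the handling of (2) are also the same.

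The one thing to correct is your justification of the uniform bound on $\sum_{i\le N}\ell_i\ell_i^*$. The paper simply quotes this from Hiai's lemma, with constant $(1-|q|)^{-1}$. The constant $\max\{1,(1-q)^{-1/2}\}$ is the norm of a single $\ell(h)$, not of the total creation map $T$. For $q<0$ it is false for $T$. Take an orthonormal basis $(e_i)$ of $H$. Then $TT^*=\sum_i\ell(e_i)\ell(e_i)^*$ sends $h_1\otimes h_2$ to $h_1\otimes h_2+q\,h_2\otimes h_1$. On antisymmetric $2$-tensors it therefore acts as $1-q=1+|q|>1$.

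Your fallback via the factorization does give a correct bound, but not by Cauchy--Schwarz alone. The estimate $\|R_n\|\le(1-|q|)^{-1}$ is a norm bound for the free inner product, not for the one induced by $1\otimes P^{(n-1)}$. The missing step is as follows:
\begin{itemize}
\item $P^{(n)}=(1\otimes P^{(n-1)})R_n$ is positive.
\item Hence $(P^{(n)})^2=(1\otimes P^{(n-1)})R_nR_n^*(1\otimes P^{(n-1)})\le\|R_n\|^2(1\otimes P^{(n-1)})^2$.
\item Operator monotonicity of the square root then gives $P^{(n)}\le\|R_n\|(1\otimes P^{(n-1)})$, i.e.\ $\|T\|^2\le(1-|q|)^{-1}$.
\end{itemize}
Alternatively, the commutation relation gives $T^*T=1+q(1\otimes T)\Sigma(1\otimes T^*)$, where $\Sigma$ flips the two $H$-factors. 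Writing $T_n$ for the restriction of $T$ to $H\otimes H_q^{\otimes n}$, this yields $\|T_n\|^2\le1+|q|\,\|T_{n-1}\|^2$ inductively.

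Only the finiteness of $C_q$ is used, so the rest of your argument goes through unchanged.
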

\begin{proof}
Put \(c_q=(1-|q|)^{-1}\). By \cite[Lemma~2.1]{Hia03} (see also
\cite[Lemma~A.2]{Yan24}),
\(
 \norm{\sum_{i=1}^N\ell_i\ell_i^*}\leq c_q.
\)
The same estimate holds with \(r_i\) in place of \(\ell_i\). On the
other hand, the creation-operator bound above gives
\(
 \norm{\sum_{i=1}^N\ell_i^*\ell_i},
 \ \norm{\sum_{i=1}^Nr_i^*r_i}
 \leq Nc_q.
\)
Thus the row operators
\[
 R_\ell=[\ell_1\ \cdots\ \ell_N],\qquad
 R_r=[r_1\ \cdots\ r_N]:\F^{\oplus N}\longrightarrow\F
\]
satisfy \(\norm{R_\ell},\norm{R_r}\leq c_q^{1/2}\); for \(R_r\), use
\(r_i=J\ell_iJ\). If
\(C_\ell=[\ell_1\ \cdots\ \ell_N]^{\mathsf t}\) and
\(C_r=[r_1\ \cdots\ r_N]^{\mathsf t}\), then the creation-operator
bound above gives
\(\norm{C_\ell},\norm{C_r}\leq(Nc_q)^{1/2}\).

For~\emph{(1)}, let \(N\geq1\) and put
\(S_N=N^{-1}\sum_{i=1}^Ns_iJs_iJ\in C\).  Since
\(\ell_i^*r_i=r_i\ell_i^*+Q\), one has
\[
 S_N-Q=\frac1N\sum_{i=1}^N
 \bigl(\ell_ir_i+\ell_ir_i^*+r_i\ell_i^*+\ell_i^*r_i^*\bigr).
\]
We have
\begin{align*}
 \norm{\sum_{i=1}^N\ell_ir_i}
 &\leq\norm{R_\ell}\norm{C_r}\leq c_q\sqrt N,
 &
 \norm{\sum_{i=1}^N\ell_ir_i^*}
 &\leq\norm{R_\ell}\norm{R_r}\leq c_q.
\end{align*}
The sum involving \(\ell_i^*r_i^*\) obeys the first bound, while the
sum involving \(r_i\ell_i^*\) obeys the second. Hence
\[
 \norm{S_N-Q}
 \leq2c_q\bigl(N^{-1/2}+N^{-1}\bigr)\longrightarrow0.
\]
Consequently, \(Q\in C\). Since \(q^n\) is isolated in
\(\sigma(Q)=\{0\}\cup\{q^k:k\geq0\}\), continuous functional calculus
gives \(P_n\in C^*(Q)\subset C\).

For~\emph{(2)}, fix \(n\geq1\) and \(N\geq1\). Note that
\(\ell_i^*P_nr_i=\ell_i^*r_iP_{n-1}
=(r_i\ell_i^*+Q)P_{n-1}
=r_i\ell_i^*P_{n-1}+q^{n-1}P_{n-1}\).
Expanding \(s_iP_nJs_iJ\), it follows that
\[
 s_iP_nJs_iJ-q^{n-1}P_{n-1}
 =\ell_iP_nr_i+\ell_iP_nr_i^*
  +r_i\ell_i^*P_{n-1}+\ell_i^*P_nr_i^*.
\]

Since \(P_n\) and \(P_{n-1}\) are contractions, we have
\begin{align*}
 \norm{\sum_{i=1}^N\ell_iP_nr_i}
 &\leq\norm{R_\ell}\norm{C_r}\leq c_q\sqrt N,
 &
 \norm{\sum_{i=1}^N\ell_iP_nr_i^*}
 &\leq\norm{R_\ell}\norm{R_r}\leq c_q.
\end{align*}
The sum involving \(\ell_i^*P_nr_i^*\) obeys the first bound, while
the sum involving \(r_i\ell_i^*P_{n-1}\) obeys the second.
Consequently,
\[
 \norm{
  \frac1N\sum_{i=1}^Ns_iP_nJs_iJ
  -q^{n-1}P_{n-1}}
 \leq2c_q\bigl(N^{-1/2}+N^{-1}\bigr)\longrightarrow0.
\]
\end{proof}

\subsection{Boundary pieces for von Neumann algebras}

We recall the notion of boundary pieces from \cite{DKEP22,DP23}.
Let $M$ be a von Neumann algebra and let $J$ denote its canonical
conjugation on $L^2M$. A nonzero hereditary $C^\ast$-subalgebra
$\bbX\subset\B(L^2M)$ is called an $M$-boundary piece if
$\mathcal M(\bbX)\cap M$ and $\mathcal M(\bbX)\cap JMJ$ are
ultraweakly dense in $M$ and $JMJ$, respectively, where
$\mathcal M(\bbX)$ denotes the multiplier algebra of $\bbX$. For
$T\in\B(L^2M)$, consider the $L^\infty-L^1$ norm $\|T\|_{\infty,1}=\sup_{a,b\in(M)_1}
|\langle T\widehat a,\widehat b\rangle|$.
For an $M$-boundary piece $\bbX$, following \cite{DKEP22}, we denote 
$\mathbb K_{\bbX}^{\infty,1}(M) = \overline{\bbX}^{\infty,1}$ the $\|\cdot\|_{\infty,1}$-closure
of $\bbX$.
When discussing relative biexactness and the small-at-infinity boundary, we use the notation \(\mathbb K_{\bbX}^{\infty,1}(M)\) to conform with the literature. Elsewhere, when only the \(\|\cdot\|_{\infty,1}\)-closure of \(\bbX\) is relevant, we use the simpler notation \(\overline{\bbX}^{\infty,1}\). Note that although $\mathbb K_{\bbX}^{\infty,1}(M)$ is not an algebra
in general, it is invariant under left and right multiplication by $M$
and $JMJ$.

Following \cite{DKEP22,DP23}, define the small-at-infinity
boundary relative to $\bbX$ by the operator system
\[
 \mathbb S_{\bbX}(M)
 =\{T\in\B(L^2M):[T,x]\in\mathbb K_{\bbX}^{\infty,1}(M)
       \text{ for every }x\in JMJ\}.
\]
We say that $M$ is \emph{biexact relative to $\bbX$} if the inclusion
$M\subset\mathbb S_{\bbX}(M)$ is $M$-nuclear. Here $M$-nuclearity
means nuclearity with respect to the weak $M$-topology induced by the
states $\varphi\in\B(L^2M)^\ast$ that are normal when restricted to $M$. See \cite{DP23} for details about the definition of biexact von Neumann algebra.

We now specialize to the $q$-Gaussian setting. Assume that
$H_{\bbR}$ is separable and has infinite dimension, and that $q\ne0$.
Put in short $A_q=A_q(H_{\bbR})$ and $M=M_q(H_{\bbR})$. 
The canonical boundary piece of $M$ we are interested in is
\[
 \bbX_q
 =\overline{\operatorname{span}}^{\|\cdot\|}
 \left\{
 P_kTP_\ell:
 k,\ell\geq0,\;
 T\in\B(\mathcal F_q(H))
 \right\}.
\]

Equivalently, $\bbX_q$ is the hereditary $C^\ast$-subalgebra of
$\B(\mathcal F_q(H))$ generated by the length projections $P_n$. Note that
the projections $P_{\leq n}=\sum_{k=0}^nP_k$ form an approximate identity
for $\bbX_q$. Moreover, $A_q,JA_qJ\subset\mathcal M(\bbX_q)$, where
$A_q\subset M$ is the $q$-Gaussian $C^\ast$-algebra, and thus
$\bbX_q$ is an $M_q$-boundary piece. The other boundary piece we need for $M_q$ is $\K(L^2M_q)=\K(\F)$.

For a finite von Neumann algebra \(M\), we say a projection
\(P\in\B(L^2M)\) \emph{\(M\)-regular} if it satisfies one of the
equivalent conditions in the following basic lemma. Denote $ \widehat{M}= M\hat{1}\subset L^2M $.

\begin{lemma}
\label{lem:regular-corners}
Let \(M\) be a finite von Neumann algebra and
\(P\in\B(L^2M)\) a projection.  The following are equivalent:
\begin{enumerate}
\item \(P(L^2M)\subset\widehat M\);
\item \(P(L^2M)\subset\widehat M\), and
\(P:L^2M\to(\widehat M,\norm{\cdot}_\infty)\) is bounded;
\item \(P_{\mid\widehat M}:(\widehat M,\norm{\cdot}_1)
\to(L^2M,\norm{\cdot}_2)\) is bounded.
\end{enumerate}
Moreover, if \(P\) is \(M\)-regular, then the following maps are
continuous:
\[
 \begin{aligned}
 (M,\norm{\cdot}_2)\ni x
 &\mapsto xP\in\bigl(\B(L^2M),\norm{\cdot}\bigr),\\
 \bigl(\B(L^2M),\norm{\cdot}_{\infty,1}\bigr)\ni T
 &\mapsto PTP\in\bigl(\B(L^2M),\norm{\cdot}\bigr).
 \end{aligned}
\]
\end{lemma}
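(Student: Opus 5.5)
The plan is to run the cycle (3)$\Rightarrow$(1)$\Rightarrow$(2)$\Rightarrow$(3) and then derive the two continuity statements from (2) and (3). Throughout, $\tau$ is the faithful normal trace, $L^2M$ is the completion of $\widehat M$, and $L^1M\supset\widehat M$ carries $\norm{\cdot}_1$.

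\textbf{(3)$\Rightarrow$(1).} Assume $\norm{P\hat x}_2\le C\norm{x}_1$ for all $x\in M$. For $\xi\in L^2M$, the functional $\hat x\mapsto\langle P\hat x,\xi\rangle$ on $\widehat M$ satisfies $|\langle P\hat x,\xi\rangle|\le C\norm{\xi}_2\norm{x}_1$. It therefore extends to a bounded functional on $L^1M$ of norm at most $C\norm{\xi}_2$, so by $L^1$--$L^\infty$ duality it is $x\mapsto\tau(xy^*)$ for some $y\in M$ with $\norm{y}_\infty\le C\norm{\xi}_2$. Since $\widehat M$ is dense, $\langle P\hat x,\xi\rangle=\langle\hat x,\hat y\rangle$ for all $x$, and hence $P\xi=\hat y\in\widehat M$. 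This gives (1) together with the bound $\norm{P\xi}_\infty\le C\norm{\xi}_2$, which is already (2).

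\textbf{(1)$\Rightarrow$(2).} This is the main obstacle. Define $T:L^2M\to M$ by $T\xi=$ the element $y$ with $\hat y=P\xi$. Boundedness will follow from the closed graph theorem between Banach spaces, once we know the graph is closed. Suppose $\xi_n\to\xi$ in $L^2$ and $T\xi_n\to y$ in $\norm{\cdot}_\infty$. Since $\norm{\cdot}_2\le\norm{\cdot}_\infty$, we get $\widehat{T\xi_n}\to\hat y$ in $L^2$. But $\widehat{T\xi_n}=P\xi_n\to P\xi$, so $\hat y=P\xi$ and $y=T\xi$. Thus $T$ is bounded.

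\textbf{(2)$\Rightarrow$(3).} Suppose $\norm{P\xi}_\infty\le C\norm{\xi}_2$. Then for $x\in M$,
\[
\norm{P\hat x}_2^2=\langle P\hat x,\hat x\rangle=\tau\bigl(y^*x\bigr)\le\norm{y}_\infty\norm{x}_1\le C\norm{P\hat x}_2\norm{x}_1,
\]
where $\hat y=P\hat x$. Hence $\norm{P\hat x}_2\le C\norm{x}_1$.

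\textbf{First continuity statement.} For $\xi\in L^2M$ write $P\xi=\hat y$, so that $xP\xi=\widehat{xy}$. Then
\[
\norm{xP\xi}_2=\norm{xy}_2=\norm{Jy^*J\,\hat x}_2\le\norm{y}_\infty\norm{x}_2\le C\norm{\xi}_2\norm{x}_2,
\]
and so $\norm{xP}\le C\norm{x}_2$.

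\textbf{Second continuity statement.} For unit vectors $\xi,\eta$ write $P\xi=\hat a$ and $P\eta=\hat b$; by (2), $\norm{a}_\infty,\norm{b}_\infty\le C$. Then
\[
|\langle PTP\xi,\eta\rangle|=|\langle T\hat a,\hat b\rangle|\le C^2\norm{T}_{\infty,1}.
\]
Taking the supremum over $\xi,\eta$ gives $\norm{PTP}\le C^2\norm{T}_{\infty,1}$.
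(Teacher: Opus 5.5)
Your proof is correct and follows essentially the same route as the paper: the closed graph theorem for (1)$\Rightarrow$(2), $L^1$--$L^\infty$ duality for (3)$\Rightarrow$(2), and the same estimates $\norm{xP}\le C\norm{x}_2$ and $\norm{PTP}\le C^2\norm{T}_{\infty,1}$. The only difference is cosmetic: for (2)$\Rightarrow$(3) you apply the bound to $P\hat x=P(P\hat x)$, whereas the paper writes $\norm{P\hat x}_2=\sup_{\norm{\xi}_2\le1}|\langle\hat x,P\xi\rangle|$ and uses $\norm{P\xi}_\infty\le c$ directly.
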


\begin{proof}
Clearly, (2) implies (1), while (1) implies (2) by the closed graph
theorem.
Suppose that the map in (2) has norm at most \(c\).  For \(x\in M\),
one has
\[
 \norm{P\widehat{x}}_2
 =\sup_{\norm{\xi}_2\leq1}|\langle\widehat{x},P\xi\rangle|
 \leq c\sup_{\norm{y}_\infty\leq1}
 |\langle\widehat{x},\widehat{y}\rangle|
 =c\norm{x}_1.
\]
Hence (3) holds.  Conversely, if the map in (3) has norm at most
\(c\), then, for \(\xi\in L^2M\) and \(x\in M\),
\(
 |\langle P\xi,\widehat{x}\rangle|
 =|\langle\xi,P\widehat{x}\rangle|
 \leq c\norm{\xi}_2\norm{x}_1.
\)
It follows that \(P\xi\in\widehat M\) and
\(\norm{P\xi}_\infty\leq c\norm{\xi}_2\), so (2) holds.

Assume now that \(P\) is \(M\)-regular, and let \(c\) be the common
norm of the maps in (2) and (3).  For \(x\in M\),
\[
 \norm{xP}
 =\sup_{\norm{\xi}_2\leq1}\norm{xP\xi}_2
 \leq\norm{x}_2\sup_{\norm{\xi}_2\leq1}\norm{P\xi}_\infty
 \leq c\norm{x}_2.
\]
Finally, for \(T\in\B(L^2M)\),
\(
 \norm{PTP}
 =\sup_{\norm{\xi}_2,\norm{\eta}_2\leq1}
 |\langle TP\xi,P\eta\rangle|
 \leq c^2\norm{T}_{\infty,1}.
\)
\end{proof}

We now collect the notation used throughout the paper.

\begin{notation}\label{not:q-gaussian-notation}
Let $H_{\bbR}$ be a separable infinite-dimensional real Hilbert space, $H=H_{\bbR}\oplus iH_{\bbR}$, and fix
$q\in(-1,1)\setminus\{0\}$. Let $\mathcal F_q(H)$ be the $q$-Fock
space, put $M_q=M_q(H_{\bbR})$ and $A_q=A_q(H_{\bbR})$, and identify
$L^2(M_q)$ with $\mathcal F_q(H)$, with canonical conjugation $J$.
For $n\geq0$, let $P_n^{(q)}$ be the projection onto
$H_q^{\otimes n}$. Finally, recall that
\[
 \bbX_q
 =\overline{\operatorname{span}}^{\|\cdot\|}
 \{P_k^{(q)}TP_\ell^{(q)}:k,\ell\geq0,\
 T\in\B(\mathcal F_q(H))\}
\]
is the $M_q$-boundary piece introduced above.
Inside $\B(\mathcal F_q(H))$, we use the following notation.

\noindent
\begin{minipage}[t]{0.48\textwidth}
\vspace{0pt}
\begin{itemize}
 \renewcommand{\labelitemi}{\textendash}
 \item $C_q=C^\ast(A_q,JA_qJ)$.
 \item $J_n^{(q)}=
 \overline{C_qP_n^{(q)}C_q}^{_{\|\cdot\|}}$ for $n\geq0$.
 \item $X_q=\overline{\bigcup_{n\geq0}J_n^{(q)}}^{_{\|\cdot\|}}$.
\end{itemize}
\end{minipage}\hfill
\begin{minipage}[t]{0.48\textwidth}
\vspace{0pt}
\begin{itemize}
 \renewcommand{\labelitemi}{\textendash}
 \item $\mathcal C_q=C^\ast(M_q,JM_qJ)$.
 \item $\mathcal X_q=\mathcal C_q\cap
 \overline{\bbX_q}^{_{\|\cdot\|_{\infty,1}}}$.
 \item $\mathcal K_q=\mathcal C_q\cap
 \overline{\K(\mathcal F_q(H))}^{_{\|\cdot\|_{\infty,1}}}
 \subset\mathcal X_q$.
\end{itemize}
\end{minipage}

We record that 
$P_n^{(q)}\in C_q$ and hence
$J_n^{(q)}\lhd C_q$, for every $n\geq0$;
$X_q=C_q\cap\bbX_q$;
and $ \mathcal X_q $ and $ \mathcal K_q $ are both ideals of $ \mathcal C_q $.

The first assertion follows from
part~(\ref{item:length-averages-projections}) of
Lemma~\ref{lem:length-averages}. For the second,
part~(\ref{item:length-averages-convergence}) of
Lemma~\ref{lem:length-averages} gives
$P_{\leq n}^{(q)}:=\sum_{k=0}^nP_k^{(q)}\in J_n^{(q)}$.
Since $P_{\leq n}^{(q)}$ is an approximate identity for $\bbX_q$,
every $x\in C_q\cap\bbX_q$ is the norm limit of
$P_{\leq n}^{(q)}xP_{\leq n}^{(q)}\in J_n^{(q)}$.
Conversely, $C_q\subset\mathcal M(\bbX_q)$, so
$J_n^{(q)}\subset C_q\cap\bbX_q$. 
Lastly, notice that  $ \overline{\bbX_q}^{_{\|\cdot\|_{\infty,1}}} $ 
 and $ \overline{\K(\mathcal F_q(H))}^{_{\|\cdot\|_{\infty,1}}}$ 
 contains both $M_q$ and $JM_qJ$ in their multipliers.

We suppress $q$ from the notation
whenever no confusion can arise.
\end{notation}

\section{Computing the Akemann--Ostrand kernels}
\label{sec:akemann-ostrand-kernel}

Our strategy towards the main theorem begins with the observation that an
isomorphism $M_q\cong M_r$ identifies the corresponding
Akemann--Ostrand kernels. The first step is therefore to compute this
kernel for $M_q$. 
Using the machinery of \cite{DP23} and the proof of \cite{Oza10},
we prove in this section that it is precisely
$\mathcal X_q=\mathcal C_q\cap
\overline{\bbX_q}^{_{\|\cdot\|_{\infty,1}}}$.

\begin{theorem}\label{thm:akemann-ostrand-kernel}
Let
$q\in(-1,1)\setminus\{0\}$. The map
\[
 \mathcal C_q=C^\ast(M_q,JM_qJ)\ni xJyJ
 \longmapsto x\otimes JyJ
 \in M_q\otimes_{\min}JM_qJ
\]
extends to a surjective \(\ast\)-homomorphism \(\pi_q\) with
\(\ker\pi_q=\mathcal X_q\).
\end{theorem}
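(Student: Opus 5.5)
We plan to prove the two inclusions \(\mathcal X_q\subset\ker\pi_q\) and \(\ker\pi_q\subset\mathcal X_q\) separately. Along the way we will establish that \(\pi_q\) is well defined and bounded on \(\mathcal C_q\).

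\textbf{Step 1: \(\pi_q\) is well defined and \(\mathcal X_q\subset\ker\pi_q\).} We first check that \(M_q\) is biexact relative to the boundary piece \(\bbX_q\), i.e.\ that \(M_q\subset\mathbb S_{\bbX_q}(M_q)\) is \(M_q\)-nuclear. Following Ozawa's argument in \cite{Oza10}, this reduces to two ingredients.

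\begin{enumerate}
\item The \(C^*\)-algebra \(A_q\) is exact. This holds because \(A_q\) sits inside the Toeplitz-type algebra generated by creation operators, which is nuclear.
\item Commutators \([a,JbJ]\), with \(a\in A_q\) and \(b\in JA_qJ\), lie in \(\bbX_q\) modulo a small error. Concretely, using the commutation relations of Section~\ref{subsec:q-gaussian-algebras}, we compute \([s(h),Js(g)J]=\langle h,g\rangle(\text{terms in }Q)\). Since \(Q=\sum q^nP_n\) is a norm-convergent sum of elements of \(\bbX_q\), this commutator lies in \(\bbX_q\).
\end{enumerate}

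Given these, we pass from \(A_q\) to \(M_q\) via the \(\|\cdot\|_{\infty,1}\)-closure, as in \cite{DP23}. The machinery of \cite{DP23} (relative biexactness yields a \(\mathrm W^*\)-AO-type map) then produces a \(*\)-homomorphism
\[
\mathcal C_q\to \B(L^2M_q)/\!\!/\,\mathbb K^{\infty,1}_{\bbX}
\]
that factors through \(M_q\otimes_{\min}JM_qJ\). More precisely, it shows that \(\sum x_iJy_iJ\mapsto\sum x_i\otimes Jy_iJ\) is contractive modulo \(\mathcal X_q\). The reverse bound \(\|\sum x_i\otimes Jy_iJ\|_{\min}\le\|\sum x_iJy_iJ\|\) holds because commuting algebras, one of which is a factor, generate a copy of a quotient of the max tensor product. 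Combining the two bounds gives well-definedness, and surjectivity is then clear. Finally, every element of \(\mathcal X_q\) is sent to \(0\): we check this on \(P_k^{(q)}\), using Lemma~\ref{lem:length-averages}(1), which exhibits \(Q\) as the norm limit of \(\tfrac1N\sum s_iJs_iJ\). Under \(\pi_q\) these averages go to \(\tfrac1N\sum s_i\otimes Js_iJ\), whose norm tends to \(0\) by free-independence-type estimates (Haagerup-type inequality for the \(q\)-semicircular \(s_i\otimes\bar s_i\)). Hence \(\pi_q(Q)=0\), and so \(\pi_q(P_n)=0\). This gives \(J_n\subset\ker\pi_q\), and density together with \(\|\cdot\|_{\infty,1}\)-continuity arguments yields \(\mathcal X_q\subset\ker\pi_q\).

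\textbf{Step 2: \(\ker\pi_q\subset\mathcal X_q\).} This is the main obstacle. Here we use the converse direction of the relative AO characterization in \cite{DP23}: biexactness relative to \(\bbX\) gives a u.c.p.\ map
\[
\Phi:M_q\otimes_{\min}JM_qJ\to\B(L^2M_q)/\mathbb K^{\infty,1}_{\bbX}
\]
with \(\Phi\circ\pi_q=\) the quotient map on \(\mathcal C_q\), at least on algebraic tensors. Granting this, any \(T\in\ker\pi_q\) maps to \(0\) modulo \(\overline{\bbX_q}^{\infty,1}\), i.e.\ \(T\in\mathcal X_q\). The delicate point is making sense of the "quotient" by the non-algebra \(\mathbb K^{\infty,1}_{\bbX}\). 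We would handle it as in \cite{DP23}, working in the \(\mathrm W^*\)-sense by compressing with \(M\)-regular projections (Lemma~\ref{lem:regular-corners}), so that \(\|\cdot\|_{\infty,1}\)-convergence becomes norm convergence on corners. We expect that verifying the nuclear factorization for \(M_q\) itself, rather than for \(A_q\), requires the normality and \(\|\cdot\|_{\infty,1}\)-closure bookkeeping; that is where most of the work will lie.
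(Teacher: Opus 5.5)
Your overall shape matches the paper: relative biexactness gives a $*$-homomorphism $\nu_q\colon M_q\otimes_{\min}JM_qJ\to\mathcal C_q/\mathcal X_q$, which is then compared with $\pi_q$. However, two steps fail as written.

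\textbf{Relative biexactness.} Ozawa's scheme needs a \emph{nuclear} $C^*$-algebra containing $A_q$ whose commutators with $JA_qJ$ lie in $\bbX_q$. The natural candidate $C^*(\ell(h):h\in H)$ is only known to be nuclear for small $|q|$ (e.g.\ $|q|<0.44$, via \cite{DN93}), which is exactly why the paper does not use it. Exactness of $A_q$ does hold for all $q$ \cite{KN10}, but exactness alone does not give biexactness. Moreover, the commutator you compute is identically zero, since $s(h)\in M_q$ and $Js(g)J\in M_q'$: the terms $[\ell(h),r(g)^*]=-\langle g,h\rangle Q$ and $[\ell(h)^*,r(g)]=\langle h,g\rangle Q$ cancel for real $h,g$. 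The meaningful input is $[\ell(h),JA_qJ]\subset\bbX_q$, and it is useful only together with the unavailable nuclearity. The paper instead verifies the criterion of \cite[Theorem~7.3]{DP23} by a deformation argument valid for every $q$. It uses the rotations $\alpha_t$ of $M_q(H_{\bbR}\oplus H_{\bbR})$, weak coarseness of $Q_dL^2(\widetilde M)$ as an $M_d$-bimodule (from \cite{DP23} and \cite{Av11}), the explicit formula $\Phi_t(R_k)=\sum_{n\geq k}\binom nk\cos^{2(n-k)}(t)\sin^{2k}(t)P_n\in\bbX_q$, and weak exactness of $M_q$ from \cite{KN10}.

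\textbf{The inclusion $\mathcal X_q\subset\ker\pi_q$.} Your computation $\pi_q(Q)=0$ is correct: the row/column bounds from the proof of Lemma~\ref{lem:length-averages} give $\|\frac1N\sum_{i\le N}s_i\otimes Js_iJ\|_{\min}=O(N^{-1/2})$. But it only shows that $\pi_q$ kills the norm-closed ideal generated by the $P_n$. The ideal $\mathcal X_q=\mathcal C_q\cap\overline{\bbX_q}^{\,\|\cdot\|_{\infty,1}}$ is defined through an $\|\cdot\|_{\infty,1}$-closure, $\pi_q$ has no $\|\cdot\|_{\infty,1}$-continuity, and nothing shows that $\overline{\bigcup_nJ_n}$ is dense in $\mathcal X_q$. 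So ``density and continuity'' does not close the argument. The missing idea is simplicity: $M_q$ and $JM_qJ$ are simple $C^*$-algebras, so $M_q\otimes_{\min}JM_qJ$ is simple \cite{Tak64}. Hence the unital $\nu_q$ is injective and $\ker\pi_q=\ker(\nu_q\circ\pi_q)=\mathcal X_q$.

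Relatedly, your Step~2 is not the main obstacle. Once \cite[Theorem~7.19]{DP23} provides $\nu_q$, the identity $\nu_q\circ\pi_q=$ quotient map gives $\ker\pi_q\subset\mathcal X_q$ at once. Your construction of $\pi_q$ itself is a legitimate alternative to the paper's, which obtains it as $\nu_q^{-1}$ composed with the quotient. For that, the bound $\|\sum x_i\otimes Jy_iJ\|_{\min}\leq\|\sum x_iJy_iJ\|$ must be justified by injectivity of $M_q\odot JM_qJ\to\B(L^2M_q)$ for a factor and minimality of the spatial norm, not by the max tensor product.
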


We refer to \cite{DP23} for the basic properties and equivalent
formulations of relative biexactness.
Theorem~\ref{thm:akemann-ostrand-kernel} follows
from the following relative biexactness result.

\begin{proposition}\label{prop:relative-biexactness}
Let $q\in(-1,1)\setminus\{0\}$ and $H_{\bbR}$ be a separable
infinite-dimensional real Hilbert space. Then the $q$-Gaussian von
Neumann algebra $M_q(H_{\bbR})$ is biexact relative to the boundary
piece $\bbX_q$.
\end{proposition}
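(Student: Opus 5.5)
We plan to use the criterion from \cite{DP23}: biexactness relative to $\bbX$ follows once there is a weakly dense, exact (in fact nuclear) $C^\ast$-subalgebra $A\subset M$ with $A\subset\mathcal M(\bbX)$ such that the commutators $[a,JbJ]$ lie in $\bbX$ (or in $\mathbb K^{\infty,1}_{\bbX}(M)$) for all $a,b\in A$. Concretely, the map $A\otimes_{\mathrm{alg}}JAJ\to C^\ast(A,JAJ)/\bbX$ should then extend to the minimal tensor product, and the resulting $M$-nuclear embedding $M\subset\mathbb S_{\bbX}(M)$ comes from the \cite{DP23} machinery, in the same way as in Ozawa's argument \cite{Oza10}. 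We take $A=A_q$. It is weakly dense in $M_q$, it is exact because it is nuclear for $q\neq0$ by \cite{BCKW22}, and it lies in $\mathcal M(\bbX_q)$, as recorded in the paper. So the first step is to set up this reduction precisely and to cite \cite{DP23} for the passage from a boundary-piece version of condition (AO) to relative biexactness.

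The core step is to verify that $[x,JyJ]\in\bbX_q$ for $x,y\in A_q$. Since $\bbX_q$ is a norm-closed ideal in $C^\ast(A_q,JA_qJ)+\bbX_q$, it is enough to check generators, so we need $[s(f),Js(g)J]\in\bbX_q$ for $f,g\in H_\bbR$. We have $Js(g)J=r(g)+r(g)^\ast$. By the commutation relations in the preliminaries, $\ell(f)$ and $r(g)$ commute, and so do $\ell(f)^\ast$ and $r(g)^\ast$. The mixed terms give
\[
[\ell(f)^\ast,r(g)]=\langle f,g\rangle Q,\qquad [\ell(f),r(g)^\ast]=-\langle g,f\rangle Q .
\]
Hence $[s(f),Js(g)J]$ is a scalar multiple of $Q=\sum_n q^nP_n$, and this series converges in norm because $|q|<1$. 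Each partial sum lies in $\bbX_q$, so $Q\in\bbX_q$, and therefore the commutator lies in $\bbX_q$.

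Next we pass from generators to all of $A_q$. The set of $x\in A_q$ with $[x,JyJ]\in\bbX_q$ for all $y$ in a fixed generating set is a $\ast$-subalgebra, using the derivation identity and the fact that $A_q,JA_qJ\subset\mathcal M(\bbX_q)$. It is also norm closed. A symmetric argument then gives the statement for all $x,y\in A_q$. As a result, the product map $A_q\odot JA_qJ\to C^\ast(A_q,JA_qJ)/(C^\ast(A_q,JA_qJ)\cap\bbX_q)$ is a $\ast$-homomorphism.

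The main obstacle is boundedness of this map for the minimal tensor norm, and its upgrade to the von Neumann level, meaning $M$-nuclearity of $M\subset\mathbb S_{\bbX_q}(M)$. For the first part we would rely on nuclearity of $A_q$, so that the min and max norms agree and continuity is automatic. For the second we would invoke the result of \cite{DP23} that, for a weakly dense exact $C^\ast$-subalgebra in $\mathcal M(\bbX)$ satisfying the commutator condition, $M$ is biexact relative to $\bbX$. The delicate point is that \cite{DP23} may require the nuclear (or exact) subalgebra to be $\sigma$-weakly dense together with a local reflexivity or u.c.p. lifting condition. If that hypothesis is needed, we would supply it from the nuclearity of $A_q$ via Choi--Effros lifting of $A_q\otimes_{\min}JA_qJ\to C^\ast(A_q,JA_qJ)/(C^\ast(A_q,JA_qJ)\cap\bbX_q)$, following the proof in \cite{Oza10}.
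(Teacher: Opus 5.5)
\textbf{There is a genuine gap.} The commutators you compute vanish, so your argument never uses $\bbX_q$, and all the weight falls on a nuclearity claim that is false.

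\emph{The commutator step is vacuous.} Since $JM_qJ=M_q'$, one has $[x,JyJ]=0$ for all $x,y\in M_q$. In your computation the two mixed terms are $\langle f,g\rangle Q$ and $-\langle g,f\rangle Q$, and they cancel because $f,g\in H_\bbR$. So the commutator condition, the inclusion $M\subset\mathbb S_{\bbX_q}(M)$, and the fact that $A_q\odot JA_qJ\to C^*(A_q,JA_qJ)/(C^*(A_q,JA_qJ)\cap\bbX_q)$ is a $\ast$-homomorphism are all automatic. The whole content of relative biexactness is the $M$-nuclearity of that inclusion. In (AO) terms, this means min-continuity of the product map modulo $\bbX_q$ together with a u.c.p.\ lift.

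\emph{The nuclearity claim is false.} $A_q$ has a faithful trace whose GNS closure $M_q$ is non-injective \cite{BKS97,Nou06}, which is impossible for a nuclear $C^*$-algebra. \cite{BCKW22} gives unique trace and \cite{KN10} gives exactness, not nuclearity. So ``min $=$ max'' fails, and Choi--Effros lifting is unavailable.

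\emph{The argument proves too much.} Run verbatim with $\K(\F)$ in place of $\bbX_q$, it would give $\ker\pi_q\subset\mathcal K_q$ for $q\neq0$, which is false \cite{Cas23}. For instance, $Q=\lim_N\frac1N\sum_{i\le N}s_iJs_iJ$ lies in $\ker\pi_q$: the row/column bounds in the proof of Lemma~\ref{lem:length-averages} give $\|\sum_{i\le N}s_i\otimes Js_iJ\|_{\min}=O(\sqrt N)$. But $P_1QP_1=qP_1$ is not compact, so $Q\notin\mathcal K_q$ by Lemma~\ref{lem:regular-corners}.

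\textbf{What a $C^*$-level route would need.} Your identity $[\ell(f)^*,r(g)]=\langle f,g\rangle Q\in\bbX_q$ is the right observation for such a route, but it must be applied to the creation operators themselves. It shows that $\mathcal T_q=C^*(\ell(h):h\in H)$ and $J\mathcal T_qJ$ commute modulo $\bbX_q$. To get min-continuity and a u.c.p.\ lift, which you would then restrict to $A_q\otimes_{\min}JA_qJ$, you need \emph{nuclearity of $\mathcal T_q$}. This is the Shlyakhtenko-type argument in the Remark following Theorem~\ref{thm:akemann-ostrand-kernel}, and that nuclearity is known only for $|q|<0.44$ \cite{DN93,JSW94}.

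\textbf{How the paper avoids nuclearity.} It verifies the u.c.p.\ approximation criterion \cite[Theorem~7.3]{DP23} directly, using:
\begin{enumerate}
\item the embedding $M\subset\widetilde M=M_q(H^{(0)}_\bbR\oplus H^{(1)}_\bbR)$ with the rotation deformation $\alpha_t$ and the compression $\Phi_t=V_t^*(\cdot)V_t$;
\item the weakly coarse $M_d$-bimodules $Q_dL^2(\widetilde M)$ over the finite-dimensional-variable algebras $M_d=M_q(F_{d,\bbR})$ \cite{DP23,Av11};
\item weak exactness of $M$ from \cite{KN10,Iso12}.
\end{enumerate}
The boundary piece enters through $\Phi_t(R_k)=\sum_{n\geq k}\binom nk\cos^{2(n-k)}(t)\sin^{2k}(t)P_n$, which lies in $\bbX_q$ but is not compact. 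Your proposal has no substitute for this step.
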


\begin{proof}
Put $M=M_q(H_{\bbR})$ and choose increasing finite-dimensional real
subspaces $F_{d,\bbR}\subset H_{\bbR}$ with dense union, setting
$M_d=M_q(F_{d,\bbR})$. Let $E_d:M\to M_d$ be the canonical
trace-preserving conditional expectation, and note that
$E_d\to\operatorname{id}_M$ pointwise in $\|\cdot\|_2$. We shall use \cite[Theorem~7.3]{DP23} to prove the relative biexactness.

Let $H_{\bbR}^{(0)}$ and $H_{\bbR}^{(1)}$ be two orthogonal copies of
$H_{\bbR}$, and let $H^{(0)}$ and $H^{(1)}$ denote their complexifications.
Put
$\widetilde M=M_q(H_{\bbR}^{(0)}\oplus H_{\bbR}^{(1)})$.  Identify
$M=M_q(H_{\bbR}^{(0)}\oplus0)\subset\widetilde M$.  For $k\geq0$, let
$\mathcal L_k\subset L^2(\widetilde M)$ be the closed span of the
tensors containing exactly $k$ letters from $H^{(1)}$, and denote by
$R_k$ the orthogonal projection onto $\mathcal L_k$.  By
\cite[Proposition~8.10 and the proof of Theorem~8.11]{DP23}, using
\cite[Proposition~4.1]{Av11}, for each $d$ there is an integer
$m_d\geq1$ such that the $M_d$--$M_d$ bimodule
$\mathcal H_d=Q_dL^2(\widetilde M)$ is weakly coarse, where
$Q_d=\sum_{k\geq m_d}R_k$.

For $0<t<\pi/2$, let
\[
 U_t=
 \begin{pmatrix}
  \cos(t)&-\sin(t)\\
  \sin(t)&\cos(t)
 \end{pmatrix}
 \in\mathcal O\bigl(H_{\bbR}^{(0)}\oplus H_{\bbR}^{(1)}\bigr),
\]
and let $\alpha_t\in\operatorname{Aut}(\widetilde M)$ be its second
quantization, so that $\|\alpha_t(x)-x\|_2\to0$ as $t\to0$ for every
$x\in M$.  The isometry $V_t:L^2(M)\to L^2(\widetilde M)$,
$V_t\widehat x=\widehat{\alpha_t(x)}$, defines a u.c.p.\ map
\[
 \B(L^2\widetilde M)\ni T\longmapsto
 \Phi_t(T)=V_t^*TV_t\in\B(L^2M).
\]

For $k\geq0$, a direct computation shows
that
\[
 \Phi_t(R_k)P_n
 =\binom nk\cos^{2(n-k)}(t)\sin^{2k}(t)P_n,
\]
where $\binom nk=0$ for $n<k$.  Consequently,
$\Phi_t(R_k)=\sum_{n\geq k}\binom nk
\cos^{2(n-k)}(t)\sin^{2k}(t)P_n$.  Since
\[
 \sum_{n\geq k}\binom nk\cos^{2(n-k)}(t)\sin^{2k}(t)
 =\sin^{2k}(t)\sum_{j\geq0}\binom{j+k}{k}\cos^{2j}(t)
 =\frac1{\sin^2(t)},
\]
the series converges in operator norm and hence
$\Phi_t(R_k)\in\bbX_q$.

\begin{samepage}
We now construct the u.c.p.\ approximation.  Let $e_d$ be the
projection from $L^2M$ onto $L^2M_d$.  Compression by $e_d$ and the
weak containment above give an $M_d$-modular u.c.p.\ map (cf.~\cite[proof of Theorem~7.8]{DP23})
\[
 \Psi_d:\B(L^2M)\longrightarrow
 (JM_dJ)'\cap\B(\mathcal H_d)
\]
such that $\Psi_d(1)=Q_d$ and
$\Psi_d(x)=E_d(x)Q_d=Q_dE_d(x)$ for $x\in M$, where
$\B(\mathcal H_d)=Q_d\B(L^2\widetilde M)Q_d$.  Moreover,
$K_{d,t}:=\Phi_t(1-Q_d)
=\sum_{k=0}^{m_d-1}\Phi_t(R_k)\in\bbX_q$.
\end{samepage}

Fix a state $\omega$ on $\B(L^2M)$ and define a u.c.p.\ map
\[
 \Theta_{d,t}(T)=\Phi_t(\Psi_d(T))+\omega(T)K_{d,t},
 \qquad T\in\B(L^2M).
\]

For $x,y\in M$, write $x_d=E_d(x)$ and $y_d=E_d(y)$.  We claim that
\begin{equation}\label{eq:approximation-estimate}
 \dist_{\|\cdot\|_{\infty,1}}
 \bigl(\Theta_{d,t}(x)-x,
       \mathbb K_{\bbX_q}^{\infty,1}(M)\bigr)
 \leq\|x_d-\alpha_t(x_d)\|_2+\|x_d-x\|_2.
\end{equation}
Moreover, for every $T\in\B(L^2M)$,
\begin{equation}\label{eq:commutator-estimate}
 \dist_{\|\cdot\|_{\infty,1}}
 \bigl([\Theta_{d,t}(T),JyJ],
       \mathbb K_{\bbX_q}^{\infty,1}(M)\bigr)\leq
 2\|T\|\bigl(\|y_d-\alpha_t(y_d)\|_2+\|y-y_d\|_2\bigr).
\end{equation}

Assuming these estimates, fix $\varepsilon>0$ and let
$E\subset F\subset\B(L^2M)$ be finite-dimensional operator systems
with $E\subset M$. Choose $d$ and then $t>0$ so that
\[
 \sup_{x\in(E)_1}\norm{E_d(x)-x}_2<\frac{\varepsilon}{16},
 \qquad
 \sup_{x\in(E)_1}\norm{\alpha_t(E_d(x))-E_d(x)}_2
 <\frac{\varepsilon}{16}.
\]
Let \(r_\tau\) denote the factorization seminorm from
\cite[Section~3.2,Theorem~7.3]{DP23}, and write
\(d_{r_\tau}(T,\mathcal I)=\inf_{A\in\mathcal I}r_\tau(T-A)\).
By \cite[Proposition~3.1]{DKEP22}, one has
$d_{r_\tau}\leq4d_{\norm{\cdot}_{\infty,1}}$. Hence equations
\eqref{eq:approximation-estimate} and
\eqref{eq:commutator-estimate} give
\[
 d_{r_\tau}\bigl(\Theta_{d,t}(x)-x,
 \mathbb K_{\bbX_q}^{\infty,1}(M)\bigr)<\varepsilon,
 \qquad
 d_{r_\tau}\bigl([JxJ,\Theta_{d,t}(T)],
 \mathbb K_{\bbX_q}^{\infty,1}(M)\bigr)<\varepsilon
\]
for every $x\in(E)_1$ and $T\in(F)_1$. Moreover, $M$ is weakly exact
by \cite[Theorem~5.1]{KN10} and \cite[Proposition~4.1.2]{Iso12}.
Therefore \cite[Theorem~7.3]{DP23} shows that $M$ is biexact relative
to $\bbX_q$.

It remains to prove the two estimates.  For $x\in M$ and
$T\in\B(L^2\widetilde M)$, the same computation as in
\cite[Proof of Proposition~8.2]{DKEP22} gives
\begin{align*}
 \|\Phi_t(Tx)-\Phi_t(T)x\|_{\infty,1},\quad
 \|\Phi_t(xT)-x\Phi_t(T)\|_{\infty,1}
 &\leq\|T\|\,\|x-\alpha_t(x)\|_2,\\
 \|\Phi_t(TJxJ)-\Phi_t(T)JxJ\|_{\infty,1},\quad
 \|\Phi_t(JxJT)-JxJ\Phi_t(T)\|_{\infty,1}
 &\leq\|T\|\,\|x-\alpha_t(x)\|_2.
\end{align*}

Fix $x\in M$ and write $x_d=E_d(x)$.  Since
$\Psi_d(x)=x_dQ_d$, the first pair of inequalities gives
\[
 \|\Phi_t(\Psi_d(x))-x_d\Phi_t(Q_d)\|_{\infty,1}
 \leq\|x_d-\alpha_t(x_d)\|_2.
\]
Since $\Phi_t(Q_d)=1-K_{d,t}$, we have
\[
 \Theta_{d,t}(x)-x
 =\bigl(\Phi_t(\Psi_d(x))-x_d\Phi_t(Q_d)\bigr)
  +(x_d-x)+(\omega(x)1-x_d)K_{d,t}.
\]
As the last term belongs to $\mathbb K_{\bbX_q}^{\infty,1}(M)$, this
proves \eqref{eq:approximation-estimate}.

Similarly, fix $y\in M$ and $T\in\B(L^2M)$, and write
$y_d=E_d(y)$.  Since $\Psi_d(T)$ commutes with $Jy_dJ$, the second
pair of inequalities gives
\[
 \|[\Phi_t(\Psi_d(T)),Jy_dJ]\|_{\infty,1}
 \leq2\|T\|\,\|y_d-\alpha_t(y_d)\|_2.
\]
Moreover,
$\|[\Phi_t(\Psi_d(T)),J(y-y_d)J]\|_{\infty,1}
\leq2\|T\|\,\|y-y_d\|_2$, while
$\omega(T)[K_{d,t},JyJ]$ belongs to
$\mathbb K_{\bbX_q}^{\infty,1}(M)$; together these give the claimed
bound, which is \eqref{eq:commutator-estimate}.
\end{proof}

\begin{samepage}
\begin{proof}[Proof of Theorem~\ref{thm:akemann-ostrand-kernel}]
Recall that
$\mathbb K_{\bbX_q}^{\infty,1}(M)
=\overline{\bbX_q}^{_{\|\cdot\|_{\infty,1}}}$, and hence
$\Xcal=\Ccal\cap\mathbb K_{\bbX_q}^{\infty,1}(M)$.
By Proposition~\ref{prop:relative-biexactness} and
\cite[Theorem~7.19]{DP23}, the multiplication map extends to a unital
\(\ast\)-homomorphism
\[
 \nu_q:M\otimes_{\min}JMJ\ni x\otimes JyJ\longmapsto
 xJyJ+\Xcal\in\Ccal/\Xcal.
\]
Note that \(\nu_q\) is surjective and that \(M\) and \(JMJ\) are simple
\(C^\ast\)-algebras.  Hence \(M\otimes_{\min}JMJ\) is simple by
\cite[Corollary]{Tak64}.  Thus \(\nu_q\) is a
\(\ast\)-isomorphism, and we identify
\(M\otimes_{\min}JMJ=\Ccal/\Xcal\) through \(\nu_q\), from which the
conclusion follows.
\end{proof}
\end{samepage}

\begin{remark}
    We note that if we consider the $C^*$-analogy of the Akemann-Ostrand map $ C_q\to A_q\otimes_{\min }JA_qJ $, then one can show that its kernel is precisely $X_q = \overline{ \bigcup_{n\geq 0} J_n^{(q)} }$ assuming the nuclearity of the $q$-CCR algebra $ C^*( \ell(h):h\in H )  $, following the proof of \cite[Theorem~4.2]{Shl04}. (The nuclearity of $ C^*( \ell(h):h\in H )  $ can be shown for $|q|<\sqrt{2}-1$ by \cite{JSW94}, and more generally for $|q|<0.44$ by \cite{DN93} with an inductive limit argument.) Therefore, $ X_q $ can be considered as the $C^*$-analogy of $ \mathcal X_q $. Since this fact will not be used below, we omit its proof.
\end{remark}

\section{Structure of a $C^\ast$-analogue of the Akemann--Ostrand kernel}
\label{sec:ideal-filtration-first-layer}

\subsection{An ideal filtration}
\label{subsec:ideal-filtration}

We begin by studying the ideal structure of \(X=X_q\). In this section, we show that the length projections
\(P_n\) determine a natural filtration by the ideals \(J_n\), whose first
layer will later play a central role in recovering the parameter \(q\).

\begin{theorem}\label{thm:length-filtration-main} The ideals $J_n$ satisfies these properties:
\begin{enumerate}
\item\label{item:length-filtration-chain}
One has
\(\K(\F)=J_0\subset J_1\subset J_2\subset\cdots\).

\item\label{item:length-filtration-minimal-projections}
For every \(n\geq1\), \(p_n=P_n+J_{n-1}\) is a nonzero
minimal projection in \(C/J_{n-1}\).  Moreover, there exists
a Hilbert space \(E_n\) such that
\(J_n/J_{n-1}\cong\K(E_n)\).

\item\label{item:length-filtration-least-ideal}
For every \(n\geq1\), \(J_n/J_{n-1}\) is the least
nonzero ideal of \(X/J_{n-1}\).
\end{enumerate}
\end{theorem}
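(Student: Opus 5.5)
The plan is to treat the three parts in order. Parts~(1) and~(2) come from normal ordering of words in the creation and annihilation operators. Part~(3) comes from the averaging maps of Lemma~\ref{lem:length-averages}, which move from length $k$ down to length $k-1$.

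\emph{Part (1).} $P_0$ is the rank-one projection onto $\bbC\Omega$. Moreover, $C$ acts irreducibly on $\F$: its weak closure contains $M$ and $JMJ=M'$, and $M$ is a factor. Hence the ideal of $C$ generated by $P_0$ is $\K(\F)$, so $J_0=\K(\F)$. For the chain, Lemma~\ref{lem:length-averages}(\ref{item:length-averages-convergence}) gives $q^{n-1}P_{n-1}=\lim_N N^{-1}\sum_i s_iP_nJs_iJ\in J_n$. Since $q\neq0$, we get $P_{n-1}\in J_n$, hence $J_{n-1}\subset J_n$.

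\emph{Part (2): minimality.} First note that $P_n s(f)P_{n-1}=\ell(f)P_{n-1}$ and $P_{n-1}s(f)P_n=\ell(f)^*P_n$, and similarly with $r$. Hence every length-changing piece of the form $(\cdot)P_{k}$ or $P_k(\cdot)$ built this way lies in $C$. Now let $w$ be a word in the $s(h)$'s and $Js(h)J$'s. Expanding $P_nwP_n$ and inserting length projections between letters writes it as a sum over length paths from $n$ to $n$. Any path passing through level $n-1$ factors through $P_{n-1}$ and so lies in $J_{n-1}$. A path staying at levels $\geq n$ is a product of $\ell,r$'s and $\ell^*,r^*$'s of total degree zero. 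Using the relations $\ell(f)^*\ell(g)=\langle f,g\rangle+q\ell(g)\ell(f)^*$, $\ell(f)^*r(g)=r(g)\ell(f)^*+\langle f,g\rangle Q$ and their analogues, I normal order it (annihilators to the right). On $P_n$, this leaves a scalar multiple of $P_n$, coming from $Q P_n=q^nP_n$ and the inner products, plus terms in which an annihilator acts first, hence which factor through $P_{n-1}$. Therefore $P_nCP_n\subset\bbC P_n+J_{n-1}$, and by density this holds for the norm closure. So $p_n$ is minimal or zero.

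\emph{Part (2): nonvanishing and the quotient.} For $p_n\neq0$, I would produce a state on $C$ that vanishes on $J_{n-1}$ but not at $P_n$. The candidate is a weak$^*$ limit of the vector states at $\xi_k=h_{k+1}\otimes\cdots\otimes h_{k+n}$, normalized, with the $h_j$ orthonormal. For a fixed word $a$, the vector $P_{n-1}a\xi_k$ must annihilate one of the fresh letters. I would check that this forces $\langle aP_{n-1}b\,\xi_k,\xi_k\rangle\to0$, using that fresh directions are asymptotically orthogonal to everything in a finite word. Once $p_n$ is a nonzero minimal projection, $J_n/J_{n-1}$ is the ideal of $C/J_{n-1}$ generated by $p_n$. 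The standard fact that the ideal generated by a minimal projection is elementary (represent it on $E_n=\overline{(C/J_{n-1})p_n}$) then gives $J_n/J_{n-1}\cong\K(E_n)$.

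\emph{Part (3): a downward step.} A closed ideal of the closed ideal $X\lhd C$ is an ideal of $C$, so any ideal $I$ of $X$ is invariant under $y\mapsto s_iyJs_iJ$. Define $\Phi_N(y)=N^{-1}\sum_{i\le N}s_iyJs_iJ$. The key lemma is asymptotic bimodularity:
\[
\norm{\Phi_N(ayb)-a\Phi_N(y)b}\to0\qquad\text{for fixed }a,b\in C .
\]
I would prove this on words $a,b$ by commuting $s_i$ and $Js_iJ$ past the finitely many fixed letters with the relations above. The error terms are averages over $i$ of products like $\ell_i\cdots\ell_i^*$ or $\ell_i\cdots r_i$, and they are controlled by the row and column bounds $\norm{R_\ell},\norm{R_r}\le c_q^{1/2}$ exactly as in the proof of Lemma~\ref{lem:length-averages}. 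Combined with $\Phi_N(P_k)\to q^{k-1}P_{k-1}$, bimodularity gives that every limit point of $\Phi_N(J_{k-1})$ lies in $J_{k-2}$. The downward step then follows. If $P_k\in I+J_{k-1}$, write $P_k=y+j$ and apply $\Phi_N$; letting $N\to\infty$ gives $q^{k-1}P_{k-1}\in I+J_{k-2}$, using that the sum of closed ideals is closed.

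\emph{Part (3): conclusion.} Let $I\lhd X$ with $I\not\subset J_{n-1}$. Because $P_{\le m}$ is an approximate identity and $P_{\le m}IP_{\le m}\subset I\cap J_m$, the union $\bigcup_m(I\cap J_m)$ is dense in $I$. Hence there is a least $m\ge n$ with $I\cap J_m\not\subset J_{m-1}$; otherwise, inductively, $I\cap J_m\subset J_{n-1}$ for all $m$. Simplicity of $J_m/J_{m-1}\cong\K(E_m)$ gives $P_m\in I+J_{m-1}$. The downward step, applied $m-n$ times, gives $P_n\in I+J_{n-1}$, hence $J_n\subset I+J_{n-1}$. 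Together with part~(2), this shows $J_n/J_{n-1}$ is the least nonzero ideal of $X/J_{n-1}$.

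I expect the main obstacles to be the asymptotic bimodularity estimate for $\Phi_N$ and the proof that $P_n\notin J_{n-1}$. For the latter, if the fresh-vector states prove awkward, I would fall back on showing directly that $P_nJ_{n-1}P_n$ consists of operators that become small on those fresh vectors.
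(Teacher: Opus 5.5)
Your Parts~(1) and~(2) are sound and essentially follow the paper. Normal ordering is how the paper shows that $D_n$ is dense in $J_n$, and your fresh-letter vector states are a sequential version of its $h^{\otimes n}$ argument for $\dist(P_n,J_{n-1})=1$.

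The gap is the key lemma of Part~(3): the asymptotic bimodularity $\norm{\Phi_N(ayb)-a\Phi_N(y)b}\to0$ is false. For arbitrary $a\in C$ it fails trivially, since $\Phi_N(P_1)\to P_0$ while $P_1\Phi_N(P_1)\to0$. It also fails for a single generator. Take a unit vector $f\in H_{\bbR}$ orthogonal to all $h_i$, and set $a=s(f)$, $y=P_1$, $b=1$. By Lemma~\ref{lem:length-averages}, $s(f)\Phi_N(P_1)\Omega\to s(f)\Omega=f$, whereas
\[
 \Phi_N(s(f)P_1)\Omega=\frac1N\sum_{i=1}^Ns_i(f\otimes h_i)
 =qf+\frac1N\sum_{i=1}^Nh_i\otimes f\otimes h_i\longrightarrow qf .
\]
So the difference has norm tending to $1-q\neq0$; in the limit $\Phi(s(f)P_1)=q\,\ell(f)P_0$ but $s(f)\Phi(P_1)=\ell(f)P_0$. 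The reason is that not all error terms are controlled by the row/column bounds. Moving $\ell_i^*$ past a fixed letter of $a$ produces sums $\frac1N\sum_i\ell_i^*Zr_i$, for which the column bounds give only $\norm{C_\ell}\norm{C_r}/N\leq c_q$. These are exactly the terms that produce $Q$ in Lemma~\ref{lem:length-averages}. Since $\ell_i^*\ell(f)=q\,\ell(f)\ell_i^*$ for $f\perp h_i$, each such commutation costs a factor $q$ rather than a vanishing error.

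The consequence you actually need, $\dist(\Phi_N(j),J_{k-2})\to0$ for $j\in J_{k-1}$, is still true, but it must be proved by a twisted computation on normal-ordered generators. Take $y_0=\ell(\xi)r(\eta)P_m\ell(\xi')^*r(\eta')^*$ with $m\leq k-1$ and all letters in a finite-dimensional $K$, and choose the family $(h_i)$ in $H_{\bbR}\ominus K_{\bbR}$. Three of the four terms of $s_iy_0Js_iJ$ average to zero by the row/column bounds. For the fourth, use $\ell_i^*\ell(\xi)=q^{|\xi|}\ell(\xi)\ell_i^*$, $\ell_i^*r(\eta)=r(\eta)\ell_i^*$, $\ell(\xi')^*r(\eta')^*r_i=q^{|\eta'|}r_i\ell(\xi')^*r(\eta')^*$ and $\ell_i^*r_i=r_i\ell_i^*+Q$. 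These give
\[
 \Phi_N(y_0)\longrightarrow q^{|\xi|+|\eta'|+m-1}\,\ell(\xi)r(\eta)P_{m-1}\ell(\xi')^*r(\eta')^*\in J_{k-2},
\]
which is zero if $m=0$. Two further facts finish the step: $\sup_N\norm{\Phi_N}\leq4c_q$, and such $y_0$ span a dense subspace of $J_{k-1}$ by your normal ordering. Approximate $j$ within $\varepsilon$ first, then choose the family. This yields $\dist(q^{k-1}P_{k-1},I+J_{k-2})\leq C\varepsilon$, and your downward step and the rest of Part~(3) go through.

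Once repaired, your route genuinely differs from the paper's. The paper writes a positive element of the ideal as $P_s+T$ with $T\in P_sJ_{s-1}P_s$. It then compresses by the single element $P_s\ell(h)P_{s-1}\in X$ for one fresh $h$, using $\ell(h)^*P_s\ell(h)=P_{s-1}+q\,\ell(h)P_{s-2}\ell(h)^*$ and the same fresh-direction normal ordering (Lemma~\ref{lem:length-fresh-direction}). Your two-sided average preserves every ideal of $C$ automatically, so it needs no positivity or corner reduction.
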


\begin{lemma}\label{lem:length-filtration}
We have the following:
\begin{enumerate}
\item\label{item:length-filtration-increasing}

\(\K(\F)=J_0\subset J_1\subset J_2\subset\cdots\).

\item\label{item:length-filtration-dense-subalgebra}
For every \(n\geq0\), the set \(D_n\) is a dense
\(\ast\)-subalgebra of \(J_n\), where
\[
 D_n=\operatorname{span}\left\{
 \ell(\xi)r(\eta)P_k\ell(\xi')^*r(\eta')^*:0\leq k\leq n,
 \ \xi,\eta,\xi',\eta'\in
 \bigcup_{m\geq0}H^{\otimes_{\mathrm{alg}}m}\right\}.
\]

\item\label{item:length-filtration-approximate-identity}
The sequence \(P_{\leq N}=\sum_{k=0}^NP_k\), \(N\geq0\), is an
approximate identity for \(X\).
\end{enumerate}
\end{lemma}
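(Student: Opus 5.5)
We prove the three items in the order (3), (1), (2), since the approximate identity is the cheapest and feeds into the others.

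\emph{Item (3).} We already know $X=C\cap\bbX$ and $P_{\leq N}\in J_N\subset X$. Since $P_{\leq N}$ is an approximate identity for $\bbX$, and $X\subset\bbX$, we get $\|P_{\leq N}x-x\|\to0$ and $\|xP_{\leq N}-x\|\to0$ for every $x\in X$. So item (3) is immediate from the facts recorded in Notation~\ref{not:q-gaussian-notation}.

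\emph{Item (1).} The inclusions $J_{n-1}\subset J_n$ follow from Lemma~\ref{lem:length-averages}(2). Indeed, $q^{n-1}P_{n-1}$ is a norm limit of $\frac1N\sum_i s_iP_nJs_iJ\in J_n$, and $q\neq0$, so $P_{n-1}\in J_n$; hence $J_{n-1}\subset J_n$.

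For $J_0=\K(\F)$, first note that $P_0$ is the rank-one projection onto $\bbC\Omega$, so $J_0\subset\K(\F)$. For the reverse inclusion we need irreducibility of $C$. The operators $\ell(\xi)r(\eta)P_0\ell(\xi')^*r(\eta')^*$ equal rank-one operators of the form $|\ell(\xi)r(\eta)\Omega\rangle\langle\cdots|$, and the vectors $\ell(\xi)r(\eta)\Omega$ span a dense subspace. Provided the creation operators lie in $C$ (handled in item (2)), all finite-rank operators with dense-span vectors lie in $J_0$, so $J_0=\K$.

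\emph{Item (2).} This is the main step, and it has two parts.

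\textbf{(a) $D_n\subset J_n$.} We must show $\ell(h)P_k$ and $r(h)P_k$ lie in $J_n$ for $k\leq n$. The plan is to express $\ell(h)P_k=(s(h)-\ell(h)^*)P_k$, which requires controlling $\ell(h)^*P_k$. Since $\ell(h)^*P_k=P_{k-1}\ell(h)^*P_k$, we can write
\[
P_{k+1}s(h)P_k=\ell(h)P_k,
\]
and $P_{k+1}\in C$. This gives $\ell(h)P_k\in C P_k\subset J_n$, and similarly on the right, and more generally $\ell(\xi)r(\eta)P_k=P_{k+|\xi|+|\eta|}\,W\!\cdot\! W_R\,P_k$-type products. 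Adjoints give the right-hand factors.

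\textbf{(b) $D_n$ is a dense $\ast$-subalgebra.} Closure under adjoint is clear. For products we must reduce $P_k\ell(\xi')^*r(\eta')^*\ell(\xi)r(\eta)P_j$ back into the span. Using the commutation relations, annihilators pass through creators, producing scalars, $Q$-factors and normal-ordered terms. Because $Q$ acts as a scalar on each $P_m$, every term becomes a creation-times-$P_m$-times-annihilation word with $m\leq\max(k,j)\le n$; here we use $P_k\ell(f)=\ell(f)P_{k-1}$ to move projections.

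For density, it suffices to show that $D_n$ absorbs $C$ up to closure: $s(h)D_n\subset D_n$ and $Js(h)J\,D_n\subset D_n$ (again via the commutation relations and $\ell(h)^*P_k=P_{k-1}\ell(h)^*$ rewritten as annihilators on the right). Then $\overline{D_n}$ is a closed two-sided ideal of $C$ containing $P_n$ (take $k=n$ with trivial words), hence $\overline{D_n}\supset J_n$.

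The main obstacle is the normal-ordering bookkeeping in part (b): showing that products and the actions of $s(h)$ stay inside $D_n$ with length index at most $n$. The key tool there is the rewriting $\ell(f)^*\ell(\xi)r(\eta)P_k$ as a finite sum of creation words times $P_{k}$ or $\ell(f)^*P_k=\ell\cdots P_{k-1}$-type terms.
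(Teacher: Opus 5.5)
Your proposal is correct and is essentially the paper's proof. Items (1) and (2) go exactly as in the paper: top-degree compressions such as $P_{k+1}s(h)P_k=\ell(h)P_k$ give $D_n\subset J_n$, and normal ordering shows that $D_n$ is a $\ast$-algebra invariant under multiplication by polynomials in $s(h),Js(h)J$, so $\overline{D_n}$ is an ideal of $C$ containing $P_n$. For (3) you use the recorded inclusion $X\subset\bbX$ and the fact that $P_{\leq N}$ is an approximate unit for $\bbX$, rather than the paper's density of $D_n$; this is equally valid.

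The only slip is the phrase ``provided the creation operators lie in $C$'' in (1). You neither need nor obtain $\ell(h)\in C$; what (2)(a) gives is $\ell(\xi)r(\eta)P_0\in CP_0$, and that already places the rank-one operators $\zeta\mapsto\langle\xi'\otimes\eta',\zeta\rangle\,\xi\otimes\eta$ in $J_0$.
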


\begin{proof}
\emph{(1)} This is immediate from
part~(\ref{item:length-averages-convergence}) of
Lemma~\ref{lem:length-averages} and the definition of \(J_0\).

\emph{(2)} Note that for elementary tensors \(\xi,\eta\), if
\(j=|\xi|+|\eta|+k\), then
\(P_jW(\xi)W_R(\eta)P_k=\ell(\xi)r(\eta)P_k\).  Thus
part~(1) gives \(D_n\subset J_n\).

Let \(x,y\) be polynomials in \(s(h)\) and \(Js(h)J\),
where \(h\in H_{\bbR}\).  Observe that
the commutation relations between $P_n$, $\ell(h) $, and $r(g) $ show that
\(xP_ny\in D_n\).
It follows that \(D_n\) is dense in \(J_n\).  The same
relations show that \(D_n\) is a \(\ast\)-subalgebra.

\emph{(3)} For every \(T\in D_n\), one has
\(P_{\leq N}T=T=TP_{\leq N}\) for all sufficiently large \(N\).
Since \(P_{\leq N}\in X\), the result follows from part~(2)
and \(X=\overline{\bigcup_{n\geq0}J_n}\).
\end{proof}

\begin{lemma}\label{lem:length-fresh-direction} The following
hold.
\begin{enumerate}
\item\label{item:fresh-direction-distance}
For every \(n\geq1\), one has
\(\operatorname{dist}(P_n,J_{n-1})=1\).

\item\label{item:fresh-direction-compression}
For every \(n\geq2\), \(T\in P_nJ_{n-1}P_n\), and
\(\varepsilon>0\), there exists a unit vector
\(h\in H_{\bbR}\) such that
\[
 \operatorname{dist}\bigl(
   \ell(h)^*T\ell(h),J_{n-2}
 \bigr)<\varepsilon.
\]
\end{enumerate}
\end{lemma}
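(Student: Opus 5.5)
Both parts will be proved with the dense subalgebras $D_n$ from Lemma~\ref{lem:length-filtration}(2). The key feature is that a generator $\ell(\xi)r(\eta)P_k\ell(\xi')^*r(\eta')^*$ involves only finitely many vectors of $H$, while $H$ is infinite-dimensional. So every element of $D_{n-1}$ is, up to $\varepsilon$, an operator $T$ built from creation and annihilation operators of vectors in some finite-dimensional subspace $F\subset H$ (which we may take to be $J$-invariant), together with the projections $P_k$, $k\le n-1$.

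For part~(1), since $J_{n-1}$ is the closure of $D_{n-1}$, it suffices to show $\norm{P_n-T}\geq 1$ for every $T\in D_{n-1}$. Fix such a $T$ with its finite-dimensional $F$. Take an orthonormal family $g_1,\dots,g_n\in H_{\bbR}$ orthogonal to $F$, and the unit vector $\zeta=g_1\otimes\cdots\otimes g_n/\norm{\cdot}_q\in H_q^{\otimes n}$, so that $P_n\zeta=\zeta$.

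\emph{Key claim:} $T\zeta=0$. For each generator, $\ell(\eta')^*$ and $r(\eta')^*$ with $\eta'\in F^{\otimes m}$, $m\ge1$, annihilate tensors all of whose letters are orthogonal to $F$; this follows from the commutation relations, since $\ell(f)^*$ produces inner products $\langle f,g_i\rangle=0$. Generators with $\xi'=\eta'$ empty reduce to $\ell(\xi)r(\eta)P_k$ with $k\le n-1$, and these kill $\zeta$ because $|\zeta|=n$. Hence $\norm{(P_n-T)\zeta}=1$.

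It remains to handle the mixed case, where some annihilators survive on other tensors. Because $\zeta$ has no letters in $F$, every annihilator with a letter in $F$ gives zero, so the argument is uniform. Thus $\operatorname{dist}(P_n,D_{n-1})\ge1$, and the reverse inequality is trivial because $0\in J_{n-1}$.

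For part~(2), first approximate $T\in P_nJ_{n-1}P_n$ by $P_nT_0P_n$ with $T_0\in D_{n-1}$ built over a finite-dimensional $F$. Then choose a unit vector $h\in H_{\bbR}\perp F$ and compute $\ell(h)^*P_n\,\ell(\xi)r(\eta)P_k\ell(\xi')^*r(\eta')^*P_n\,\ell(h)$. We have $P_n\ell(h)=\ell(h)P_{n-1}$. We then move $\ell(\xi')^*r(\eta')^*$ past $\ell(h)$ using the relations $\ell(f)^*\ell(h)=\langle f,h\rangle+q\ell(h)\ell(f)^*$ and $r(f)^*\ell(h)=\ell(h)r(f)^*+\langle f,h\rangle Q$; since the inner products vanish, this yields $\ell(h)$ times the annihilators up to powers of $q$ and $Q$. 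Symmetrically, on the left we get $\ell(h)^*$ in front. The generator therefore becomes $\ell(h)^*\,\ell(\xi)r(\eta)P_{k+1}\ell(\xi')^*r(\eta')^*\,\ell(h)$-type terms, rewritten as $c\,\ell(\xi)r(\eta)\,\ell(h)^*P_{k+1}\ell(h)\,\ell(\xi')^*r(\eta')^*$ with $c$ a scalar or a $Q$-power.

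\emph{Main obstacle:} the resulting middle factor $\ell(h)^*P_{k+1}\ell(h)=\ell(h)^*\ell(h)P_k$ has the same length index $k$ rather than a shift down. So the choice $h\perp F$ naively does not lower the index, and I expect the real argument to need more care. My first attempt would be to reconsider: the statement asks for membership in $J_{n-2}$, so the top terms with $k=n-1$ must be shown to vanish after compression. With $T\in P_nJ_{n-1}P_n$ and $k=n-1$, a generator has $|\xi|+|\eta|=1=|\xi'|+|\eta'|$, e.g.\ $\ell(f)P_{n-1}\ell(f')^*$. Then $\ell(h)^*\ell(f)P_{n-1}\ell(f')^*\ell(h)=q^2\ell(f)\ell(h)^*P_{n-1}\ell(h)\ell(f')^*$, and $\ell(h)^*\ell(h)P_{n-2}$ lies in $C^*(\ell(h)^*\ell(h))P_{n-2}\subset J_{n-2}$ once the $P_{n-1}$ is pushed through. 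Terms with $k<n-1$ already land in $J_{n-2}$ by the same pushing, using Lemma~\ref{lem:length-averages}(2) to place the index-$(n-2)$ projections in $J_{n-2}$. I would verify this bookkeeping for all four left/right combinations, and check that the $Q$-factors (which are in $C$) cause no trouble. This bookkeeping is where I expect the difficulty to be.
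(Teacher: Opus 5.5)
Your part~(1) is the paper's argument; the paper uses $h^{\otimes n}$ for a single unit vector $h\perp K_{\bbR}$ rather than $g_1\otimes\cdots\otimes g_n$, which makes no difference. Part~(2) follows the paper's plan: approximate by $P_nT_0P_n$ with $T_0\in D_{n-1}$, choose a unit $h\in H_{\bbR}$ orthogonal to the finite support, and commute $\ell(h)$ and $\ell(h)^*$ inward.

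However, the ``main obstacle'' you identify comes from an index slip, not a real difficulty. You use $P_n\ell(h)=\ell(h)P_{n-1}$ and then the relations $r(f)^*\ell(h)=\ell(h)r(f)^*$ and $\ell(f)^*\ell(h)=q\,\ell(h)\ell(f)^*$. No $Q$-terms arise, since $\langle f,h\rangle=0$. These moves bring $\ell(h)$ to the immediate right of $P_k$ --- no $P_{k+1}$ ever appears --- and then $P_k\ell(h)=\ell(h)P_{k-1}$. Doing the same on the left gives the paper's formula
\[
 \ell(h)^*P_n\ell(\xi)r(\eta)P_k\ell(\xi')^*r(\eta')^*P_n\ell(h)
 =q^{|\xi|+|\xi'|}P_{n-1}\ell(\xi)r(\eta)\,\ell(h)^*\ell(h)P_{k-1}\,\ell(\xi')^*r(\eta')^*P_{n-1}.
\]
So the middle index drops by one for every $k$, and the term vanishes when $k=0$. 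Since $k\le n-1$, each compressed generator lies in $J_{k-1}\subset J_{n-2}$.

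Your hand computation for $k=n-1$ is exactly this formula specialized. So your case split is valid but unnecessary, and all four left/right combinations are handled at once. When you conclude membership in $J_{n-2}$, note that $\ell(h)^*\ell(h)$ itself need not lie in $C$, so the phrase $C^*(\ell(h)^*\ell(h))P_{n-2}\subset J_{n-2}$ needs justification. What you actually need is $\ell(h)^*\ell(h)P_{k-1}=(P_{k-1}s(h)P_k)(P_ks(h)P_{k-1})\in C$. For the outer factors of the nonvanishing terms, use $P_{n-1}\ell(\xi)r(\eta)P_{k-1}=P_{n-1}W(\xi)W_R(\eta)P_{k-1}\in C$.

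One detail you left implicit: $h$ is chosen only after $T_0$, so the approximation tolerance must be fixed in advance. This uses the uniform bound $\norm{\ell(h)}^2\leq(1-|q|)^{-1}$. The paper takes $\norm{T-T_0}<(1-|q|)\varepsilon$, which gives $\operatorname{dist}(\ell(h)^*T\ell(h),J_{n-2})\leq\norm{\ell(h)}^2\norm{T-T_0}<\varepsilon$.
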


\begin{proof}
\emph{(1)} By part~(\ref{item:length-filtration-dense-subalgebra})
of Lemma~\ref{lem:length-filtration} and the density of elementary
tensors, it suffices to consider
\(T\in D_{n-1}\) whose tensor factors lie in the complexification
\(K\) of a finite-dimensional real subspace
\(K_{\bbR}\subset H_{\bbR}\).  Choose a unit vector
\(h\in H_{\bbR}\ominus K_{\bbR}\), and put
\(\zeta_n=h^{\otimes n}/\norm{h^{\otimes n}}_q\).  Then
\(T\zeta_n=0\), whereas \(P_n\zeta_n=\zeta_n\); the conclusion
follows.

\emph{(2)} Choose
\[
 T_0=P_n\left(
 \sum_{j=1}^m\lambda_j
 \ell(\xi_j)r(\eta_j)P_{k_j}
 \ell(\xi'_j)^*r(\eta'_j)^*
 \right)P_n\in P_nD_{n-1}P_n
\]
such that \(0\leq k_j\leq n-1\) and
\(\norm{T-T_0}<(1-|q|)\varepsilon\), where all the displayed
tensors are elementary tensors with factors in \(H_{\bbR}\).  Choose
a finite-dimensional real subspace \(K_{\bbR}\subset H_{\bbR}\)
containing all tensor factors occurring in the displayed expression
for \(T_0\), let \(K\) be its complexification, and choose a unit
vector \(h\in H_{\bbR}\ominus K_{\bbR}\).

For \(1\leq j\leq m\), since \(h\perp K\), the commutation relations show that the compression of the
\(j\)-th summand is
\[
 \lambda_jq^{|\xi_j|+|\xi'_j|}P_{n-1}\ell(\xi_j)r(\eta_j)
 \ell(h)^*\ell(h)P_{k_j-1}
 \ell(\xi'_j)^*r(\eta'_j)^*P_{n-1}.
\]
Since \(k_j-1\leq n-2\), the proof of
part~(\ref{item:length-filtration-dense-subalgebra}) of
Lemma~\ref{lem:length-filtration} shows that
\(\ell(h)^*T_0\ell(h)\in J_{n-2}\).  Since
\(\norm{\ell(h)}^2\leq(1-|q|)^{-1}\), we have
\(\operatorname{dist}(\ell(h)^*T\ell(h),J_{n-2})
\leq\norm{\ell(h)}^2\norm{T-T_0}<\varepsilon\).
\end{proof}

\begin{proof}[Proof of Theorem~\ref{thm:length-filtration-main}]
\emph{(1)} This is part~(\ref{item:length-filtration-increasing}) of
Lemma~\ref{lem:length-filtration}.

\emph{(2)} Fix \(n\geq1\), put \(B=C/J_{n-1}\), and let
\(p_n=P_n+J_{n-1}\).  Part~(\ref{item:fresh-direction-distance}) of
Lemma~\ref{lem:length-fresh-direction} gives \(p_n\ne0\).

To prove that \(p_n\) is minimal, it suffices to show that
\(P_nCP_n\subset\bbC P_n+J_{n-1}\).  Since
\(P_nCP_n=P_nJ_nP_n\) and \(D_n\) is dense in
\(J_n\) by part~(\ref{item:length-filtration-dense-subalgebra}) of
Lemma~\ref{lem:length-filtration}, it
suffices to consider
\[
 T=\ell(\xi)r(\eta)P_k\ell(\xi')^*r(\eta')^*,
 \qquad 0\leq k\leq n.
\]
If \(k<n\), then \(T\in D_{n-1}\subset J_{n-1}\).  If
\(k=n\), then
\(P_nTP_n=0\) unless
\(|\xi|+|\eta|=|\xi'|+|\eta'|=0\), in which case
\(P_nTP_n\in\bbC P_n\).  Therefore
\(P_nCP_n\subset\bbC P_n+J_{n-1}\), and hence
\(p_nBp_n=\bbC p_n\).  Thus \(p_n\) is minimal.

Since \(p_n\in J_n/J_{n-1}\), one has
\(Bp_n=(J_n/J_{n-1})p_n\).  Set
\(E_n=\overline{(J_n/J_{n-1})p_n}\) and equip it with the
Hilbert-space inner product determined for
\(x,y\in J_n/J_{n-1}\) by
\(p_nx^*yp_n=\langle xp_n,yp_n\rangle p_n\).  Under left
multiplication on \(E_n\), for \(x,y,z\in J_n/J_{n-1}\), one has
\((xp_ny^*)(zp_n)=xp_n\langle yp_n,zp_n\rangle\).  Thus \(xp_ny^*\)
acts as the rank-one operator associated with the vectors \(xp_n\) and \(yp_n\).
Since the linear span of such elements is dense in \(J_n/J_{n-1}\),
left multiplication identifies \(J_n/J_{n-1}\) with \(\K(E_n)\).

\emph{(3)} Fix \(n\geq1\), and let
\(0\ne I\lhd X/J_{n-1}\).  Let
\(\pi_n:X\to X/J_{n-1}\) be the quotient map, put
\(\overline I=\pi_n^{-1}(I)\), and then
\(J_{n-1}\subsetneq\overline I\subset X\).
For \(x_0\in\overline I\setminus J_{n-1}\), as
\((P_{\leq N})_N\) is an approximate identity for \(X\) by
part~(\ref{item:length-filtration-approximate-identity}) of
Lemma~\ref{lem:length-filtration}, one has
\(P_{\leq N}x_0\to x_0\), with
\(P_{\leq N}x_0\in\overline I\cap J_N\).  Hence there is a least
\(s\geq n\) such that
\(\overline I\cap J_s\not\subset J_{n-1}\).  We claim that
\(s=n\), from which the result follows.
Indeed, by minimality of \(s\),
\(((\overline I\cap J_s)+J_{s-1})/J_{s-1}\) is a nonzero
ideal of \(J_s/J_{s-1}\), and hence equals
\(J_s/J_{s-1}\) by part~(2).  Thus, once \(s=n\), the
equality \(\pi_n(\overline I)=I\) gives
\(J_n/J_{n-1}\subset I\).

Now suppose \(s>n\) and choose \(x\in\overline I\cap J_s\) such
that \(x-P_s\in J_{s-1}\), and put \(y=P_sx^*xP_s\).  Then
\(y\in\overline I\), \(y\geq0\), \(y=P_syP_s\), and
\(y-P_s\in J_{s-1}\).  Thus \(y=P_s+T\) for some
\(T\in P_sJ_{s-1}P_s\).
Given \(\varepsilon>0\), choose a unit vector
\(h_\varepsilon\in H_{\bbR}\) as in
part~(\ref{item:fresh-direction-compression}) of
Lemma~\ref{lem:length-fresh-direction}, and put
\(z_\varepsilon=\ell(h_\varepsilon)^*y
\ell(h_\varepsilon)\).  Since \(y=P_s+T\), one has
\(z_\varepsilon=\ell(h_\varepsilon)^*P_s
\ell(h_\varepsilon)+\ell(h_\varepsilon)^*T
\ell(h_\varepsilon)\).  One computes that
\[
 \ell(h_\varepsilon)^*P_s\ell(h_\varepsilon)-P_{s-1}
 =q\ell(h_\varepsilon)P_{s-2}\ell(h_\varepsilon)^*
 \in J_{s-2},
\]
whereas part~(\ref{item:fresh-direction-compression}) of
Lemma~\ref{lem:length-fresh-direction} gives
\(\operatorname{dist}(\ell(h_\varepsilon)^*T
\ell(h_\varepsilon),J_{s-2})<\varepsilon\).  Consequently,
\(\operatorname{dist}(z_\varepsilon-P_{s-1},J_{s-2})
<\varepsilon\).

Note that \(P_s\ell(h_\varepsilon)P_{s-1}
=P_ss(h_\varepsilon)P_{s-1}\in X\).  Since \(y=P_syP_s\),
\(z_\varepsilon=(P_s\ell(h_\varepsilon)P_{s-1})^*y
(P_s\ell(h_\varepsilon)P_{s-1})\in\overline I\).

The preceding estimate and the closedness of
\((\overline I+J_{s-2})/J_{s-2}\) imply that there exists
\(z\in\overline I\) such that
\(z+J_{s-2}=P_{s-1}+J_{s-2}\).  It then follows from
part~(\ref{item:fresh-direction-distance}) of
Lemma~\ref{lem:length-fresh-direction} that
\(z\in J_{s-1}\setminus J_{s-2}\), from which the claim
follows, as \(s-2\geq n-1\).
\end{proof}

\subsection{Approximation for minimal projections}
\label{subsec:first-layer-minimal-projections}

The filtration in Section~\ref{subsec:ideal-filtration} identifies
\(J_1\) as the least noncompact ideal of
\(X\). In the proof of the main theorem, this
forces the image of \(P_1\) under an isomorphism into the first layer
of the target algebra, though it need not be the target \(P_1\).
We therefore turn to \(J\)-invariant minimal projections in
\(J_1/\K(\mathcal F_q(H))\), for which
Proposition~\ref{prop:first-layer-real-minimal-projection} provides a
concrete approximation using the Fock-space model.

We first fix some notation.

\begin{notation}\label{not:first-layer-columns}
Recall from Theorem~\ref{thm:length-filtration-main} and its proof that
\(J_0=\K(\F)\). Put \(p_1=P_1+\K(\F)\) and
\(E_1=\overline{(J_1/\K(\F))p_1}\), with inner product determined by
\(p_1x^*yp_1=\langle xp_1,yp_1\rangle p_1\) for
\(x,y\in J_1/\K(\F)\). Its Hilbert-space norm agrees with the quotient
norm, and left multiplication identifies \(J_1/\K(\F)\) with
\(\K(E_1)\). Let \(\mathcal E_{1,\mathrm{alg}}\) be the linear span
of \(\xi\odot\eta\), where \(\xi,\eta\in\F\) are elementary. For
\(\alpha=\sum_{j=1}^m\lambda_j\xi_j\odot\eta_j
\in\mathcal E_{1,\mathrm{alg}}\), where \(\lambda_j\in\bbC\) and
\(\xi_j,\eta_j\in\F\) are elementary, put
\(V_\alpha=\sum_{j=1}^m\lambda_j\ell(\xi_j)r(\eta_j)P_1\). Since
\(V_\alpha=V_\alpha P_1\), we write
\(v_\alpha=V_\alpha+\K(\F)\in E_1\). If
\(K_{\bbR}\subset H_{\bbR}\) is finite-dimensional with complexification
\(K\), we say that \(\alpha\), or \(V_\alpha\), is supported on \(K\) if
\(\alpha\) admits such a representation with
\(\xi_j,\eta_j\in\mathcal F_q(K)\).
\end{notation}

The following is the key approximation technique that will be used to prove the main theorem.

\begin{proposition}
\label{prop:first-layer-real-minimal-projection}
Let \(e\in J_1\) be a projection
such that \(JeJ=e\), and suppose
that \(e+\K(\F)\) is a nonzero minimal projection of
\(J_1/\K(\F)\).

For every \(\varepsilon>0\), there exist a finite-dimensional real
subspace \(K_{\bbR}\subset H_{\bbR}\), an element
\(\alpha=\sum_{j=1}^m\lambda_j\,\xi_j\odot\eta_j\in \mathcal E_{1,\text{alg}}\), where
\(\lambda_j\in\bbC\) and \(\xi_j,\eta_j\in\mathcal F_q(K)\) are
elementary tensors, such that
\[
 JV_\alpha J=V_\alpha,\qquad
 V_\alpha^*V_\alpha-P_1\in\K(\F),\qquad
 \operatorname{dist}(e-V_\alpha V_\alpha^*,\K(\F))<\varepsilon.
\]
Moreover, there exist orthonormal vectors
\(h_1,h_2\in H_{\bbR}\ominus K_{\bbR}\) and orthonormal
vectors \(\psi_1,\psi_2\in e\F\) satisfying \(J\psi_i=\psi_i\)
such that \(\norm{\psi_i-V_\alpha h_i}<\varepsilon\) for
\(i=1,2\).
\end{proposition}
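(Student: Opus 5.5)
Since \(e+\K(\F)\) is a minimal projection in \(J_1/\K(\F)\cong\K(E_1)\), it is a rank-one projection on \(E_1\), so \(e+\K(\F)=v\,v^*\) for a unit vector \(v\in E_1\). Concretely, \(v=xp_1\) with \(x\in J_1/\K(\F)\), and then \(e+\K(\F)=xp_1x^*\) while \(p_1x^*xp_1=p_1\). The plan is to approximate \(v\) by some \(v_\alpha\) with \(\alpha\in\mathcal E_{1,\mathrm{alg}}\) and then correct the result.

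\emph{Step 1 (density of the \(v_\alpha\)).} By part~(\ref{item:length-filtration-dense-subalgebra}) of Lemma~\ref{lem:length-filtration}, \(D_1\) is dense in \(J_1\). Hence \(E_1=\overline{(J_1/\K)p_1}\) is the closure of the images of \(D_1P_1\). For an elementary term \(\ell(\xi)r(\eta)P_k\ell(\xi')^*r(\eta')^*P_1\) there are two cases. If \(k=0\), the term is compact. If \(k=1\), the term is nonzero only when \(|\xi'|+|\eta'|=0\), and then it equals \(\ell(\xi)r(\eta)P_1\). So the vectors \(v_\alpha\) are dense in \(E_1\), and we may choose \(\alpha\), supported on some finite-dimensional \(K\), with \(\|v_\alpha-v\|\) small.

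\emph{Step 2 (normalization).} We have \(P_1V_\alpha^*V_\alpha P_1+\K=\langle v_\alpha,v_\alpha\rangle p_1\), so \(V_\alpha^*V_\alpha-\|v_\alpha\|^2P_1\in\K\). Rescaling \(\alpha\) then gives \(V_\alpha^*V_\alpha-P_1\in\K\). Next, \(V_\alpha V_\alpha^*+\K\) is the rank-one projection onto \(v_\alpha\). It is close in norm to \(vv^*=e+\K\), which gives \(\operatorname{dist}(e-V_\alpha V_\alpha^*,\K)<\varepsilon\).

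\emph{Step 3 (\(J\)-invariance).} Since \(JeJ=e\) and \(JP_1J=P_1\), the map \(\operatorname{Ad}J\) is an antilinear automorphism of \(J_1/\K\) preserving \(p_1\). It therefore induces an antiunitary \(\tilde J\) on \(E_1\), given by \(\tilde J(xp_1)=JxJp_1\). The line \(\bbC v\) is \(\tilde J\)-invariant, so we may rotate \(v\) by a phase to make \(\tilde Jv=v\). Now \(J\ell(\xi)r(\eta)J=r(J\xi)\ell(J\eta)\) and \(JP_1J=P_1\), so \(JV_\alpha J=V_{\alpha'}\), where \(\alpha'=\sum\bar\lambda_jJ\eta_j\odot J\xi_j\). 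This is again supported on \(K\), because \(K\) is the complexification of a real subspace. Replacing \(\alpha\) by \((\alpha+\alpha')/2\) keeps \(v_\alpha\) close to \(v\) and gives \(JV_\alpha J=V_\alpha\) exactly. We renormalize as in Step 2; the normalizing constant is real, so \(J\)-invariance is preserved.

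\emph{Step 4 (the vectors \(\psi_i\)).} This is the main technical point. Write \(e-V_\alpha V_\alpha^*=k+d\) with \(k\in\K\) and \(\|d\|<\varepsilon\). Also \(V_\alpha^*V_\alpha=P_1+k'\) with \(k'\in\K\). Because \(k\) and \(k'\) are compact, we can choose orthonormal \(h_1,h_2\in H_{\bbR}\ominus K_{\bbR}\) with \(\|kV_\alpha h_i\|\) and \(\|k'h_i\|\) small; we can also make \(\langle k'h_1,h_2\rangle\) small. This uses that \(H_{\bbR}\ominus K_{\bbR}\) is infinite-dimensional, so there are unit vectors there converging weakly to \(0\).

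It follows that the vectors \(V_\alpha h_i\) are almost orthonormal. They are also \(J\)-fixed: since \(Jh_i=h_i\), we get \(JV_\alpha h_i=V_\alpha Jh_i=V_\alpha h_i\). Moreover \(\|eV_\alpha h_i-V_\alpha h_i\|\lesssim\varepsilon\), because \(V_\alpha V_\alpha^*V_\alpha h_i\approx V_\alpha h_i\).

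Now set \(\phi_i=eV_\alpha h_i\). These are \(J\)-fixed, since \(e\) commutes with \(J\), and they are almost orthonormal in \(e\F\). Gram–Schmidt with real coefficients preserves both properties and produces orthonormal \(J\)-fixed \(\psi_1,\psi_2\in e\F\) close to \(V_\alpha h_i\). Shrinking \(\varepsilon\) at the start absorbs all the constants.
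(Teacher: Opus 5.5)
Your proposal is correct and follows essentially the same route as the paper. You write $e+\K(\F)$ as the rank-one projection onto a conjugation-fixed unit vector of $E_1$, approximate that vector by a $J$-symmetrized, normalized $v_\alpha$, then choose $h_1,h_2\in H_{\bbR}\ominus K_{\bbR}$ far enough out that the compact errors are negligible, and orthonormalize $eV_\alpha h_i$ with real coefficients. The differences are minor: the paper gets exact orthonormality of $V_\alpha h_1,V_\alpha h_2$ from Lemma~\ref{lem:first-layer-columns}(2) and uses $W(W^*W)^{-1/2}$ instead of Gram--Schmidt, while your approximate orthonormality via compactness of $V_\alpha^*V_\alpha-P_1$ works equally well.
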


It is helpful
to view the above proposition as an approximated lifting problem. Under the identification
\(J_1/\K(\F)\cong\K(E_1)\subset\B(\F)/\K(\F)\), write
\(e+\K(\F)=\theta_{\chi,\chi}\) for a unit vector \(\chi\in E_1\).
The rank-one operator \(\theta_{\chi,p_1}\) has initial projection
\(p_1\) and final projection \(e+\K(\F)\). Thus, modulo compact
operators, a lift of \(\theta_{\chi,p_1}\) may be viewed as an
isometry from \(P_1\F\) to \(e\F\), thereby connecting the abstract
projection \(e\) with the well-understood projection \(P_1\). Also, we note that although the last statement only provides two vectors $h_1,h_2$, the same proof also works if we want $n$ orthonomal vectors $h_1,\cdots,h_n\in H_{\bbR }\ominus K_{\bbR}$ with the same properties for any $n\geq 1$. 

The following lemma identifies a dense algebraic subspace of \(E_1\)
on which this lifting picture is concrete. For \(\varphi=v_\alpha\),
the operators \(V_\alpha\) and \(V_\alpha V_\alpha^*\) lift
\(\theta_{\varphi,p_1}\) and \(\theta_{\varphi,\varphi}\), respectively,
while \(V_\alpha^*V_\alpha\) records the norm of \(\varphi\) modulo
compact operators.

\begin{lemma}\label{lem:first-layer-columns}
The following hold.
\begin{enumerate}
\item\label{item:first-layer-density}
The map
\[
 \mathcal E_{1,\mathrm{alg}}\longrightarrow E_1,
 \qquad \alpha\longmapsto v_\alpha,
\]
has dense range. Moreover, the formula
\[
 [\alpha,\beta]_1p_1
 =V_\alpha^*V_\beta+\K(\F)
 \qquad(\alpha,\beta\in\mathcal E_{1,\mathrm{alg}})
\]
defines a positive semidefinite sesquilinear form on
\(\mathcal E_{1,\mathrm{alg}}\).
The Hilbert space obtained from this form by separation and completion
is isomorphic to \(E_1\) through the map above.

\item\label{item:first-layer-scalar-mod-compacts}
For \(\alpha\in\mathcal E_{1,\mathrm{alg}}\), put
\(c=[\alpha,\alpha]_1=\norm{v_\alpha}_{E_1}^2\). Then
\(V_\alpha^*V_\alpha-cP_1\in\K(\F)\). If \(\alpha\) and \(\beta\)
are supported on \(K\), then
\[
 \langle V_\alpha h,V_\beta g\rangle
 =[\alpha,\beta]_1\langle h,g\rangle
 \qquad(h,g\in K^\perp).
\]
\end{enumerate}
\end{lemma}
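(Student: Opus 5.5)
The plan is to get everything from two facts already established: \(V_\alpha\in J_1\), and the computation \(P_1CP_1\subset\bbC P_1+\K(\F)\) from the proof of Theorem~\ref{thm:length-filtration-main}(2). Since \(P_j\ell(\xi)r(\eta)P_1=\ell(\xi)r(\eta)P_1\) for \(j=|\xi|+|\eta|+1\), each \(\ell(\xi)r(\eta)P_1\) has the form \(P_jW(\xi)W_R(\eta)P_1\), so it lies in \(J_1\). Hence \(V_\alpha=V_\alpha P_1\in J_1\), and \(v_\alpha=V_\alpha+\K(\F)\) indeed lies in \(E_1=\overline{(J_1/\K(\F))p_1}\).

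For density in part~(1), I would use Lemma~\ref{lem:length-filtration}(2): \(D_1\) is dense in \(J_1\), so the elements \(xp_1\) with \(x\in D_1\) are dense in \(E_1\). This uses that the Hilbert norm on \(E_1\) agrees with the quotient norm. Take a spanning element \(x=\ell(\xi)r(\eta)P_k\ell(\xi')^*r(\eta')^*\) with \(k\in\{0,1\}\).
\begin{itemize}
\item If \(k=0\), then \(x\) is compact, since it factors through the rank-one projection \(P_0\).
\item If \(k=1\), then \(P_k\ell(\xi')^*r(\eta')^*P_1\) vanishes by length counting unless \(|\xi'|+|\eta'|=0\). In that case \(xP_1=\ell(\xi)r(\eta)P_1\), up to a scalar.
\end{itemize}
Thus every \(xp_1\) with \(x\in D_1\) equals some \(v_\alpha\), which gives density.

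For the form in part~(1), note \(V_\alpha^*V_\beta=P_1V_\alpha^*V_\beta P_1\in P_1CP_1\subset\bbC P_1+\K(\F)\). Since \(p_1\neq0\), this defines a unique scalar \([\alpha,\beta]_1\). Sesquilinearity is clear. Moreover \([\alpha,\beta]_1p_1=p_1v_\alpha^*v_\beta p_1=\langle v_\alpha,v_\beta\rangle p_1\), so the form is exactly the pullback of the inner product of \(E_1\) along \(\alpha\mapsto v_\alpha\). Positivity follows, and the separated completion maps isometrically onto the closure of the range, which is all of \(E_1\). The first claim of part~(2) is the special case \(\beta=\alpha\).

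The remaining claim, the exact identity on \(K^\perp\), is the main obstacle, because the compact correction must be shown to vanish there exactly, not just approximately. I would argue by symmetry and then by a weak limit.
\begin{itemize}
\item Let \(u\) be an orthogonal transformation of \(H_{\bbR}\) fixing \(K_{\bbR}\) pointwise, and let \(U=\mathcal F_q(u)\) be its second quantization. Then \(U\) commutes with \(P_1\), and \(U\) fixes \(\xi_j,\eta_j\in\mathcal F_q(K)\). Since \(U\ell(\xi)U^*=\ell(u\xi)\), and likewise for \(r\), we get \(UV_\alpha U^*=V_\alpha\) and \(UV_\beta U^*=V_\beta\).
\item Therefore the sesquilinear form \(b(h,g)=\langle V_\alpha h,V_\beta g\rangle\) on \(K^\perp\) satisfies \(b(uh,ug)=b(h,g)\) for every such \(u\).
\item Alternatively, expanding the \(q\)-inner product shows \(b\) depends only on \(\langle h,g\rangle\) and \(\langle h,Jg\rangle\): in each permutation term the single non-\(K\) letter \(h\) must be paired with \(g\), which forces linearity in \(\langle h,g\rangle\). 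Either way, invariance of a bounded sesquilinear form on the complexification of the infinite-dimensional real space \(K_{\bbR}^\perp\) under its full orthogonal group forces \(b(h,g)=\lambda\langle h,g\rangle\) for some constant \(\lambda\).
\item To identify \(\lambda\), take an orthonormal sequence \(h_i\in K_{\bbR}^\perp\). Writing \(V_\beta^*V_\alpha=[\beta,\alpha]_1P_1+T\) with \(T\) compact, we have \(\langle Th_i,h_i\rangle\to0\). Hence \(\lambda=\lim_i\langle V_\alpha h_i,V_\beta h_i\rangle=\overline{[\beta,\alpha]_1}=[\alpha,\beta]_1\), using hermitian symmetry of the form. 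This gives the stated identity.
\end{itemize}
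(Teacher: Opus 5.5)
Your proposal is correct. Part~(1) and the first claim of part~(2) follow the paper's proof essentially verbatim: density via $D_1$ with the case split $k\in\{0,1\}$, and the identification $[\alpha,\beta]_1=\langle v_\alpha,v_\beta\rangle_{E_1}$ from $v_\alpha=v_\alpha p_1$. Your preliminary step producing the scalar through $P_1CP_1\subset\bbC P_1+\K(\F)$ is redundant, since the identity $p_1v_\alpha^*v_\beta p_1=\langle v_\alpha,v_\beta\rangle p_1$ you write next already gives it.

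The one genuine difference is how you show that $b(h,g)=\langle V_\alpha h,V_\beta g\rangle$ is a scalar multiple of $\langle h,g\rangle$ on $K^\perp$.
\begin{itemize}
\item The paper inspects the $q$-inner product directly. The only letters outside $K$ are $h$ and $g$, so every nonzero permutation term pairs them, and the remaining factor does not depend on $h,g$.
\item Your main route is a symmetry argument. Second quantizations $\mathcal F_q(u)$ of orthogonal maps fixing $K_{\bbR}$ commute with $V_\alpha$ and $V_\beta$, and act as $u$ on $P_1\F=H$. Hence $b$ is $\mathcal O(K_{\bbR}^\perp)$-invariant, and therefore scalar.
\end{itemize}
This is valid, and it explains conceptually why the constant depends only on $\alpha,\beta$. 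However, it rests on the fact that an operator on $K^\perp$ commuting with all complexified real orthogonal maps is a scalar, which you assert without proof. One line suffices: the reflection in a real unit vector $e$ forces $Be\in\bbC e$, and transitivity on the real unit sphere makes the eigenvalue constant. The paper's route is more elementary and needs no such input.

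In your ``alternative'' bullet, the mention of $\langle h,Jg\rangle$ is spurious. No $J$ appears in the $q$-inner product formula, and a term in $\langle h,Jg\rangle$ would not even be sesquilinear. The expansion therefore gives dependence on $\langle h,g\rangle$ alone, exactly as in the paper.

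Your final identification of the constant, via an orthonormal sequence in $K_{\bbR}^\perp$ and compactness of $V_\beta^*V_\alpha-[\beta,\alpha]_1P_1$, is the same as the paper's.
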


\begin{proof}
Since \(v_\alpha=v_\alpha p_1\), the definition of the inner product
on \(E_1\) gives
\[
 V_\alpha^*V_\beta+\K(\F)
 =\langle v_\alpha,v_\beta\rangle_{E_1}p_1.
\]
Thus \([\alpha,\beta]_1=\langle v_\alpha,v_\beta\rangle_{E_1}\),
which proves the assertion about the sesquilinear form.

By the density of \(D_1\) in \(J_1\) and
the norm identity in Notation~\ref{not:first-layer-columns}, it
suffices to show that for every \(d\in D_1\), the vector
\((d+\K(\F))p_1\) has the form \(v_\alpha\).  By linearity,
consider a standard term
\(d=\ell(\xi)r(\eta)P_k\ell(\xi')^*r(\eta')^*\), where \(k=0\) or
\(1\).  Note that the only nonzero case with \(k=1\) gives
\(dP_1=V_{\xi\odot\eta}\), while \(dP_1\in\K(\F)\) when \(k=0\).
This proves density, and the completion statement follows.

Taking \(\beta=\alpha\) in the first displayed identity, with
\(c=[\alpha,\alpha]_1\), gives
\(V_\alpha^*V_\alpha-cP_1\in\K(\F)\). Now suppose that \(\alpha\)
and \(\beta\) are supported on \(K\). For \(h,g\in K^\perp\), the
\(q\)-Fock inner-product formula shows that every nonzero term in
\(\langle V_\alpha h,V_\beta g\rangle\) pairs \(h\) with \(g\).
Hence there is a scalar \(d\), depending only on \(\alpha\) and
\(\beta\), such that
\[
 \langle V_\alpha h,V_\beta g\rangle
 =d\langle h,g\rangle
 \qquad(h,g\in K^\perp).
\]
Let \((h_n)\) be an orthonormal sequence in \(K^\perp\). Since
\(V_\alpha^*V_\beta-[\alpha,\beta]_1P_1\) is compact,
\[
 d=\langle V_\alpha h_n,V_\beta h_n\rangle
  =[\alpha,\beta]_1
   +\langle h_n,
      (V_\alpha^*V_\beta-[\alpha,\beta]_1P_1)h_n\rangle
  \longrightarrow[\alpha,\beta]_1.
\]
Thus \(d=[\alpha,\beta]_1\), as required.
\end{proof}

For a general minimal projection
\(e+\K(\F)=\theta_{\chi,\chi}\), there is no comparable concrete
Fock-space formula for a lift of \(\theta_{\chi,p_1}\). We therefore
approximate \(\chi\) by algebraic vectors \(v_\alpha\). The operator $V_\alpha$ can then be considered as an approximate essential isometry from $P_1$ to $e$. This leads to the proof of Proposition~\ref{prop:first-layer-real-minimal-projection}. 

\begin{proof}[Proof of Proposition~\ref{prop:first-layer-real-minimal-projection}]
Conjugation by \(J\) preserves \(J_1\), \(\K(\F)\), and \(p_1\), and
hence induces a conjugation on \(E_1\). For elementary tensors
\(\xi,\eta\), one has
\(JV_{\xi\odot\eta}J=V_{J\eta\odot J\xi}\), so the image of
\(\mathcal E_{1,\mathrm{alg}}\) in \(E_1\) is invariant under this
conjugation.

Under \(J_1/\K(\F)\cong\K(E_1)\), write
\(e+\K(\F)=\theta_{\chi,\chi}\) for a unit vector \(\chi\in E_1\),
where \(\theta_{\xi,\eta}(\zeta)=\xi\langle\eta,\zeta\rangle\).
Since \(JeJ=e\), we may assume that \(\chi\) is invariant under the
conjugation.

Fix \(\delta>0\), to be chosen sufficiently small later. By
part~(\ref{item:first-layer-density}) of
Lemma~\ref{lem:first-layer-columns}, after averaging an algebraic
approximation to \(\chi\) with its conjugate and normalizing, we may
choose \(\alpha\in\mathcal E_{1,\mathrm{alg}}\) such that, with
\(\varphi=v_\alpha\),
\(
 JV_\alpha J=V_\alpha,\;
 \norm{\varphi}_{E_1}=1,\; \) and \(
 \norm{\varphi-\chi}_{E_1}<\delta.
\)
Then
part~(\ref{item:first-layer-scalar-mod-compacts}) of
Lemma~\ref{lem:first-layer-columns} gives
\(V_\alpha^*V_\alpha-P_1\in\K(\F)\).

Under the identification \(J_1/\K(\F)\cong\K(E_1)\), the class
\(V_\alpha+\K(\F)\) is \(\theta_{\varphi,p_1}\). Therefore
\(V_\alpha V_\alpha^*+\K(\F)=\theta_{\varphi,\varphi}\), and hence
\[
 \operatorname{dist}(e-V_\alpha V_\alpha^*,\K(\F))
 =\norm{\theta_{\chi,\chi}-\theta_{\varphi,\varphi}}
 \leq2\norm{\chi-\varphi}_{E_1}<2\delta.
\]
For the moreover part, note that
\[
 \norm{(1-e)V_\alpha+\K(\F)}
 =\norm{(1-\theta_{\chi,\chi})\theta_{\varphi,p_1}}
 =\norm{(1-\theta_{\chi,\chi})\varphi}_{E_1}
 \leq\norm{\varphi-\chi}_{E_1}<\delta.
\]
We may then choose \(T\in\K(\F)\) such that
\(\norm{(1-e)V_\alpha+T}<2\delta\).
Choose a finite-dimensional real subspace
\(K_{\bbR}\subset H_{\bbR}\) whose complexification \(K\) supports
\(\alpha\).  Since \(T\) is compact, we may find orthonormal vectors
\(h_1,h_2\in H_{\bbR}\ominus K_{\bbR}\) such that
\(\norm{(1-e)V_\alpha h_i}<3\delta\) for \(i=1,2\).
Put \(\psi_i'=V_\alpha h_i\). These vectors are \(J\)-real, and by
part~(\ref{item:first-layer-scalar-mod-compacts}) of
Lemma~\ref{lem:first-layer-columns}, the vectors
\(\psi_1',\psi_2'\) are orthonormal.

We can now correct $ e\psi_1',e\psi_2'$ by an invertible Gram matrix to get the desired $\psi_1,\psi_2$. Let \(U_0:\bbC^2\to\F\) be the isometry with columns
\(\psi_1',\psi_2'\), and put \(W=eU_0\). Then
\(\norm{W-U_0}\leq3\sqrt2\,\delta\), so \(W^*W\to I_2\) as
\(\delta\to0\). For all sufficiently small \(\delta\), define
\(
 \Psi=W(W^*W)^{-1/2}:\bbC^2\longrightarrow e\F.
\)
Then \(\Psi\) is an isometry and \(\Psi\to U_0\) as \(\delta\to0\).
Let \(\psi_1,\psi_2\) be its columns. Since the columns of \(W\) are
\(J\)-real, \(W^*W\) is real symmetric, and hence
\(J\psi_i=\psi_i\).
Choosing \(\delta\) sufficiently small gives simultaneously
\(\operatorname{dist}(e-V_\alpha V_\alpha^*,\K(\F))
<\varepsilon\) and
\(\norm{\psi_i-V_\alpha h_i}<\varepsilon\) for \(i=1,2\).
\end{proof}

\subsection{First Chaos Projection}
\label{sec:first-chaos-projections}

We now summarize some abstract properties of the projection $P_1$ which will later be enough to recover the parameter $q$. 

\begin{definition}
\label{def:first-chaos-projection}
A projection
\(e\in J_1\) is called a \emph{first chaos projection} if
\begin{enumerate}
\item \(JeJ=e\), \(e\Omega=0\), and \(e\) is \(M\)-regular;
\item \(e+\K(\F)\) is a nonzero minimal projection of
      \(J_1/\K(\F)\);
\item The map
\[
 e\F\odot e\F\ni x\Omega\otimes y\Omega
 \longmapsto
 \bigl(xy-\tau(xy)1\bigr)\Omega\in\F\ominus\bbC\Omega
\]
extends to a bounded operator \(Q_e\) on \(e\F\otimes e\F\), and there exists
\(r_e\in(-1,1)\) such that \(Q_e^*Q_e=I+r_eF_e\), where
\(F_e(\xi\otimes\eta)=\eta\otimes\xi\) is the tensor flip on
\(e\F\otimes e\F\).
\end{enumerate}
\end{definition}

The terminology is motivated by the canonical projection \(P_1\).
For \(h,g\in H\), one has
\[
 s(h)s(g)\Omega
 =h\otimes g+\langle Jh,g\rangle\Omega.
\]
Consequently, \(Q_{P_1}(h\otimes g)=h\otimes g\), where the domain has
the usual tensor-product inner product and the range has the
two-particle \(q\)-Fock inner product.  It follows that
\(
 Q_{P_1}^*Q_{P_1}=I+qF_{P_1},
\) hence $P_1$ is a first chaos projection.

\begin{proposition}\label{prop:first-chaos-parameter}
Let \(e\in J_1\) be a
first chaos projection and \(r_e\in(-1,1)\) satisfy
\(Q_e^*Q_e=I+r_eF_e\). Then \(r_e\in q[-q^2,1]\).
\end{proposition}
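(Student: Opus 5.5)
The plan is to compute \(r_e\) from a single matrix coefficient of \(Q_e^*Q_e\), and then evaluate that coefficient on the concrete approximants supplied by Proposition~\ref{prop:first-layer-real-minimal-projection}. If \(\psi_1,\psi_2\in e\F\) are orthonormal, then
\[
 \bigl\langle Q_e(\psi_1\otimes\psi_2),Q_e(\psi_2\otimes\psi_1)\bigr\rangle
 =\bigl\langle(I+r_eF_e)(\psi_1\otimes\psi_2),\psi_2\otimes\psi_1\bigr\rangle
 =r_e .
\]
Since \(e\) is \(M\)-regular, we can write \(\psi_i=\widehat{x_i}\) with \(x_i\in M\) and \(\norm{x_i}_\infty\leq c\norm{\psi_i}_2\) by Lemma~\ref{lem:regular-corners}. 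Because \(\psi_i\perp\Omega\), this gives
\[
 r_e=\bigl\langle (x_1x_2-\tau(x_1x_2))\Omega,(x_2x_1-\tau(x_2x_1))\Omega\bigr\rangle .
\]
We fix \(\varepsilon>0\) and take \(K_{\bbR}\), \(\alpha=\sum_j\lambda_j\xi_j\odot\eta_j\), \(h_1,h_2\in H_{\bbR}\ominus K_{\bbR}\) and \(J\)-real orthonormal \(\psi_1,\psi_2\in e\F\) with \(\norm{\psi_i-V_\alpha h_i}<\varepsilon\), as in Proposition~\ref{prop:first-layer-real-minimal-projection}. By Lemma~\ref{lem:first-layer-columns}(2) we may also assume \([\alpha,\alpha]_1=1\).

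The first step is to replace \(x_i\) by the Wick operator \(y_i=W(V_\alpha h_i)=\sum_j\lambda_jW(\xi_j\otimes h_i\otimes\eta_j)\). We have \(\norm{(x_i-y_i)\Omega}<\varepsilon\). The norms \(\norm{x_i}\) are bounded by \(c\). The norms \(\norm{y_i}\) are bounded independently of the choice of fresh direction \(h_i\): the Wick formula expresses \(y_i\) as a finite sum of products of creation and annihilation operators with at most one fresh letter, and each has norm \(\leq(1-|q|)^{-1/2}\). Writing \(x_1x_2\Omega-y_1y_2\Omega=x_1(x_2-y_2)\Omega+(x_1-y_1)y_2\Omega\), the first term is \(O(\varepsilon)\) directly. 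For the second term we use \((x_1-y_1)y_2\Omega=J y_2^*J(x_1-y_1)\Omega\), which is \(O(\varepsilon)\) since \(y_2\Omega\) lies in the domain where right multiplication is by \(Jy_2^*J\). The same holds for the reversed products and the traces. Hence \(r_e\) equals, up to \(O(\varepsilon)\), the explicit quantity
\[
 \rho(\alpha)=\bigl\langle (y_1y_2-\tau(y_1y_2))\Omega,(y_2y_1-\tau(y_2y_1))\Omega\bigr\rangle ,
\]
and it remains to show \(\rho(\alpha)\in q[-q^2,1]\) for every normalized \(J\)-real \(\alpha\) supported on \(K\) with fresh orthonormal \(h_1,h_2\).

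The second step is the combinatorial evaluation of \(\rho(\alpha)\). We expand \(y_1y_2\Omega\) via the product formula for Wick words, summing over contractions between the right part of the first word and the left part of the second. The letters \(h_1,h_2\) are orthogonal to \(K\) and to each other. Hence in the \(q\)-inner product against \(y_2y_1\Omega\) the only surviving pairings match \(h_1\) with \(h_1\) and \(h_2\) with \(h_2\), and the two fresh strands are forced to cross exactly once. Each surviving term therefore carries a factor \(q\), times \(q^{\#}\) where \(\#\) counts crossings of the fresh strands with the \(K\)-letters. Collecting these terms, I expect \(\rho(\alpha)=q\langle T\alpha,\alpha\rangle\) for an explicit self-adjoint operator \(T\) on the completion \(E_1\) (a twisted \(q\)-swap of the \(\xi\) and \(\eta\) blocks). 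The normalization \([\alpha,\alpha]_1=1\), via Lemma~\ref{lem:first-layer-columns}(2), identifies the relevant Gram form, so the claim reduces to showing that the numerical range of \(T\) lies in \([-q^2,1]\).

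The main obstacle is this last operator estimate, the ordered swap estimate. The upper bound \(\langle T\alpha,\alpha\rangle\le1\) should follow from a Cauchy--Schwarz argument, since \(T\) is a contraction for the \([\cdot,\cdot]_1\) form. For the lower bound \(-q^2\), I would first try to write \(T=q^2S+(1-q^2)D\) with \(D\ge0\) and \(S\) a contraction, using the Bo\.zejko--Speicher factorization \(P^{(n)}=(1\otimes P^{(n-1)})R_n\) of the \(q\)-Gram operator to isolate the term in which a fresh strand passes an entire \(K\)-block. If this decomposition fails, the fallback is a Yang--Baxter-type positivity bound for the braid operator (the estimate the introduction attributes to \cite{MS03}), applied to the relevant two-block swap. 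Letting \(\varepsilon\to0\) then gives \(r_e\in q[-q^2,1]\).
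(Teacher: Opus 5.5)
There is a genuine gap in your first step. The error term \((x_1-y_1)y_2\Omega=Jy_2^*J(x_1-y_1)\Omega\) is bounded only by \(\norm{y_2}\,\varepsilon\), and \(\norm{y_2}=\norm{W(V_\alpha h_2)}\) is controlled by \(\alpha\), not uniformly in \(\varepsilon\). Proposition~\ref{prop:first-layer-real-minimal-projection} supplies \(\alpha\) only after \(\varepsilon\) is fixed, and in general \(\alpha\) must change as \(\varepsilon\to0\), because \(v_\alpha\) has to approach the vector \(\chi\in E_1\) determined by \(e\). The normalization \([\alpha,\alpha]_1=1\) is an \(L^2\)-type condition and gives no operator-norm bound. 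Already for \(q=0\), \(\alpha=k^{\otimes n}\odot\Omega\) with \(k\in K_{\bbR}\) a unit vector has \([\alpha,\alpha]_1=1\) but \(\norm{W(V_\alpha h)}=\norm{W(k^{\otimes n})s(h)}\geq n+1\). Independence of the fresh direction \(h_i\) is therefore not what is needed.

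The paper sidesteps this by taking \(y_i=E_i(x_i)\), the conditional expectation onto \(M_q(K_{\bbR}\oplus\bbR h_i)\). Then \(\norm{y_i}\leq c\) uniformly, and \(\norm{x_i-y_i}_2<\varepsilon\) because \(V_\alpha h_i\) lies in the range of \(E_i\). The product \(y_1y_2\Omega\) is never computed explicitly. Instead, the projections \(R_1,R_2\) counting \(h_1\)- and \(h_2\)-letters commute with the relevant right and left actions. Hence \(\xi_\varepsilon=R_1R_2y_1y_2\Omega\) lies in \(K_+\) and is close to \(x_1x_2\Omega\), while \(S\xi_\varepsilon\) is close to \(x_2x_1\Omega\), where \(S\) is the second quantization of the swap \(h_1\leftrightarrow h_2\).

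Even granting the approximation, the core inequality is left open: you name the ordered swap estimate as the main obstacle and offer only tentative routes. The reduction is also not set up correctly, for three reasons.
\begin{enumerate}
\item Since \(y_1\) and \(y_2\) are each linear in \(\alpha\), \(\rho(\alpha)\) is quartic in \(\alpha\), so it cannot be of the form \(q\langle T\alpha,\alpha\rangle\) for an operator \(T\) on \(E_1\).
\item The relevant normalization is \(\norm{y_1y_2\Omega}^2\), not \([\alpha,\alpha]_1\).
\item The claimed contractivity of \(T\) amounts to \(|\langle S\xi,\xi\rangle|\leq|q|\norm{\xi}^2\) on the relevant subspace, and this is not a Cauchy--Schwarz consequence. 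Unitarity of the swap only gives \(\norm{\xi}^2\); the extra factor \(|q|\) is part of what must be proved.
\end{enumerate}

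The missing idea is to prove the estimate for \emph{every} \(\xi\in K_+\), with no reference to \(\alpha\). For distinct letters, the paper embeds the span isometrically into \(\bigotimes_pD_p\), where \(D=\bbC^2\) with \(\langle d_0,d_1\rangle=q\) and there is one factor per pair of letters recording their relative order. In this model, \(\langle S\xi,\xi\rangle_q\) becomes \(q\) times the quadratic form of \(\mathrm{id}\otimes\bigotimes_fS_f\otimes\mathrm{id}\). Here the factor \(q\) comes from the \((h_1,h_2)\) coordinate, and each local \(3\times3\) block \(S_f=A^{-1}C\) has eigenvalues \(1\) and \(-q^2\). Repeated letters are then handled by an isometric lift to labelled copies, together with an orthogonal decomposition by letter multisets.
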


The final step of the following proof uses the estimate established in
Proposition~\ref{prop:ordered-swap-estimate}.

\begin{proof}
Fix \(\varepsilon>0\).  Since \(e\in J_1\), \(JeJ=e\), and
\(e+\K(\F)\) is a nonzero minimal projection in
\(J_1/\K(\F)\), Proposition~\ref{prop:first-layer-real-minimal-projection}
gives a finite-dimensional real
subspace \(K_{\bbR}\subset H_{\bbR}\), an \(\alpha\in \mathcal E_{1,\text{alg}}\)
supported on the complexification \(K\), orthonormal vectors
\(h_1,h_2\in H_{\bbR}\ominus K_{\bbR}\), and orthonormal
\(J\)-real vectors \(\psi_1,\psi_2\in e\F\) such that
\(\norm{\psi_i-V_\alpha h_i}<\varepsilon\) for \(i=1,2\).

Let \(U\in\mathcal O(H_{\bbR})\) exchange \(h_1\) and \(h_2\)
and fix \(\{h_1,h_2\}^{\perp}\) pointwise, and let \(S=\mathcal F_q(U)\) be the second quantization operator
\[ S=\mathcal F_q(U): \F \to \F,\quad \xi_1\otimes \cdots \otimes \xi_n\mapsto (U\xi_1)\otimes \cdots \otimes (U\xi_n). \]  Since \(\alpha\) is supported on
\(K\subset\{h_1,h_2\}^{\perp}\), we have
\(S V_\alpha h_1=V_\alpha h_2\) and
\(S V_\alpha h_2=V_\alpha h_1\).

Since \(e\) is \(M\)-regular, Lemma~\ref{lem:regular-corners} gives
a constant \(c<\infty\) and elements \(x_i\in M\) such that
\(\psi_i=x_i\Omega\) and \(\norm{x_i}_\infty\leq c\). Moreover,
\(x_i\) is self-adjoint, as
\(J\psi_i=\psi_i\).  It follows that
\(\tau(x_1x_2)=\tau(x_2x_1)=0\), and hence
\(Q_e(\psi_1\otimes\psi_2)=x_1x_2\Omega\) and
\(Q_e(\psi_2\otimes\psi_1)=x_2x_1\Omega\).

We claim that
\begin{equation}
 \norm{x_1x_2}_2=\norm{x_2x_1}_2=1,
 \qquad
 \langle x_1x_2\Omega,x_2x_1\Omega\rangle=r_e.
 \label{eq:first-chaos-product-correlation}
\end{equation}
Indeed,
\[
 \norm{x_1x_2}_2^2
 =\big\langle Q_e^*Q_e(\psi_1\otimes\psi_2),
                  \psi_1\otimes\psi_2\big\rangle
 =\big\langle (I+r_eF_e)(\psi_1\otimes\psi_2),
                  \psi_1\otimes\psi_2\big\rangle=1,
\]
and similarly \(\norm{x_2x_1}_2=1\).  We also compute
\[
 \langle x_1x_2\Omega,x_2x_1\Omega\rangle
 =\big\langle Q_e^*Q_e(\psi_1\otimes\psi_2),
                  \psi_2\otimes\psi_1\big\rangle
 =\big\langle (I+r_eF_e)(\psi_1\otimes\psi_2),
                  \psi_2\otimes\psi_1\big\rangle=r_e.
\]
Here the inner products are taken in $ \F \otimes \F $.

Note that
\(\norm{Sx_1\Omega-x_2\Omega},\allowbreak\
\norm{Sx_2\Omega-x_1\Omega}<2\varepsilon\), as
\(\norm{\psi_i-V_\alpha h_i}<\varepsilon\).  Since \(S\)
implements an automorphism of \(M\), it follows that
\begin{equation}
 \norm{Sx_1x_2\Omega-x_2x_1\Omega}
 \leq c\norm{Sx_1\Omega-x_2\Omega}
      +c\norm{Sx_2\Omega-x_1\Omega}<4c\varepsilon.
 \label{eq:first-chaos-swapped-products}
\end{equation}

Put \(L_i=K_{\bbR}\oplus\bbR h_i\), let
\(E_i:M\to M_q(L_i)\) be the conditional expectation, and set
\(y_i=E_i(x_i)\).  We note that
\begin{equation*}
 \norm{y_i\Omega-\psi_i},\
 \norm{y_i\Omega-V_\alpha h_i}<\varepsilon,
\end{equation*}
as \(V_\alpha h_i\in\mathcal F_q(L_i)\), while
\(\norm{V_\alpha h_i-\psi_i}<\varepsilon\), and we have
\(\norm{y_i}_\infty\leq\norm{x_i}_\infty\leq c\).  It follows that
\(\norm{x_i-y_i}_2<\varepsilon\), and hence
\begin{equation}
 \norm{x_1x_2-y_1y_2}_2,\
 \norm{x_2x_1-y_2y_1}_2<2c\varepsilon.
 \label{eq:first-chaos-product-approximations}
\end{equation}

For \(i=1,2\), let \(R_i\) be the orthogonal projection onto the
closed span of the elementary tensors in $ \{h_1,h_2\}\cup\{h_1,h_2\}^{\perp} $ containing exactly one
\(h_i\)-factor.  Since \(V_\alpha h_i\) is in the range of
\(R_i\), \(\norm{y_i\Omega-V_\alpha h_i}<\varepsilon\)
gives \(\norm{(1-R_i)y_i\Omega}<\varepsilon\).  Observe that \(R_1\)
commutes with the right action of \(y_2\), while \(R_2\) commutes with
the left action of \(y_1\), and hence
\(\norm{(1-R_1)y_1y_2\Omega},\
\norm{(1-R_2)y_1y_2\Omega}\leq c\varepsilon\).

Set \(\xi_\varepsilon=R_1R_2y_1y_2\Omega\).  The preceding estimates
give
\begin{equation}
 \norm{\xi_\varepsilon-y_1y_2\Omega}
 \leq\norm{(1-R_1)y_1y_2\Omega}
      +\norm{(1-R_2)y_1y_2\Omega}
 \leq2c\varepsilon.
 \label{eq:first-chaos-counting-approximation}
\end{equation}

Let \(K_+\subset\mathcal F_q(H)\) be the closed span of the
elementary tensors containing exactly one \(h_1\)-factor and one
\(h_2\)-factor, with all remaining factors in
\(\{h_1,h_2\}^{\perp}\), and with the \(h_1\)-factor
occurring first.  We have \(\xi_\varepsilon\in\mathcal K_+\).  Indeed,
\(R_1R_2\) selects the terms containing exactly one
\(h_1\)-factor and one \(h_2\)-factor.  Since these factors
come from \(y_1\) and \(y_2\), respectively, their order in the
product \(y_1y_2\) places \(h_1\) first.  The assertion follows
first for Wick polynomials and then by \(L^2\)-approximation.

Combining equations \eqref{eq:first-chaos-product-approximations} and
\eqref{eq:first-chaos-counting-approximation}, and then using
equation \eqref{eq:first-chaos-swapped-products} and the unitarity of
\(S\), we obtain
\(\norm{\xi_\varepsilon-x_1x_2\Omega}\leq4c\varepsilon\) and
\(\norm{S\xi_\varepsilon-x_2x_1\Omega}\leq8c\varepsilon\).
Together with equation \eqref{eq:first-chaos-product-correlation}, these
estimates imply \(\norm{\xi_\varepsilon}\to1\) and
\(\langle \xi_\varepsilon,S\xi_\varepsilon\rangle\to r_e\) as
\(\varepsilon\to0\).

In particular, \(\xi_\varepsilon\neq0\) for sufficiently small
\(\varepsilon\). Since, $ \xi_{\varepsilon}\in K_+ $, Proposition~\ref{prop:ordered-swap-estimate} gives
\(\langle \xi_\varepsilon,S\xi_\varepsilon\rangle/
\norm{\xi_\varepsilon}^2\allowbreak\in q[-q^2,1]\), and it follows that
\(r_e\in q[-q^2,1]\) by taking \(\varepsilon\to0\).
\end{proof}

\section{Proof of the main theorem}

An isomorphism between two \(q\)-Gaussian von Neumann algebras
identifies their corresponding Akemann--Ostrand kernels, as computed in
Section~\ref{sec:akemann-ostrand-kernel}. The structural analysis of
Section~\ref{sec:ideal-filtration-first-layer}, however, takes place in
\(X/\K(\F)\). We therefore begin by relating these two settings in
Proposition~\ref{prop:ao-first-word-length-layer} using $M$-regular projections (Lemma~\ref{lem:regular-corners}).

\begin{lemma}
\label{lem:word-length-bridge}
We have the following:
\begin{enumerate}
\item\label{item:word-length-regularity}
For every \(n\geq0\), the projection \(P_n\) is \(M\)-regular.
\item\label{item:word-length-multipliers}
One has \(M,JMJ\subset\mathcal M(\bbX)\).
\item\label{item:word-length-ideals}
For every \(n\geq0\), one has
\(\overline{\C P_n\C}^{\,\norm{\cdot}}
=\overline{CP_nC}^{\,\norm{\cdot}}=J_n\).
\item\label{item:word-length-corners}
For every \(n\geq0\), one has
\(P_n\C P_n=P_nCP_n\).
\end{enumerate}
\end{lemma}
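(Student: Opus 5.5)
We treat the four items in order; (1) and (2) feed into (3) and (4).

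\emph{(1)} Use criterion (1) of Lemma~\ref{lem:regular-corners}. For \(\xi\in H_q^{\otimes n}\), the Wick operator \(W(\xi)\) lies in \(A_q\subset M\) and satisfies \(W(\xi)\Omega=\xi\). Hence \(P_n\F=H_q^{\otimes n}\subset\widehat M\), and \(P_n\) is \(M\)-regular. If an explicit constant is wanted, the Haagerup-type inequality \(\norm{W(\xi)}\leq C_{q,n}\norm{\xi}\) (Bo\.zejko) gives it directly. This is not needed, however, since criterion (1) already yields (2) by the closed graph theorem.

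\emph{(2)} Since \(\bbX\) is the hereditary \(C^*\)-algebra generated by the \(P_n\), it suffices to show \(xP_n\in\bbX\) and \(P_nx\in\bbX\) for \(x\in M\), and likewise for \(x\in JMJ\). By Kaplansky density, choose \(x_k\in A_q\) with \(\norm{x_k}\leq\norm{x}\) and \(x_k\to x\) strongly. Since \(\tau\) is a trace, \(\norm{x_k-x}_2\to0\). The first continuity statement of Lemma~\ref{lem:regular-corners}, applied to the regular projection \(P_n\), gives \(x_kP_n\to xP_n\) in norm. We already know \(x_kP_n\in\bbX\) because \(A_q\subset\mathcal M(\bbX)\), and \(\bbX\) is norm closed, so \(xP_n\in\bbX\). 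Taking adjoints gives \(P_nx\in\bbX\). Products \(xP_kTP_\ell\) then lie in \(\bbX\), since \(P_k\) belongs to the hereditary algebra generated by the \(P_n\). For \(JMJ\), conjugate by \(J\), using \(JP_nJ=P_n\) and \(J\bbX J=\bbX\).

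\emph{(4)} The inclusion \(\supseteq\) is clear. The key point for \(\subseteq\) is the second continuity statement of Lemma~\ref{lem:regular-corners}: \(T\mapsto P_nTP_n\) is continuous from \(\norm{\cdot}_{\infty,1}\) to the operator norm. Take a finite sum \(T=\sum_i a_iJb_iJ\) with \(a_i,b_i\in M\); these are dense in \(\C\), so it is enough to treat them. Approximate each \(a_i,b_i\) by Kaplansky sequences in \(A_q\), bounded in norm and converging in \(\norm{\cdot}_2\). The estimate \(|\langle (aJbJ-a'Jb'J)\hat x,\hat y\rangle|\) for \(x,y\) in the unit ball is bounded by a constant times \(\norm{a-a'}_2+\norm{b-b'}_2\), because \(JbJ\hat x=\widehat{xb^*}\) and \(\norm{xb^*}_2\leq\norm{b}_2\). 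Hence \(T\) is an \(\norm{\cdot}_{\infty,1}\)-limit of elements \(T_k\in C\) with \(\norm{T_k}\) bounded. Then \(P_nT_kP_n\to P_nTP_n\) in norm, so \(P_nTP_n\in\overline{P_nCP_n}=P_nCP_n\). The last equality holds because \(P_nCP_n\) is the corner of the \(C^*\)-algebra \(C\) by a projection \(P_n\in C\), hence closed. For a general element of \(\C\), use norm density of the finite sums together with norm continuity of compression.

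\emph{(3)} Since \(C\subset\C\), we have \(J_n\subset\overline{\C P_n\C}\). For the converse it suffices to show \(xP_n\in J_n\) for \(x\in\C\); by the same density argument it is enough to take \(x=\sum a_iJb_iJ\). I expect this to be the main obstacle. Compression now happens only on one side, so instead of the corner lemma we need a one-sided norm-continuity statement. Write \(aJbJP_n=aP_{\cdot}\ldots\) and use (2): \(JbJP_n\in\bbX\) is approximated in norm by \(Jb_kJP_n\), using regularity of \(P_n\) with \(J\) (or \(P_nJ\)). Then \(aJb_kJP_n\) is approximated in norm by \(a_kJb_kJP_n\), but this last step needs regularity of the range. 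My first attempt is to handle it by factoring \(JbJP_n=P_{\leq m}JbJP_n+(1-P_{\leq m})JbJP_n\), which reduces the question to \(aP_{\leq m}\in\overline{CP_{\leq m}}\), a case already covered by the one-sided continuity. If the tail term cannot be controlled this way, I would instead use \(J_n=\overline{C\,P_n C P_n\,C}\) together with (4), writing \(xP_n=\lim xP_{\leq m}P_n\) and noting that \(P_{\leq m}\) is regular.
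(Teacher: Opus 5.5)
Your items (1), (2) and (4) are correct and are the paper's argument written out in more detail. In (1) you use Wick operators; note that $W(\xi)\in A_q$ for every $\xi\in H_q^{\otimes n}$, not only for algebraic tensors, is itself the Haagerup inequality the paper cites, so you use it through the preliminaries rather than avoid it. In (2) you use Kaplansky density plus the first continuity assertion of Lemma~\ref{lem:regular-corners}, and in (4) the second continuity assertion.

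The gap is in (3): you never actually prove $\C P_n\subset\overline{CP_n}$, which is the whole content of the inclusion $\overline{\C P_n\C}\subset J_n$. You say that passing from $aJb_kJP_n$ to $a_kJb_kJP_n$ ``needs regularity of the range''. You then split along $P_{\leq m}$ without showing the tail term is small. Your fallback is not an argument as written: $xP_{\leq m}P_n=xP_n$ for $m\geq n$, so the limit carries no information.

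The obstacle you identify is not real, because $M$ and $JMJ$ commute. This gives $(a-a_k)Jb_kJP_n=Jb_kJ\,(a-a_k)P_n$, so only one-sided regularity of $P_n$ is needed. Take $a,b\in M$, take $a_k,b_k\in A_q$ from Kaplansky density, and let $c$ be the regularity constant of $P_n$. Using $\norm{J(b-b_k)JP_n}=\norm{(b-b_k)P_n}$ (since $JP_nJ=P_n$) and the first continuity assertion of Lemma~\ref{lem:regular-corners}, one gets
\[
 \norm{aJbJP_n-a_kJb_kJP_n}
 \leq\norm{a}\,\norm{(b-b_k)P_n}+\norm{b}\,\norm{(a-a_k)P_n}
 \leq c\bigl(\norm{a}\,\norm{b-b_k}_2+\norm{b}\,\norm{a-a_k}_2\bigr).
\]
Hence $\C P_n\subset\overline{CP_n}$. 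Taking adjoints gives $P_n\C\subset\overline{P_nC}$, so $\C P_n\C\subset\overline{CP_n}\cdot\overline{P_nC}\subset J_n$. This is what the paper means when it says (3) follows directly from (1) and Lemma~\ref{lem:regular-corners}.

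Your first attempt can also be completed. The tail satisfies $\norm{a(1-P_{\leq m})JbJP_n}\leq\norm{a}\,\norm{(1-P_{\leq m})JbJP_n}\to0$, because $JbJP_n\in\bbX$ by your item (2) and $(P_{\leq m})_m$ is an approximate unit for $\bbX$. As written, though, you explicitly leave this unresolved, so (3) is not established by your proposal.
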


\begin{proof}
\hyperref[item:word-length-regularity]
{(\ref*{item:word-length-regularity})}
By the Haagerup inequality for $q$-Gaussian algebras, for every
\(n\geq0\) there is
\(d_{q,n}<\infty\) such that
\[
 W(P_n\xi)\Omega=P_n\xi,\qquad
 \norm{W(P_n\xi)}_\infty
 \leq d_{q,n}\norm{P_n\xi}_2
 \leq d_{q,n}\norm{\xi}_2
\]
for every \(\xi\in L^2M\); see
\cite[Proposition~2.1(b)]{Boz99}.  Hence
\(P_n(L^2M)\subset\widehat M\), so \(P_n\) is \(M\)-regular.

\hyperref[item:word-length-multipliers]
{(\ref*{item:word-length-multipliers})}
Note that \(A,JAJ\subset\mathcal M(\bbX)\).  It then follows from
part~(\ref{item:word-length-regularity}) and
Lemma~\ref{lem:regular-corners} that \(xP_n,P_nx\in\bbX\) for
every \(x\in M\cup JMJ\), and hence
\(M,JMJ\subset\mathcal M(\bbX)\).

\hyperref[item:word-length-ideals]
{(\ref*{item:word-length-ideals})}
It suffices to see that
\(x_1Jy_1JP_nJy_2Jx_2\in\overline{CP_nC}^{\,\norm{\cdot}}\) for
\(x_i,y_i\in M\), which follows directly from
part~(\ref{item:word-length-regularity}) and
Lemma~\ref{lem:regular-corners}.  The reverse inclusion follows from
\(C\subset\C\).

\hyperref[item:word-length-corners]
{(\ref*{item:word-length-corners})}
It suffices to see that \(P_nxJyJP_n\in P_nCP_n\) for
\(x,y\in M\), which follows directly from
part~(\ref{item:word-length-regularity}) and the second continuity
assertion of Lemma~\ref{lem:regular-corners}.  The reverse inclusion
is immediate.
\end{proof}

\begin{proposition}
\label{prop:ao-first-word-length-layer}
The projection
\(P_1+\mathcal K\) is a nonzero minimal projection of
\(\mathcal X/\mathcal K\).

Moreover, if \(e\in\mathcal X\) is an \(M\)-regular projection such that
\(e+\mathcal K\) is a nonzero minimal projection of
\(\mathcal X/\mathcal K\), then \(e\in J_1\), and
\(e+\K(\F)\) is a minimal projection of
\(J_1/\K(\F)\).
\end{proposition}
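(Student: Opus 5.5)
We transfer the structure of Theorem~\ref{thm:length-filtration-main} from \(X/\K(\F)\) to \(\mathcal X/\mathcal K\) by compressing with \(M\)-regular projections. The tool is Lemma~\ref{lem:regular-corners}: if \(P\) and \(e\) are \(M\)-regular, then \(e\,\overline{\bbX}^{\infty,1}\,e\subset\bbX\). Indeed, the map \(T\mapsto eTe\) is continuous from \(\norm{\cdot}_{\infty,1}\) to \(\norm{\cdot}\), and it sends \(\bbX\) into \(\bbX\), since \(\bbX\) is hereditary and \(e\) is a multiplier. The same argument gives \(e\,\overline{\K}^{\infty,1}e\subset\K(\F)\).

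For the first assertion, \(P_1\in C\cap\bbX\subset\mathcal X\). The class \(P_1+\mathcal K\) is nonzero: if \(P_1\in\mathcal K\), then \(P_1=P_1P_1P_1\in\K(\F)\) by the compression remark, which is false because \(H\) is infinite dimensional. For minimality, it suffices to show \(P_1\mathcal XP_1\subset\bbC P_1+\mathcal K\). Lemma~\ref{lem:word-length-bridge}(\ref{item:word-length-corners}) gives \(P_1\mathcal C P_1=P_1CP_1\). The proof of Theorem~\ref{thm:length-filtration-main}(2) with \(n=1\) gives \(P_1CP_1\subset\bbC P_1+J_0=\bbC P_1+\K(\F)\). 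Since \(\K(\F)\subset\mathcal K\), the inclusion follows.

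For the second assertion, let \(e\) be \(M\)-regular with \(e+\mathcal K\) minimal and nonzero in \(\mathcal X/\mathcal K\). First, \(e=e\,e\,e\in e\,\overline{\bbX}^{\infty,1}e\subset\bbX\), and \(e\in\mathcal C\). To show \(e\in X\), I approximate \(e\) by \(P_{\le N}eP_{\le N}\). Since \(e\) is a projection in \(\bbX\), the approximate identity gives \(\norm{e-P_{\le N}eP_{\le N}}\to0\). Each term lies in \(P_{\le N}\mathcal C P_{\le N}\), which equals \(P_{\le N}CP_{\le N}\) by Lemma~\ref{lem:word-length-bridge}(\ref{item:word-length-corners}) applied to the finitely many corners \(P_j\mathcal CP_k\). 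Hence \(e\in C\cap\bbX=X\).

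Next I place \(e\) in the first layer. Theorem~\ref{thm:length-filtration-main}(3) with \(n=1\) says \(J_1/\K(\F)\) is the least nonzero ideal of \(X/\K(\F)\). It seems easiest to compare with \(P_1\) directly. By minimality in \(\mathcal X/\mathcal K\), the ideal of \(\mathcal X/\mathcal K\) generated by \(e\) contains \(P_1+\mathcal K\), or the two projections are orthogonal modulo \(\mathcal K\). Conversely, minimality of \(P_1\) in a simple enough layer should force \(e\sim P_1\) modulo \(\mathcal K\). The cleaner route is to use \(e\in X\) together with the fact that \((e+\K(\F))\,(X/\K(\F))\,(e+\K(\F))\) is one-dimensional. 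This holds because \(eXe\subset e\mathcal Xe\subset\bbC e+e\mathcal Ke\subset\bbC e+\K(\F)\), using the compression remark for \(\overline{\K}^{\infty,1}\). So \(e+\K(\F)\) is a minimal projection of \(X/\K(\F)\), and it is nonzero since \(e\notin\mathcal K\).

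The main obstacle is showing that a minimal projection of \(X/\K(\F)\) lies in \(J_1/\K(\F)\). The plan is to suppose \(e\notin J_1\) and let \(s\ge2\) be least with \(e\in J_s\), obtained after cutting by \(P_{\le N}\) as in the proof of Theorem~\ref{thm:length-filtration-main}(3). The hereditary subalgebra \(eXe\) is then \(\bbC e+\K\). Its image in \(J_s/J_{s-1}\cong\K(E_s)\) is a minimal projection, equivalent there to \(p_s\). Transporting the descent argument, compressing by \(P_s\ell(h)P_{s-1}\) produces a nonzero element of \(eXe\) modulo \(J_{s-2}\) that is not in \(J_{s-1}\) and not a multiple of \(e\). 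This contradicts one-dimensionality.

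Alternatively, and I would try this first: the ideal generated by \(e+\K(\F)\) is nonzero, so it contains \(J_1/\K(\F)\) by Theorem~\ref{thm:length-filtration-main}(3). Then \((e+\K)X(P_1+\K)\neq0\). A nonzero \(x=eyP_1\) modulo compacts gives \(x^*x\in P_1XP_1+\K=\bbC P_1+\K\) and \(xx^*\in\bbC e+\K\). So \(e\) and \(P_1\) are Murray--von Neumann equivalent modulo \(\K(\F)\) via \(x\in X\). Therefore \(e+\K(\F)=xx^*+\K(\F)\in J_1/\K(\F)\), with \(e\in J_1+\K=J_1\), and \(e+\K(\F)\) is minimal in \(J_1/\K(\F)\).
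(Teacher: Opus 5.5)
Your argument is correct. Its skeleton matches the paper's proof:
\begin{itemize}
\item minimality of $P_1+\mathcal K$ from $P_1\C P_1=P_1CP_1\subset\bbC P_1+\K(\F)$;
\item $e\in\bbX$ from the $\norm{\cdot}_{\infty,1}$-to-norm continuity of compression by an $M$-regular projection;
\item $e\in X$ via the approximate identity $P_{\le N}$;
\item Theorem~\ref{thm:length-filtration-main}(3) to reach $J_1/\K(\F)$.
\end{itemize}
Several sub-steps, however, are done differently.

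\emph{Nonvanishing of $P_1+\mathcal K$.} You reuse the compression device, $P_1\overline{\K(\F)}^{\infty,1}P_1\subset\K(\F)$, instead of the paper's appeal to \cite[Proposition~3.8]{DKEP22}. This is more self-contained, and it is the same trick the paper itself uses to show $z=eze\in\K(\F)$.

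\emph{Minimality of $e+\K(\F)$.} You prove it only in $X/\K(\F)$, via $X\subset\mathcal X$. The paper proves it in $\C/\K(\F)$, via $\mathcal X\lhd\C$. The weaker statement suffices.

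\emph{Final step.} The paper notes that the ideal $L$ generated by $e+\K(\F)$ is simple, being generated by a minimal projection, and hence equals $J_1/\K(\F)$. Your last paragraph instead uses $P_1+\K(\F)\in L$ to find $y\in X$ with $x=eyP_1\notin\K(\F)$. Then $xx^*\in J_1$ (since $P_1\in J_1\lhd X$) and $xx^*\equiv\mu e$ modulo $\K(\F)$. This bypasses the simplicity fact. It also shows that $e$ and $P_1$ are Murray--von Neumann equivalent modulo compacts through an element of $J_1$, which is the viewpoint the paper develops later in Proposition~\ref{prop:first-layer-real-minimal-projection}.

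A few small repairs are needed.
\begin{itemize}
\item Lemma~\ref{lem:word-length-bridge}(4) is stated only for diagonal corners. The off-diagonal version $P_j\C P_k=P_jCP_k$ follows from the same proof, using the bound $\norm{P_jTP_k}\le c_jc_k\norm{T}_{\infty,1}$. Alternatively, use the paper's shortcut $P_{\le N}\in J_N\lhd\C$.
\item That $e$ multiplies $\bbX$ should be justified by $e\in\C$ together with Lemma~\ref{lem:word-length-bridge}(2). Heredity of $\bbX$ plays no role here.
\item In the equivalence step, say explicitly why $\mu\neq0$: if $\mu=0$, then $xx^*$ would be compact, forcing $x\in\K(\F)$. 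No normalization is needed; $e=\mu^{-1}(xx^*-k)\in J_1$ for some compact $k$.
\item The exploratory material is unused and not carried out, and should be deleted. This covers the ``contains $P_1$ or orthogonal'' dichotomy and the descent sketch through $J_s$. Only your final paragraph is needed.
\end{itemize}
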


\begin{proof}
Since \(P_1\in\C\cap\bbX\subset\mathcal X\),
part~(\ref{item:word-length-corners}) of
Lemma~\ref{lem:word-length-bridge} and
part~(\ref{item:length-filtration-minimal-projections}) of
Theorem~\ref{thm:length-filtration-main} give
\[
 P_1\C P_1=P_1CP_1
 \subset\bbC P_1+\K(\F)
 \subset\bbC P_1+\mathcal K.
\]
Thus \(P_1+\mathcal K\) is minimal in \(\mathcal X/\mathcal K\).
To see that it is nonzero, choose an orthonormal sequence
\((\xi_i)\subset H_{\bbR}\).  By
\cite[Proposition~3.8]{DKEP22}, one has
\(\norm{TW(\xi_i)\Omega}_2\to0\) for every \(T\in\mathcal K\), while
\(P_1W(\xi_i)\Omega=\xi_i\).

Now let \(e\) satisfy the hypotheses of the second assertion.  We first
show that \(e+\K(\F)\) is a nonzero minimal projection in
\(\C/\K(\F)\).  For \(x\in\C\), one has
\(exe\in\mathcal X\) because
\(\mathcal X\lhd\C\) by
Theorem~\ref{thm:akemann-ostrand-kernel}.
Minimality of \(e+\mathcal K\) in
\(\mathcal X/\mathcal K\) therefore gives \(\lambda_x\in\bbC\) such that
\(z=exe-\lambda_xe\in\mathcal K\).  Choose \(K_i\in\K(\F)\) with
\(\norm{K_i-z}_{\infty,1}\to0\).  Since \(z=eze\), the second
continuity assertion of Lemma~\ref{lem:regular-corners} then shows
that \(z\in\K(\F)\), from which it follows that \(e+\K(\F)\) is a
nonzero minimal projection in \(\C/\K(\F)\).

Next, we claim that \(e\in X\).  Choose \(T_i\in\bbX\) such that
\(\norm{T_i-e}_{\infty,1}\to0\).  Then \(eT_ie\in\bbX\) by
part~(\ref{item:word-length-multipliers}) of
Lemma~\ref{lem:word-length-bridge}, and the second continuity
assertion of Lemma~\ref{lem:regular-corners} gives \(eT_ie\to e\) in
operator norm, and thus \(e\in\bbX\).  By
Notation~\ref{not:q-gaussian-notation},
\((P_{\leq N})_N\) is an approximate identity for \(\bbX\), so
\(P_{\leq N}eP_{\leq N}\to e\) in operator norm.
Part~(\ref{item:length-filtration-chain}) of
Theorem~\ref{thm:length-filtration-main}, together with
Notation~\ref{not:q-gaussian-notation}, gives
\(P_{\leq N}\in J_N\), while
part~(\ref{item:word-length-ideals}) of
Lemma~\ref{lem:word-length-bridge} gives \(J_N\lhd\C\).  Hence
\(P_{\leq N}eP_{\leq N}\in J_N\), and therefore
\(e\in\overline{\bigcup_NJ_N}^{\,\norm{\cdot}}=X\).
Part~(\ref{item:word-length-ideals}) of
Lemma~\ref{lem:word-length-bridge}, together with
part~(\ref{item:length-filtration-chain}) of
Theorem~\ref{thm:length-filtration-main}, also shows that
\(X\lhd\C\).

Let \(L\) be the ideal of \(\C/\K(\F)\) generated by
\(e+\K(\F)\).  Since \(e\in X\lhd\C\), the ideal \(L\) is
nonzero and contained in \(X/\K(\F)\), so
part~(\ref{item:length-filtration-least-ideal}) of
Theorem~\ref{thm:length-filtration-main} gives
\(J_1/\K(\F)\subset L\).  Since \(L\) is generated by a
minimal projection, it is simple.  Therefore \(L=J_1/\K(\F)\), and
hence \(e\in J_1\) and \(e+\K(\F)\) is minimal in
\(J_1/\K(\F)\).
\end{proof}

Before proving the \hyperlink{thm:main}{main theorem}, we first reduce its proof to the
separable case.

\begin{lemma}
\label{lem:separable-reduction}
Let \(H_{\bbR}\) and \(K_{\bbR}\) be infinite-dimensional real
Hilbert spaces and \(q,r\in(-1,1)\), where \(H_{\bbR}\) is
nonseparable and \(K_{\bbR}\) is separable. If \(M_q(H_{\bbR})\) is
isomorphic to \(M_r(H_{\bbR})\), then \(M_q(K_{\bbR})\) is
isomorphic to \(M_r(K_{\bbR})\).
\end{lemma}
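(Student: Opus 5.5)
We plan to use a standard separability argument based on the fact that an isomorphism between $q$-Gaussian algebras of a nonseparable space is determined, on separable pieces, by countably many generators. Let $\theta:M_q(H_{\bbR})\to M_r(H_{\bbR})$ be a $*$-isomorphism. For a closed subspace $L_{\bbR}\subset H_{\bbR}$, $M_q(L_{\bbR})$ sits inside $M_q(H_{\bbR})$ as the von Neumann subalgebra generated by $\{s(h):h\in L_{\bbR}\}$. Each element $x\in M_q(H_{\bbR})$ lies in $M_q(L_{\bbR})$ for some separable $L_{\bbR}$. To see this, note that $x\Omega\in\mathcal F_q(H)$ involves only countably many directions. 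Moreover, $x$ is determined by $x\Omega$, and $E_{L}(x)\Omega=P_{\mathcal F_q(L)}x\Omega$, where $E_L$ is the trace-preserving conditional expectation. Hence $x=E_L(x)$ whenever $x\Omega\in\mathcal F_q(L)$.

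We run a back-and-forth construction. Start with a separable infinite-dimensional $L_0\subset H_{\bbR}$. Given $L_{2k}$, choose a countable ultraweakly dense set of generators $\{s(h_j)\}$ of $M_q(L_{2k})$, using a countable dense subset $\{h_j\}$ of $L_{2k}$. The images $\theta(s(h_j))$ lie in $M_r(L'_{2k+1})$ for some separable $L'_{2k+1}\supset L_{2k}$, by the observation above applied countably many times. Symmetrically, apply $\theta^{-1}$ to generators of $M_r(L'_{2k+1})$ to get a separable $L_{2k+2}\supset L'_{2k+1}$, and continue. Let $L=\overline{\bigcup_k L_k}$, which is separable and infinite-dimensional. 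Then $\theta(M_q(L_{2k}))\subset M_r(L_{2k+1})\subset M_r(L)$ and $\theta^{-1}(M_r(L_{2k+1}))\subset M_q(L_{2k+2})\subset M_q(L)$. Since $M_q(L)$ is generated by the increasing union of the $M_q(L_k)$, and $\theta$ is normal, we get $\theta(M_q(L))=M_r(L)$.

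Finally, $M_q(L)\cong M_q(K_{\bbR})$ and $M_r(L)\cong M_r(K_{\bbR})$ by second quantization of a real orthogonal isomorphism $L\cong K_{\bbR}$, because all separable infinite-dimensional real Hilbert spaces are isometric. This gives the claim.

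The main point needing care is the claim that each $x$ lies in $M_q(L)$ for a separable $L$. This uses that the conditional expectation onto $M_q(L)$ is implemented by the projection onto $\mathcal F_q(L)$, namely $E_L(x)\Omega=P_{\mathcal F_q(L)}x\Omega$, which is standard functoriality of the $q$-Fock construction under isometries. The same fact gives that $M_q(L)$ is the ultraweak closure of $\bigcup_k M_q(L_k)$: the projections onto $\mathcal F_q(L_k)$ increase strongly to the projection onto $\mathcal F_q(L)$, so $E_{L_k}(x)\to x$ in $\|\cdot\|_2$ for $x\in M_q(L)$.
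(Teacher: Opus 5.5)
Your proposal is correct and follows essentially the same back-and-forth argument as the paper. The key step is the same: the conditional expectation $E_L$, with $E_L(x)\Omega$ the projection of $x\Omega$ onto $\mathcal F_q(L)$, together with $\Omega$ being separating, places countably generated subalgebras inside $M_q(L)$ for a separable $L$. The differences are cosmetic: you interleave a single chain to get $\theta(M_q(L))=M_r(L)$ for one subspace $L$, whereas the paper uses two chains and obtains $\theta(M_q(K_\infty))=M_r(L_\infty)$, and you work with countable generating sets where the paper uses separable subalgebras via separability of $L^2(N)$.
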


\begin{proof}
Let \(\theta:M_q(H_{\bbR})\to M_r(H_{\bbR})\) be an isomorphism.
It suffices to find increasing sequences \((K_n)_{n\geq0}\) and
\((L_n)_{n\geq0}\) of separable infinite-dimensional closed real
subspaces of \(H_{\bbR}\) such that
\[
 \theta(M_q(K_n))\subset M_r(L_n),
 \quad\text{and}\quad
 \theta^{-1}(M_r(L_n))\subset M_q(K_{n+1})
\]
for every \(n\geq0\).

Indeed, put \(K_\infty=\overline{\bigcup_nK_n}\) and
\(L_\infty=\overline{\bigcup_nL_n}\). It follows that
\[
\begin{aligned}
 \theta(M_q(K_\infty))
 &=\left(\bigcup_n\theta(M_q(K_n))\right)''
  \subset\left(\bigcup_nM_r(L_n)\right)''
  =M_r(L_\infty) \\
 &\subset\left(\bigcup_n\theta(M_q(K_{n+1}))\right)''
  =\theta(M_q(K_\infty)).
\end{aligned}
\]

It remains to construct these sequences. We first claim that, for every
separable von Neumann subalgebra \(N\subset M_q(H_{\bbR})\), there
exists a separable closed real subspace \(L_{\bbR}\subset H_{\bbR}\)
such that \(N\subset M_q(L_{\bbR})\). Indeed, first observe that for
every \(\xi\in\mathcal F_q(H)\), there exists a separable closed real
subspace \(L_{\xi,\bbR}\subset H_{\bbR}\) such that
\(\xi\in\mathcal F_q(L_\xi)\). Since \(L^2(N)\) is separable, it
follows that there exists a separable closed real subspace
\(L_{\bbR}\subset H_{\bbR}\) such that
\(L^2(N)\subset\mathcal F_q(L)\). Denote by
\(E_L:M_q(H_{\bbR})\to M_q(L_{\bbR})\) the canonical conditional
expectation. It follows that \(E_L|_N=\operatorname{id}_N\), from which
the claim follows.

Choose separable infinite-dimensional closed real subspaces
\(K_0,L_{-1}\subset H_{\bbR}\). We construct \(K_n\) and \(L_n\)
consecutively. Suppose that \(K_n\) and \(L_{n-1}\) have been
constructed. Since \(\theta(M_q(K_n))\) is separable, there exists a
separable closed real subspace \(L_n'\subset H_{\bbR}\) such
that \(\theta(M_q(K_n))\subset M_r(L_n')\). Put
\(L_n=\overline{\operatorname{span}_{\bbR}
(L_{n-1}\cup L_n')}\). Likewise, since
\(\theta^{-1}(M_r(L_n))\) is separable, there exists a separable closed
real subspace \(K_{n+1}'\subset H_{\bbR}\) such that
\(\theta^{-1}(M_r(L_n))\subset M_q(K_{n+1}')\). Put
\(K_{n+1}=\overline{\operatorname{span}_{\bbR}
(K_n\cup K_{n+1}')}\). This completes the construction, and the
conclusion follows since \(K_\infty\), \(L_\infty\), and
\(K_{\bbR}\) are all separable infinite-dimensional real Hilbert
spaces.
\end{proof}

\begin{proof}[Proof of the Main Theorem]
The implication from $q=r$ is immediate. By
\cite{Cas23} and Lemma~\ref{lem:separable-reduction}, it
suffices to assume that $H_{\bbR}$ is separable and infinite-dimensional
and that $q,r\in(-1,1)\setminus\{0\}$.

Let $\theta:M_q\to M_r$ be an isomorphism. Since $M_q$ and $M_r$ are
factors, $\theta$ preserves their traces, so the standard unitary
$W_\theta:L^2(M_q)\to L^2(M_r)$, given by
$W_\theta(x\Omega_q)=\theta(x)\Omega_r$, intertwines the canonical
conjugations. Hence $\operatorname{Ad}(W_\theta)$ sends $\C_q$ onto
$\C_r$, preserves the $\|\cdot\|_{\infty,1}$-norm, and sends
$\mathcal K_q$ onto $\mathcal K_r$. Since it also intertwines the
multiplication maps, Theorem~\ref{thm:akemann-ostrand-kernel} gives
$\operatorname{Ad}(W_\theta)(\mathcal X_q)=\mathcal X_r$.

Set $e=W_\theta P_1^{(q)}W_\theta^\ast$. By
Proposition~\ref{prop:ao-first-word-length-layer}, $e+\mathcal K_r$ is a
nonzero minimal projection of $\mathcal X_r/\mathcal K_r$. Moreover,
$e$ is $M_r$-regular by
part~(\ref{item:word-length-regularity}) of
Lemma~\ref{lem:word-length-bridge}. The second assertion of
Proposition~\ref{prop:ao-first-word-length-layer}, together with the fact
that $W_\theta$ intertwines the conjugations, vacuum vectors, and
multiplication, shows that $e$ is a first chaos projection for $M_r$
satisfying $Q_e^\ast Q_e=I+qF_e$.

Proposition~\ref{prop:first-chaos-parameter} therefore gives
$q\in r[-r^2,1]$. Applying the same argument to $\theta^{-1}$ gives
$r\in q[-q^2,1]$. If $q,r>0$, then $r\leq q\leq r$, while if
$q,r<0$, the analogous inequalities again give $q=r$.

Finally, their signs cannot be opposite. If $q>0>r$, then the two
inclusions give $|r|\leq q^3$ and $q\leq|r|^3$, so
$q\leq|r|^3\leq q^9<q$, a contradiction. The case $r>0>q$ is
symmetric. Therefore $q=r$.
\end{proof}

\begin{remark}
The argument above appears adaptable to the stable-isomorphism setting.
Nevertheless, we do not pursue this extension here, as it would require
substantial additional notation and bookkeeping.
\end{remark}

Combining the \hyperlink{thm:main}{main theorem} with the unique trace property of $A_q$
\cite[Theorem~4.1]{BCKW22}, we have the following.

\begin{corollary}
Let $H_{\bbR}$ be an infinite-dimensional real Hilbert space and
$q,r\in(-1,1)$.
Then the $q$-Gaussian $C^*$-algebra $A_q(H_{\bbR})$ is isomorphic to
$A_r(H_{\bbR})$ if and only if $q=r$.
\end{corollary}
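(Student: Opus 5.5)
The plan is to reduce the \(C^\ast\)-statement to the Main Theorem through the unique trace property of \(A_q(H_{\bbR})\). The implication from \(q=r\) is trivial, so suppose \(\theta:A_q(H_{\bbR})\to A_r(H_{\bbR})\) is a \(\ast\)-isomorphism.

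By \cite[Theorem~4.1]{BCKW22}, \(A_r(H_{\bbR})\) has a unique tracial state, namely the vacuum state \(\tau_r\). The composition \(\tau_r\circ\theta\) is a tracial state on \(A_q(H_{\bbR})\). Since \(A_q(H_{\bbR})\) also has a unique trace, \(\tau_r\circ\theta=\tau_q\).

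Consequently \(\theta\) intertwines the GNS representations of the two vacuum traces. Concretely, the map
\[
 x\Omega_q\longmapsto\theta(x)\Omega_r
\]
is isometric on \(A_q(H_{\bbR})\Omega_q\), because \(\tau_r(\theta(x)^\ast\theta(x))=\tau_q(x^\ast x)\). Its range \(A_r(H_{\bbR})\Omega_r\) is dense, since \(\Omega_r\) is cyclic. So the map extends to a unitary \(W:\mathcal F_q(H)\to\mathcal F_r(H)\). One checks on the dense subspace \(A_q(H_{\bbR})\Omega_q\) that
\[
 W x W^\ast=\theta(x)\qquad(x\in A_q(H_{\bbR})).
\]

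It follows that \(\operatorname{Ad}(W)\) maps \(A_q(H_{\bbR})''=M_q(H_{\bbR})\) onto \(A_r(H_{\bbR})''=M_r(H_{\bbR})\). Here we use that spatial isomorphisms preserve bicommutants, and that the \(q\)-Gaussian von Neumann algebra is by definition the bicommutant on the \(q\)-Fock space, which is exactly the vacuum GNS space.

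Hence \(M_q(H_{\bbR})\cong M_r(H_{\bbR})\), and the Main Theorem gives \(q=r\).

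The only step needing care is the appeal to the unique trace property for all \(q\in(-1,1)\), including \(q=0\) and nonseparable \(H_{\bbR}\). For \(q=0\) this is the classical fact that the reduced free group \(C^\ast\)-algebra is simple with unique trace. For nonseparable \(H_{\bbR}\), one either uses that \cite{BCKW22} covers infinite dimensions in general, or argues via an inductive limit over separable subspaces: a trace on the union restricts to traces on the subalgebras \(A_q(K_{\bbR})\), each of which is uniquely determined. This is a minor bookkeeping point rather than a real obstacle.
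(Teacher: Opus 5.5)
Your proposal is correct and follows the paper's own argument. The paper also combines the unique trace property \cite[Theorem~4.1]{BCKW22} with the Main Theorem: uniqueness of the trace forces $\theta$ to preserve the vacuum traces, so $\theta$ is spatially implemented on the Fock spaces and extends to an isomorphism $M_q(H_{\bbR})\cong M_r(H_{\bbR})$, and the Main Theorem then gives $q=r$.
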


\appendix
\section{An ordered swap estimate on \(q\)-Fock space}
\label{sec:ordered-swap-estimate}

Let \(H_{\bbR}\) be a real Hilbert space and set
\(H=H_\bbR\oplus iH_\bbR\). Fix \(q\in(-1,1)\) and orthonormal
vectors \(h_1,h_2\in H_{\bbR}\). Let \(U\in\mathcal O(H_{\bbR})\)
interchange \(h_1\) and \(h_2\), fix \(\{h_1,h_2\}^\perp\)
pointwise, and let \(S=\mathcal F_q(U)\) be the second quantization operator
\[ S=\mathcal F_q(U): \F \to \F,\quad \xi_1\otimes \cdots \otimes \xi_n\mapsto (U\xi_1)\otimes \cdots \otimes (U\xi_n). \]

The motivation for considering this swap is the elementary identity 
\[
\langle S(h_1\otimes h_2), (h_1\otimes h_2)\rangle_q=q
\]
in $q$-Fock space. 
The purpose of this appendix is to show that this dependence on $q$ persists 
in the following general case.

\begin{proposition}
\label{prop:ordered-swap-estimate}
For every nonzero \(\xi\in K_+\),
one has \(\langle S\xi,\xi\rangle_q/\norm{\xi}_q^2\in q[-q^2,1]\),
where 
\[
 K_+
 =\overline{\operatorname{span}}^{\,\norm{\cdot}_q}
 \left\{
 \eta_1\otimes\cdots\otimes\eta_n
 \ \middle|\
 \begin{aligned}
 &n\geq2,\ \text{and for some }1\leq i<j\leq n,\\
 &\eta_i=h_1,\qquad \eta_j=h_2,\\
 &\eta_k\in \{ h_1, h_2\}^\perp\quad\text{for every }k\notin\{i,j\}
 \end{aligned}
 \right\}
 \subset\F.
\]
\end{proposition}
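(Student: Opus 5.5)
The plan is to reduce the ratio to one scalar $B$ in a mixed $q$-Fock model, and then get the upper bound from positivity and the lower bound from a separate Gram-operator estimate. By continuity of both sides in $\xi$, it suffices to treat $\xi$ in the algebraic span of the displayed elementary tensors. Since those tensors have only finitely many lengths and $S$ preserves length, we may also fix the length $n$.

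\emph{Setup.} Write $H=\bbC h_1\oplus\bbC h_2\oplus K'$ with $K'=\{h_1,h_2\}^\perp$. In the permutation formula for $\langle\cdot,\cdot\rangle_q$, a permutation contributes to a pairing of two elementary tensors of the type in $K_+$ (or of their images under $S$) only if it sends the $h_1$-letter to the $h_1$-letter, the $h_2$-letter to the $h_2$-letter, and $K'$-letters to $K'$-letters. I would therefore split $\operatorname{inv}(\sigma)$ into three kinds of crossings: crossings among $K'$-strands, crossings between an $h_i$-strand and a $K'$-strand, and the single possible crossing of the $h_1$-strand with the $h_2$-strand. In $\langle\xi,\xi\rangle_q$ the $h_1$ and $h_2$ strands never cross, since both words have $h_1$ before $h_2$. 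In $\langle S\xi,\xi\rangle_q$ they always cross exactly once, since $S\xi$ has $h_2$ before $h_1$.

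\emph{Reduction to a scalar $B$.} Introduce the mixed (Bo\.zejko--Speicher) form $\langle\cdot,\cdot\rangle_{q,t}$, which has parameter $t$ for the $h_1$--$h_2$ crossing and $q$ for all other crossings. This form is positive semidefinite for $|t|\leq1$. By the splitting above, $\langle\xi,\xi\rangle_{q,t}=\norm{\xi}_q^2$ does not depend on $t$, while
\[
 \langle S\xi,\xi\rangle_{q,t}=t\,B,\qquad B:=\langle S\xi,\xi\rangle_{q,1}.
\]
In particular $\langle S\xi,\xi\rangle_q=qB$, and $B$ is real because $S$ is a $J$-commuting unitary involution. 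The claim is therefore equivalent to
\[
 -q^2\norm{\xi}_q^2\leq B\leq\norm{\xi}_q^2 .
\]

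\emph{Upper bound.} Take $t=1$, expand $\norm{\xi-S\xi}_{q,1}^2\geq0$, and note that $S$ is isometric for the mixed form. This gives $2\norm{\xi}_q^2-2B\geq0$, i.e.\ $B\leq\norm{\xi}_q^2$.

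\emph{Lower bound (main obstacle).} Positivity of $\norm{\xi+S\xi}_{q,1}^2$ only gives $B\geq-\norm{\xi}_q^2$, which is far from the required $B\geq-q^2\norm{\xi}_q^2$. I would attack this as follows.

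1. Let $\xi_h$ be the vector obtained by replacing both $h_1$ and $h_2$ by a single unit vector $h$. Then
\[
 \norm{\xi_h}_q^2=\norm{\xi}_q^2+q\,\langle S\xi,\xi\rangle_{q,1}=\norm{\xi}_q^2+qB,
\]
and the bound becomes a lower bound for a Gram-type quadratic form.

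2. Alternatively, and more robustly, decompose the $q$-Gram operator with the factorization $P^{(n)}=(1\otimes P^{(n-1)})R_n$, where $R_n=1+qU_1+q^2U_1U_2+\cdots$. Peel off the $h_1$-letter and the $h_2$-letter, write $\xi$ as a sum of terms $a\otimes h_1\otimes b\otimes h_2\otimes c$, and compute $B$ as a sum over the ways the two marked strands pass the intermediate block $b$. The point to exploit is that $\xi$ and $S\xi$ both contain the block between the two marked letters, and $S$ forces the marked strands to cross that block in opposite directions. This should produce an extra factor of $q$ per side, relative to the norm form, giving $|qB|\lesssim q^2\cdot(\text{norm form})$.

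3. The precise constant $q^2$ for the negative part is what I expect to need a genuine operator inequality. This is the estimate located in \cite{MS03}: a two-sided bound on the relevant off-diagonal block of the $q$-Gram operator in terms of the diagonal block. I would invoke it after reducing to the $4\times4$ block structure indexed by which marked letter comes first on each side.

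The main difficulty is exactly this last step: identifying the off-diagonal Gram block and matching it to the inequality in \cite{MS03} with constant $q^2$. Once that is done, the two bounds together give $\langle S\xi,\xi\rangle_q/\norm{\xi}_q^2=qB/\norm{\xi}_q^2\in q[-q^2,1]$.
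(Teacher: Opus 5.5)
\textbf{Verdict: genuine gap.} Your reduction is correct, and it is the same first step the paper takes. On $K_+$ the $h_1$ and $h_2$ strands never cross in $\langle\xi,\xi\rangle_q$ and cross exactly once in $\langle S\xi,\xi\rangle_q$. Hence $\langle S\xi,\xi\rangle_q=qB$. Your upper bound $B\leq\|\xi\|_q^2$ is also valid: it follows from positivity of the $t=1$ mixed form (Bo\.zejko--Speicher), with $S$ an isometric involution for that form.

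The gap is the lower bound $B\geq-q^2\|\xi\|_q^2$, which is the whole content of the proposition. Your proposal does not prove it.
\begin{itemize}
\item Step~1 is only a reformulation. $\|\xi_h\|_q^2=\|\xi\|_q^2+qB\geq0$ gives no more than $qB\geq-\|\xi\|_q^2$. The bound you need is equivalent to $\|\xi_h\|_q^2\geq(1-q^3)\|\xi\|_q^2$ for $q>0$, which is no easier.
\item The heuristic in Step~2 is false as stated. For $\xi=h_1\otimes h_2$ one has $B=1=\|\xi\|_q^2$, so no estimate $|B|\lesssim|q|\,\|\xi\|_q^2$ can hold. Only the negative part of $B$ is small, so any argument producing ``an extra factor of $q$ per side'' symmetrically cannot be right.
\item Step~3 defers to an inequality in \cite{MS03} that you never identify. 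The paper takes only two things from there, the Gram-matrix formula for distinct letters and the labelled-copies reduction, and proves the $-q^2$ bound itself.
\item A block decomposition ``indexed by which marked letter comes first'' is not the relevant structure, since on $K_+$ that letter is always $h_1$.
\end{itemize}

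The missing idea is to make the swap act locally, one unmarked letter at a time.
\begin{itemize}
\item For distinct orthonormal letters, $\langle\xi_w,\xi_v\rangle_q=q^{d(w,v)}$, where $d(w,v)$ counts pairs of letters in opposite relative order. So such words embed isometrically into a tensor product of copies of $\mathbb{C}^2$, one per pair of letters, with basis $d_0,d_1$ satisfying $\langle d_0,d_1\rangle=q$. Putting $t$ in place of $q$ on the $(h_1,h_2)$ factor gives exactly your $t$-form.
\item In this model $S$ flips the $(h_1,h_2)$ coordinate, which is your factor $q$. It leaves coordinates for pairs of unmarked letters unchanged. For each unmarked letter $f$, it acts as the tensor flip on the two coordinates indexed by $(f,h_1)$ and $(f,h_2)$.
\item On $K_+$, $f$ sits in one of three configurations: $fh_1h_2$, $h_1fh_2$, $h_1h_2f$. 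The compression $S_f$ of the flip to the span of the three corresponding vectors is self-adjoint with eigenvalues $1,1,-q^2$. The two outer configurations span the $1$-eigenspace. The part of the middle configuration orthogonal to them is the $-q^2$-eigenvector; this is where your Step~2 intuition points.
\item Hence $B=\langle(\bigotimes_f S_f)V\xi,V\xi\rangle$, where $\bigotimes_fS_f$ is self-adjoint with spectrum in $[-q^2,1]$. This gives both bounds at once.
\item Repeated unmarked letters are handled by symmetrizing over labellings with distinct orthonormal copies. This embedding is isometric and intertwines $S$ with the swap of the labelled copies of $h_1,h_2$.
\item The general case follows by splitting $\xi$ orthogonally according to the multiset of letters from $\{h_1,h_2\}^\perp$; $S$ respects this splitting.
\end{itemize}
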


In \cite{MS03}, Meljanac and Svrtan developed a useful combinatorial framework for studying Gram matrices in deformed Fock spaces, which, among other applications, recovers the positivity of the (q)-Fock inner product proved in \cite{BoSp94}. Our proof essentially uses this framework, particularly their Gram-matrix formula for distinct letters and their reduction of repeated letters to labelled copies \cite[Proposition~1.6.1 and Section~1.7]{MS03}. We give a self-contained argument adapted to our setting.

We first treat the case in which all letters are distinct.

\begin{lemma}
\label{lem:distinct-ordered-swap}
Let \(F=\{f_1,\ldots,f_m\}\subset\{h_1,h_2\}^{\perp}\) be a finite
orthonormal set. 
For every nonzero \(\xi\in K_+[F]\), one has
\(\langle S\xi,\xi\rangle_q/\norm{\xi}_q^2\in q[-q^2,1]\), where
\(K_+[F]\) is the span of the elementary tensors in which
\(h_1,h_2,f_1,\ldots,f_m\) occur exactly once, with \(h_1\) preceding
\(h_2\).
\end{lemma}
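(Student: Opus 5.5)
Since all letters are distinct, I will treat the Gram matrix as an element of the group algebra, in the spirit of \cite[Proposition~1.6.1]{MS03}. Put \(n=m+2\) and write the letters as \(h_1,h_2,f_1,\ldots,f_m\). Identify each elementary tensor in which every letter occurs once with the permutation \(\sigma\in S_n\) that records the order of the letters. The \(q\)-inner product is then
\[
\langle\sigma,\pi\rangle_q=q^{\inv(\sigma^{-1}\pi)}.
\]
So the Gram operator is right multiplication by \(T_n=\sum_{\pi\in S_n}q^{\inv(\pi)}\pi\) on \(\bbC[S_n]\). In this picture \(S\) is left multiplication by the transposition \(t\) of the labels of \(h_1\) and \(h_2\). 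This left action commutes with \(T_n\). Let \(V_-\) be the span of the words with \(h_2\) before \(h_1\). Then \(\bbC[S_n]=K_+[F]\oplus V_-\), and \(S\) interchanges the two summands.

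In block form with respect to this decomposition, and after identifying \(V_-\) with \(K_+[F]\) via \(S\), the Gram operator becomes \(\left(\begin{smallmatrix}A&B\\B^*&A\end{smallmatrix}\right)\). Here \(A\) is the Gram matrix of \(K_+[F]\), and \(B\) is the matrix \(\langle S\sigma,\pi\rangle_q\) for \(\sigma,\pi\in K_+[F]\). The claim is then equivalent to a two-sided operator inequality between \(\operatorname{Re}B\) and \(A\). For \(q>0\) this reads \(-q^3A\le\operatorname{Re}B\le qA\), and for \(q<0\) the inequalities are reversed. Since \(B\) is self-adjoint by the symmetry \(J\) (all letters are real), I only need two positivity statements:
\[
qA-B\ \geq 0\quad\text{and}\quad B+q^3A\ \geq 0\quad(q>0),
\]
with the corresponding versions for \(q<0\).

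To prove these, I will use the Bo\.zejko--Speicher factorization \(T_n=(T_{n-1}\otimes 1)R_n\) with \(R_n=1+qt_{n-1}+q^2t_{n-1}t_{n-2}+\cdots\). Iterating it, I will arrange the decomposition so that the two distinguished letters are ``annihilated'' first. Concretely, I will compute \(\langle\zeta,\zeta'\rangle_q\) by first applying \(\ell(h_1)^*\) and then \(\ell(h_2)^*\). Using \(\ell(h_i)^*(\eta_1\otimes\cdots\otimes\eta_k)=\sum_j q^{j-1}\langle h_i,\eta_j\rangle\,\eta_1\otimes\cdots\widehat{\eta_j}\cdots\otimes\eta_k\), this expresses both \(A\) and \(B\) through the Gram matrix \(G_m\) of the words in \(f_1,\ldots,f_m\), which is positive definite, together with explicit weights \(q^{a}\) depending on the positions of \(h_1,h_2\).

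The expected outcome is the following. For the pair \((\zeta,\zeta')\), \(A\) carries weights \(q^{(i-1)+(j-2)}\) and \(B\) carries weights \(q^{(i-1)+(j-1)}\), where \(h_1\) sits in position \(i\) and \(h_2\) in position \(j>i\). The ordering constraint \(i<j\) produces an extra factor of \(q\) in \(B\) relative to \(A\), up to boundary corrections. I expect this to give an identity of the form
\[
B=q\,A-(1-q^2)\,q\,C,
\]
with \(C\) a positive operator dominated by a multiple of \(A\). Here \(C\) should be a compression of \(G_m\otimes\) a \(q\)-Toeplitz kernel in the positions.

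The upper bound \(B\le qA\) for \(q>0\) then follows from \(C\ge0\). The main obstacle is the lower bound \(B\ge -q^3A\), which needs the sharp estimate \(qC\le(1+q^2)^{-1}\cdots A\). Equivalently, I need the exact extremal constant \(q^3\) for the smallest eigenvalue of the relevant \(2\)-position kernel. For this I will first try to reduce to the case where the \(f\)-letters are treated as a single block, using positivity of \(G_m\) and the Schur product theorem. That leaves a scalar kernel \(k(i,j;i',j')\) in the positions of \(h_1,h_2\), whose extreme Rayleigh quotients I will compute directly. My guess is that the extremes are attained on the words \(h_1\otimes h_2\), giving \(q\), and \(h_1\otimes f\otimes h_2\)-type words as \(m\to\infty\), giving \(-q^3\). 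The case \(q<0\) is handled by the same computation with the inequalities reversed.
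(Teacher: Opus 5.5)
Your reduction to the two inequalities $-q^3A\le B\le qA$ (for $q>0$) is correct, but there is a genuine gap after that. The identity $B=qA-(1-q^2)qC$, the positivity of $C$, and above all the sharp bound needed for the lower inequality are only stated as expectations, and the mechanism you propose for them does not work.

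First, computing inner products by applying $\ell(h_1)^*$ and then $\ell(h_2)^*$ is only legitimate when the left-hand vector begins with $h_1\otimes h_2$ (or $h_2\otimes h_1$). It yields $\langle S(h_1\otimes h_2\otimes u),\zeta'\rangle_q=q\,\langle h_1\otimes h_2\otimes u,\zeta'\rangle_q$ for all $\zeta'\in K_+[F]$, so it identifies directions where the quotient equals $q$. But these vectors span only an $m!$-dimensional subspace of the $(m+2)!/2$-dimensional space $K_+[F]$, so $A$ and $B$ are not determined.

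Second, the planned Schur-product reduction to a scalar kernel $k(i,j;i',j')$ in the positions of $h_1,h_2$ cannot work as described, because the Gram entries do not factor as $G_m(u,u')\,k(i,j;i',j')$. Take $\zeta=f\otimes h_1\otimes h_2\otimes g$ and $\zeta'=g\otimes h_1\otimes h_2\otimes f$, which have the same positions of $h_1,h_2$. Then $\langle\zeta,\zeta'\rangle_q=q^5$, whereas $\langle\zeta,\zeta\rangle_q=1$ forces $k(2,3;2,3)=1$ and $G_2(fg,gf)=q$. The exponent records, for each $f$, whether $f$ changes sides relative to $h_1$ and to $h_2$. That depends on which letter occupies which slot, not only on where $h_1,h_2$ sit.

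Your guess about extremizers is also off. Already for $m=1$ the value $-q^3$ is attained, by $-q\,f\otimes h_1\otimes h_2+(1+q^2)\,h_1\otimes f\otimes h_2-q\,h_1\otimes h_2\otimes f$. The single word $h_1\otimes f\otimes h_2$ has quotient $+q^3$.

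The missing idea, which is how the paper argues, is to index the product structure by pairs of letters rather than by positions. Since $\langle\xi_w,\xi_v\rangle_q=q^{d(w,v)}$, where $d(w,v)$ counts pairs of letters in opposite relative order, the map $V:\xi_w\mapsto\bigotimes_{p}d_{\varepsilon_p(w)}$ into $\bigotimes_pD_p$ is isometric. Here $D=\bbC^2$ and $\langle d_0,d_1\rangle=q$.

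In this model the swap becomes local:
\begin{itemize}
\item the coordinate of the pair $(h_1,h_2)$ contributes the scalar $\langle d_1,d_0\rangle=q$;
\item pairs inside $F$ are untouched;
\item for each $f\in F$ the swap is the tensor flip on $D_{(f,h_1)}\otimes D_{(f,h_2)}$.
\end{itemize}
Vectors of $K_+[F]$ only see the span of $d_0\otimes d_0$, $d_1\otimes d_0$, $d_1\otimes d_1$ in that last factor. There the compressed flip $S_f$ has matrix
\[
\begin{pmatrix}1&q&0\\0&-q^2&0\\0&q&1\end{pmatrix}
\]
with eigenvalues $1,1,-q^2$; this is exactly your $m=1$ computation. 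Hence $\langle S\xi,\xi\rangle_q=q\,\langle(\bigotimes_fS_f\otimes\mathrm{id})V\xi,V\xi\rangle$. The operator $\bigotimes_fS_f$ is self-adjoint with spectrum in $\{(-q^2)^k:k\geq0\}\subset[-q^2,1]$, which gives both inequalities at once. In effect the general case is the $m=1$ case tensored over $f\in F$, and the sharp constant $-q^3$ is $q$ times the eigenvalue $-q^2$ of a single local $3\times3$ block.
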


\begin{proof}
Let \(\mathcal W\) be the set of all words formed from
\(F\cup\{h_1,h_2\}\), with every vector occurring exactly once, and
let \(\mathcal W_+\subset\mathcal W\) consist of those words in which
\(h_1\) occurs before \(h_2\). Let $n=m+2$, for
\(w=(g_1,\ldots,g_n)\in\mathcal W\), write
\(\xi_w=g_1\otimes\cdots\otimes g_n\).  If
\(v=(g_{\sigma(1)},\ldots,g_{\sigma(n)})\) for
\(\sigma\in S_n\), set \(d(w,v)=\inv(\sigma)\).  Equivalently,
\(d(w,v)\) is the number of pairs of vectors whose relative orders in
\(w\) and \(v\) are opposite.

Since the vectors are distinct and orthonormal, exactly one term in
the defining sum for the \(q\)-inner product is nonzero.  Therefore
\(\langle\xi_w,\xi_v\rangle_q=q^{d(w,v)}\). This formula suggests recording the relative order of each pair in a
separate two-dimensional factor as following.

Fix a total order \(\prec\) on \(F\cup\{h_1,h_2\}\) such that
\(f\prec h_1\prec h_2\) for every \(f\in F\), and let
\[
 \mathcal P
 =\{(g,g'):g,g'\in F\cup\{h_1,h_2\},\ g\prec g'\}.
\]
For \(p=(g,g')\in\mathcal P\), define \(\varepsilon_p(w)\) to be
\(0\) if \(g\) occurs before \(g'\) in \(w\), and \(1\) otherwise.

Let \(D=\bbC^2\) with distinguished basis \(d_0,d_1\), equipped
with the inner product determined by \(\langle d_i,d_i\rangle=1\) for
\(i=0,1\) and
\(\langle d_0,d_1\rangle=\langle d_1,d_0\rangle=q\).  Using one copy
\(D_p\) of \(D\) for every \(p\in\mathcal P\), define
\[
 V(\xi_w)
 =\bigotimes_{p\in\mathcal P}d_{\varepsilon_p(w)}
 \in\bigotimes_{p\in\mathcal P}D_p.
\]
The number of coordinates in which \(V(\xi_w)\) and \(V(\xi_v)\)
differ is \(d(w,v)\).  Hence
\(\langle V(\xi_w),V(\xi_v)\rangle
=q^{d(w,v)}=\langle\xi_w,\xi_v\rangle_q\).  Thus \(V\) extends
linearly to an isometric embedding on
\(\operatorname{span}\{\xi_w:w\in\mathcal W\}\); in particular, it
is isometric on
\(K_+[F]=\operatorname{span}\{\xi_w:w\in\mathcal W_+\}\).

We now compute \(\langle S\xi_w,\xi_v\rangle_q\) in this tensor
model.  For \(w\in\mathcal W\), let \(Sw\) denote the word obtained
by interchanging \(h_1\) and \(h_2\), so that
\(S\xi_w=\xi_{Sw}\).  Under \(V\), this interchange reverses the
coordinate indexed by \((h_1,h_2)\), leaves unchanged every
coordinate indexed by a pair contained in \(F\), and, for each
\(f\in F\), interchanges the two coordinates indexed by
\((f,h_1)\) and \((f,h_2)\).  We first compute its effect for a fixed
\(f\).

Let \(f\in F\) and \(w\in\mathcal W_+\). Since \(h_1\) precedes \(h_2\), the letter \(f\) has
exactly three possible positions: before \(h_1\), between \(h_1\) and
\(h_2\), or after \(h_2\). More precisely, the order of
\(f,h_1,h_2\) in \(w\) is one of
\(fh_1h_2\), \(h_1fh_2\), and \(h_1h_2f\).
In \(D_{(f,h_1)}\otimes D_{(f,h_2)}\), these three cases correspond
respectively to
\[
 a_0^+=d_0\otimes d_0,\qquad
 a_1^+=d_1\otimes d_0,\qquad
 a_2^+=d_1\otimes d_1.
\]
Let \(L_{+,f}\subset D_{(f,h_1)}\otimes D_{(f,h_2)}\) be their
span.  In the word \(Sw\), the corresponding orders are
\(fh_2h_1\), \(h_2fh_1\), and \(h_2h_1f\),
and their coordinates in the same space are
\[
 a_0^-=d_0\otimes d_0,\qquad
 a_1^-=d_0\otimes d_1,\qquad
 a_2^-=d_1\otimes d_1.
\]
Thus the action of \(S\) on these pair coordinates is the tensor flip
\(d_i\otimes d_j\mapsto d_j\otimes d_i\).

The Gram matrix \(A\) of \(a_0^+,a_1^+,a_2^+\) and the matrix
\(C=(\langle a_i^+,a_j^-\rangle)_{i,j=0}^2\) are
\[
 A=
 \begin{pmatrix}
  1&q&q^2\\
  q&1&q\\
  q^2&q&1
 \end{pmatrix},
 \qquad
 C=
 \begin{pmatrix}
  1&q&q^2\\
  q&q^2&q\\
  q^2&q&1
 \end{pmatrix}.
\]
Let \(S_f\) denote the compression to \(L_{+,f}\) of the tensor
flip on \(D_{(f,h_1)}\otimes D_{(f,h_2)}\).  If \([S_f]\) denotes its matrix in the basis
\(a_0^+,a_1^+,a_2^+\), then
\(C_{ij}=\langle a_i^+,S_fa_j^+\rangle=(A[S_f])_{ij}\).  Consequently,
\[
 [S_f]
 =A^{-1}C
 =
 \begin{pmatrix}
  1&q&0\\
  0&-q^2&0\\
  0&q&1
\end{pmatrix}.
\]
This matrix has eigenvalues \(1\) and \(-q^2\).

Let \(B_F\) be the tensor product of the pair spaces \(D_{(f,f')}\)
over \(f,f'\in F\) with \(f\prec f'\).  After regrouping the factors,
\(V(K_+[F])\) is contained in
\[
 \bbC d_0
 \otimes
 \left(\bigotimes_{f\in F}L_{+,f}\right)
 \otimes B_F.
\]
For \(w,v\in\mathcal W_+\), the coordinate indexed by
\((h_1,h_2)\) contributes \(\langle d_1,d_0\rangle=q\).  Each
\(f\in F\) contributes the local operator \(S_f\), while the pair
spaces indexed by pairs contained in \(F\) contribute the identity.
Therefore, for every \(\xi\in K_+[F]\),
\[
 \langle S\xi,\xi\rangle_q
 =q\left\langle
 \left[
 \mathrm{id}_{\bbC d_0}
 \otimes
 \left(\bigotimes_{f\in F}S_f\right)
 \otimes \mathrm{id}_{B_F}
 \right]
 V(\xi),
 V(\xi)
 \right\rangle.
\]
Since the operator in brackets is self-adjoint and has spectrum contained
in \([-q^2,1]\) (as each $ S_f $ has eigenvalues $1$ and $-q^2$), the result follows.
\end{proof}

\begin{remark}
In fact, the same construction in the proof above gives a general pair-order
realization: If \(f_1,\ldots,f_n\in H\) are orthonormal, then the
subspace
\[
 \operatorname{span}\{
 f_{\sigma(1)}\otimes\cdots\otimes f_{\sigma(n)}
 :\sigma\in S_n
 \}
 \subset\F
\]
embeds isometrically into \(D^{\otimes\binom n2}\), where
\(D=\bbC^2\) has unit vectors \(d_0,d_1\) satisfying
\(\langle d_0,d_1\rangle=q\).
\end{remark}

We now remove the distinctness assumption by replacing repeated letters with
distinct labelled copies following the ideas of  \cite[Section~1.7]{MS03}.

\begin{lemma}
\label{lem:lift-embedding}
Fix \(q\in(-1,1)\). Let \(h_1,\ldots,h_n\) be elements of an
orthonormal basis of \(H\), with repetitions allowed, and set
\(\xi_0=h_1\otimes\cdots\otimes h_n\). Let
\[
K(\xi_0)=\operatorname{span}\{h_{\sigma(1)}\otimes\cdots\otimes
h_{\sigma(n)}:\sigma\in S_n\}
\subset\mathcal F_q(H).
\]
Choose distinct orthonormal vectors
\(\widetilde h_1,\ldots,\widetilde h_n\) in \(\ell^2(\mathbb N)\).
For \(\rho\in S_n\) and
\(\xi=h_{\rho(1)}\otimes\cdots\otimes h_{\rho(n)}\), define
\[
\operatorname{Lift}(\xi)=
\left\{
\widetilde h_{\sigma(1)}\otimes\cdots\otimes\widetilde h_{\sigma(n)}
:\sigma\in S_n, 
h_{\sigma(1)}\otimes\cdots\otimes h_{\sigma(n)}=\xi
\right\}.
\]
Then
\[
\iota\xi=
|\operatorname{Lift}(\xi)|^{-1/2}
\sum_{\widetilde\xi\in\operatorname{Lift}(\xi)}
\widetilde\xi
\]
extends linearly to an isometric embedding
\(\iota:K(\xi_0)\to\mathcal F_q(\ell^2(\mathbb N))\).
\end{lemma}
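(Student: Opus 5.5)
Since $\iota$ is defined on the spanning set $\{h_{\rho(1)}\otimes\cdots\otimes h_{\rho(n)}:\rho\in S_n\}$, which may be linearly dependent, I would prove the stronger statement that $\iota$ preserves inner products between spanning vectors:
\[
\langle\iota\xi,\iota\eta\rangle_q=\langle\xi,\eta\rangle_q\qquad\text{for all such }\xi,\eta .
\]
Well-definedness of the linear extension then follows at once. If $\sum_k c_k\xi_k=0$, then $\norm{\sum_k c_k\iota\xi_k}^2=\norm{\sum_k c_k\xi_k}^2=0$, and the same identity shows that $\iota$ is isometric.

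\textbf{Setup.} Let the distinct basis vectors among $h_1,\ldots,h_n$ be $e_1,\ldots,e_k$, with multiplicities $n_1,\ldots,n_k$. Write $c(i)$ for the colour of $i$, so that $h_i=e_{c(i)}$, and let $G=S_{n_1}\times\cdots\times S_{n_k}$ be the colour-preserving permutations. A word $\xi$ corresponds to a colour sequence $c\circ\rho$. Its lift set $\operatorname{Lift}(\xi)$ consists of the $\widetilde h_{\sigma}$ with $c\circ\sigma=c\circ\rho$, that is, $\sigma\in\rho G$ with the appropriate side convention. Thus $|\operatorname{Lift}(\xi)|=|G|=\prod_c n_c!$ for every $\xi$.

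\textbf{Step 1: expand both sides.} The left-hand side is
\[
|G|^{-1}\sum_{\sigma\in\operatorname{Lift}\xi}\ \sum_{\sigma'\in\operatorname{Lift}\eta}\langle\widetilde h_\sigma,\widetilde h_{\sigma'}\rangle_q .
\]
Since the $\widetilde h_i$ are distinct orthonormal vectors, each term equals $q^{\inv(\pi)}$, where $\pi$ is the unique permutation matching positions of $\widetilde h_\sigma$ with positions of $\widetilde h_{\sigma'}$. This is the same one-nonzero-term observation used in the proof of Lemma~\ref{lem:distinct-ordered-swap}.

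The right-hand side is
\[
\sum_{\pi\in S_n}q^{\inv(\pi)}\prod_i\langle\xi_i,\eta_{\pi(i)}\rangle ,
\]
that is, the sum of $q^{\inv(\pi)}$ over the permutations $\pi$ that map positions of $\xi$ to positions of $\eta$ carrying the same colour.

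\textbf{Step 2: match the two sums.} I would show that each colour-compatible $\pi$ arises from exactly $|G|$ pairs $(\sigma,\sigma')$. Fix a lift $\sigma$ of $\xi$; there are $|G|$ choices. Given $\pi$, the lift $\sigma'$ is then forced, since it must be the relabelling of $\widetilde h_\sigma$ by $\pi$. This $\sigma'$ is indeed a lift of $\eta$ precisely because $\pi$ preserves colours. Conversely, every pair $(\sigma,\sigma')$ determines a colour-compatible $\pi$.

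Hence the double sum equals $|G|$ times the right-hand side, and the factor $|G|^{-1}$, coming from $|\operatorname{Lift}\xi|^{-1/2}\,|\operatorname{Lift}\eta|^{-1/2}$, cancels it exactly.

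\textbf{Main obstacle.} The delicate part is the bookkeeping in Step 2. One has to fix the convention for composing $\sigma$ and $\pi$ so that the matching permutation, and hence $\inv(\pi)$, is identical in both sums. I would handle this by working entirely with position maps: send position $i$ of the first word to the position in the second word holding the same letter, and take $\inv$ of that map on both sides.

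Once inner products are preserved, linear extension and injectivity follow as described above.
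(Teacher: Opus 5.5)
Your proposal is correct and follows essentially the same route as the paper. Both fix one lift of $\xi$ and observe that the lifts of $\eta$ are in bijection with the colour-compatible matchings $\pi$ (the paper's $\operatorname{Match}(w,v)$). Each of the $|\operatorname{Lift}(\xi)|$ outer terms therefore contributes exactly $\langle\xi,\eta\rangle_q$, and the normalization cancels.

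The only difference is cosmetic. The paper simply notes that the distinct words $\xi_w$ form a basis of $K(\xi_0)$, whereas you derive well-definedness of the linear extension from preservation of inner products; that is equally valid.
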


\begin{proof}
Let \(\mathcal W\) be the set of words
\(w=(g_1,\ldots,g_n)\) obtained by permuting
\(h_1,\ldots,h_n\), and write
\(\xi_w=g_1\otimes\cdots\otimes g_n\). The vectors
\(\{\xi_w:w\in\mathcal W\}\) form a basis of \(K(\xi_0)\).

Notice that \(\operatorname{Lift}(\xi_w)\) and
\(\operatorname{Lift}(\xi_v)\) have the same cardinality for any
\(w,v\in\mathcal W\); denote this cardinality by \(r\).

For \(w=(g_1,\ldots,g_n)\) and
\(v=(g'_1,\ldots,g'_n)\) in \(\mathcal W\), set
\[
\operatorname{Match}(w,v)
=
\{\pi\in S_n:g_k=g'_{\pi(k)}\text{ for every }k\}.
\]
Note that
\[
\langle\xi_w,\xi_v\rangle_q
=
\sum_{\pi\in\operatorname{Match}(w,v)}
q^{\operatorname{inv}(\pi)}.
\]

Observe that, for fixed
\(\eta\in\operatorname{Lift}(\xi_w)\), matching the distinct labels
as \(\zeta\) ranges over \(\operatorname{Lift}(\xi_v)\) gives a
bijection with \(\operatorname{Match}(w,v)\). Hence
\[
\sum_{\zeta\in\operatorname{Lift}(\xi_v)}
\langle\eta,\zeta\rangle_q
=
\sum_{\pi\in\operatorname{Match}(w,v)}
q^{\operatorname{inv}(\pi)}
=
\langle\xi_w,\xi_v\rangle_q.
\]

Therefore,
\[
\langle \iota\xi_w,\iota\xi_v\rangle_q
=\frac1r\sum_{\eta\in\operatorname{Lift}(\xi_w)}
\sum_{\zeta\in\operatorname{Lift}(\xi_v)}
\langle\eta,\zeta\rangle_q
=\frac1r\sum_{\eta\in\operatorname{Lift}(\xi_w)}
\langle\xi_w,\xi_v\rangle_q
=\langle\xi_w,\xi_v\rangle_q.
\]
Thus \(\iota\) extends linearly to an isometric embedding from
\(K(\xi_0)\) into \(\mathcal F_q(\ell^2(\mathbb N))\).
\end{proof}

\begin{proof}[Proof of Proposition~\ref{prop:ordered-swap-estimate}]
Choose an orthonormal basis \(B\) of \(\{h_1,h_2\}^\perp\). By density, it suffices
to prove the estimate when \(\xi\in K_+\) is a finite linear
combination of tensors formed from \(h_1,h_2\) and elements of \(B\).

Collecting together the terms having the same multiset of
\(B\)-letters, write \(\xi=\xi_1+\cdots+\xi_n\). Thus, for each
\(j\), there are \(f_1,\ldots,f_m\in B\), repeated according to
multiplicity, such that
\[
 \xi_j\in
 K(h_1\otimes h_2\otimes f_1\otimes\cdots\otimes f_m),
\]
and different values of \(j\) correspond to different multisets.

Fix \(j\), and let \(\iota\) be the isometric embedding supplied by
Lemma~\ref{lem:lift-embedding}. Let \(\widetilde S\) interchange the labelled copies of
\(h_1,h_2\) and fix the remaining labels. By construction,
\(\iota S=\widetilde S\iota\). Lemma~\ref{lem:distinct-ordered-swap} therefore gives
\[
 \frac{\langle S\xi_j,\xi_j\rangle_q}{\|\xi_j\|_q^2}
 =
 \frac{\langle\widetilde S\iota\xi_j,\iota\xi_j\rangle_q}
      {\|\iota\xi_j\|_q^2}
 \in q[-q^2,1].
\]

Notice that, for \(j\neq k\),
\(\langle\xi_j,\xi_k\rangle_q
=\langle S\xi_j,\xi_k\rangle_q=0\). It follows that
\[
 \frac{\langle S\xi,\xi\rangle_q}{\|\xi\|_q^2}
 =
 \sum_{j=1}^n
 \frac{\|\xi_j\|_q^2}{\|\xi\|_q^2}
 \frac{\langle S\xi_j,\xi_j\rangle_q}{\|\xi_j\|_q^2}
 \in q[-q^2,1].
\]
\end{proof}

\printbibliography

\end{document}